\let\mathcal\mathscr
\def\llra{\hbox to 10mm{\rightarrowfill}}
\def\lllra{\hbox to 15mm{\rightarrowfill}}
\def\PB{{\widehat B}}
\def\phi{{\varphi}}
\def\cI{\mathcal{I}}
\def\cA{\mathcal{A}}
\def\cF{\mathcal{F}}
\def\cL{\mathcal{L}}
\def\cO{\mathcal{O}}
\def\cP{\mathcal{P}}
\def\cJ{\mathcal{J}}
\def\cQ{\mathcal{Q}}
\def\cV{\mathcal{V}}
\DeclareMathOperator{\codim}{codim}
\DeclareMathOperator{\Pic}{Pic}
\DeclareMathOperator{\vol}{vol}
\newtheorem{lemm}{Lemma}[section]
\newtheorem{theo}[lemm]{Theorem}
\newtheorem{coro}[lemm]{Corollary}
\newtheorem{prop}[lemm]{Proposition}
\newtheorem*{conj*}{Conjecture}
\theoremstyle{definition}
\newtheorem{rema}[lemm]{Remark}
\theoremstyle{remark}
\newtheorem*{remark*}{Remark}
\newtheorem*{note*}{Note}
\theoremstyle{remark}
\begin{document}
\title[Singularities of divisors]{Singularities of divisors of small degrees on abelian varieties}
\author{Zhi Jiang}
\address{Zhi Jiang, Shanghai Center for Mathematical Sciences, China}
\email{zhijiang@fudan.edu.cn}
\author{Hui Liu}
\address{Hui Liu, Shanghai Center for Mathematical Sciences, China}
\email{22110180027@m.fudan.edu.cn}

\keywords{Polarized abelian varieties, Singularities of pairs, Generic vanishing}

%% Mathematical classification (2010)
\subjclass{14J17, 14K05, 14K12}

%% Abstract should be placed before \maketitle (and, in fact, before
%% \begin{document} is best)
\begin{abstract}
Pareschi showed that  an ample divisor of degree $d$ on a simple abelian variety of dimension $g$ has mild singularities when $d<g$. We extend his result to general polarized abelian varieties.
 We also show that ample divisors of degree  $3$ or $4$ on an abelian variety have mild singularities.
\end{abstract}
\maketitle
\section{Introduction}
Given a polarized smooth projective variety $(X, L)$, it is usually important to understand the singularities of $(X, \frac{1}{m}D)$, where $D\in |mL|$ for each $m\geq 1$.

For $D\in |mL|$,   we denote by $\cJ(\frac{1}{m}D)=\mathcal J(X, \frac{1}{m}D)$  the multiplier ideal of $(X, D)$. We also denote by $\cJ^+(\frac{1}{m}D)$  the multiplier ideal $\mathcal J(X, \frac{1-\epsilon}{m}D)$ for $0<\epsilon\ll 1$. It is well-known that $\cJ(\frac{1}{m}D)=\cO_X$ iff $(X, \frac{1}{m}D)$ is log terminal and $\cJ^+(\frac{1}{m}D)=\cO_X$ iff $(X, \frac{1}{m}D)$ is log canonical (see \cite[9.3.9]{Laz2}).  For a reduced divisor $D\in |L|$, we have another measure of singularities, the adjoint ideal $\mathrm{adj}(D)$ defined by Ein and Lazarsfeld. When $\mathrm{adj}(D)$ is trivial, $D$ is normal with rational singularities. Note that this implies that $(X, D)$ is log canonical by adjunction.

When $(A, \Theta)$ is a principally polarized abelian variety, there are   results due to Koll\'ar, Ein-Lazarsfeld, and Hacon describing singularities of $(A, \Theta)$. We summarize these results.
 \begin{itemize}
\item Koll\'ar proved in \cite{Kol2} that $(A, \Theta)$ is log canonical. As a consequence, the closed subset $$\Sigma_k(\Theta):=\{x\in A\mid \mathrm{mult}_x\Theta\geq k\}$$ has codimension $\geq k$ in $A$ for each $k\geq 1$.
\item Ein-Lazarsfeld proved in \cite{EL} that if $\Theta$ is irreducible, it is normal with rational singularities and $(A, \frac{1}{m}D)$ is log canonical for any $D\in |m\Theta|$. Thus $\Sigma_k(\Theta)$ has codimension $k$ for some $k\geq 1$ iff $(A, \Theta)$ splits as a $k$-fold product of PPAVs.
\item Finally, Hacon proved that for any $D\in |m\Theta|$, if $\lfloor \frac{1}{m}D\rfloor=0$, $(A, \frac{1}{m}D)$ is log terminal (see \cite{H1}). Similarly,  $\Sigma_{mk}(D)$ has codimension $k$ and if equality holds,   $(A, \Theta)$ splits as a $k$-fold product of PPAVs.
\end{itemize}

Hacon (\cite{H2}) and Debarre and Hacon (\cite{DH}) later realised that it is possible to extend these results to polarized abelian varieties with small degrees. The main result of \cite{DH} states that, for a polarized abelian variety $(A, L)$ of dimension $>2$ and of degree $2$,  $(A, \frac{1}{m}D)$ is log terminal for any divisor $D\in |mL|$, unless $D=mE$ for some effective divisor $E$. Moreover if   $D\in |L|$ is a prime divisor, $D$ is normal with rational singularities.

Debarre and Hacon  conjectured that similar results hold for $(A, L)$ when $\deg L:=h^0(A, L)<g:=\dim A$. This was partially confirmed by Pareschi in \cite{P}: when $(A, L)$ is a simple polarized abelian variety with $g>\deg L$, every divisor $D\in |L|$ is normal with rational singularities and $(A, \frac{1}{m}D)$ is log terminal for $D\in |mL|$ unless $D=mE$.

The following theorem  refines Pareschi's theorem.

 \begin{theo}\label{general-lc} Let $(A, L)$ be a  polarized abelian variety of dimension $g$ and degree $d$. There exists a  constant $M$ depending only on $g$ and $d$ such that
  \begin{itemize}
  \item[(A)] if $g\geq d$ and there exists an effective boundary $\mathbb Q$-divisor $D\sim_{\mathbb Q}L$ such that $(A, D)$ is not log canonical, there exits an abelian subvariety $K$ of dimension $\geq g-d+1$ such  that $h^0(L|_K)\leq M$;
  \item[(B)] if $g>d$ and there exists an effective $\mathbb Q$-divisor $D\sim_{\mathbb Q}L$ with $\lfloor D\rfloor=0$ such that $(A, D)$ is not Kawamata log terminal or there exists a prime divisor $D\sim L$ such that $D$ does not have rational singularities, there exits an abelian subvariety $K$ of dimension $\geq g-d$ such  that $h^0(L|_K)\leq M$.
  \end{itemize}
 \end{theo}
Indeed, Pareschi's theorem implies  divisors of small degrees in  very general abelian varieties of the corresponding moduli space have mild singularities  and our result shows that the same conclusion holds for general abelian varieties.

We also study divisors of degree $3$ and $4$ in details.  

\begin{theo}Let $(A, L)$ be an indecomposable polarized abelian variety of dimension $g\geq  h^0(A, L)=3$. For any effective boundary $\mathbb Q$-divisor $D\sim_{\mathbb Q}L$, $(A, D)$ is log canonical.

Moreover, if $g>3$ \begin{itemize}\item every prime divisor $D\in |L|$ is normal with rational singularities;
 \item for every $D\in |mL|$, as soon as $\lfloor \frac{1}{m}D\rfloor=0$ and each component of $D$ is ample, $(A, \frac{1}{m}D)$ is Kawamata log terminal.
 \end{itemize}
\end{theo}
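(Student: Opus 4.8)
The plan is to start from Theorem~\ref{general-lc} and then exploit that $h^0(A,L)=3$ is prime, so that $L$ has type $(1,\dots,1,3)$ and the finite group $K(L)=\Ker(\phi_L\colon A\to\widehat A)$ is isomorphic to $(\Z/3)^2$. For the first assertion, suppose $(A,D)$ is not log canonical for some effective boundary $\mathbb{Q}$-divisor $D\sim_{\mathbb Q}L$. By Theorem~\ref{general-lc}(A) there is an abelian subvariety $K\subseteq A$ with $\dim K\ge g-2$ and $h^0(L|_K)\le M$. Since $h^0(L|_A)=3$ the case $K=A$ conveys nothing, so the first step is to arrange that $K$ is proper and nontrivial — either by localizing the non-log canonical centre on the fibres of a nontrivial quotient of $A$, as in the proof of Theorem~\ref{general-lc}, or directly from the fact that $|L|$ is only two-dimensional. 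Then the complementary abelian subvariety $K'$ of $K$ in $(A,L)$ has $\dim K'\le 2$.

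Next I would invoke the standard facts about complementary abelian subvarieties: the addition map $\mu\colon K\times K'\to A$ is an isogeny with $\Ker\mu\cong K\cap K'$ and $\mu^{*}L\cong p_1^{*}(L|_K)\otimes p_2^{*}(L|_{K'})$, so that
\[
h^0(L|_K)\cdot h^0(L|_{K'})=|K\cap K'|\cdot h^0(A,L)=3\,|K\cap K'|,
\]
while $\Ker\mu\subseteq K(\mu^{*}L)=K(L|_K)\times K(L|_{K'})$ forces $K\cap K'\hookrightarrow K(L|_K)$, hence $|K\cap K'|\le h^0(L|_K)^2$. The relevance of indecomposability is that $(A,L)$ is indecomposable if and only if $K\cap K'\ne 0$ for every proper nontrivial abelian subvariety $K$: when $K\cap K'=0$ the map $\mu$ is an isomorphism identifying $(A,L)$ with $(K,L|_K)\times(K',L|_{K'})$. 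In particular $L|_K$ is never principal for proper $K$, so here $h^0(L|_K)\ge 2$.

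It remains to exclude a proper $K$ with $2\le h^0(L|_K)\le M$ and $\dim K'\le 2$. For such a $K$ the failure of log canonicity of $(A,D)$ propagates, along the fibration $\pi\colon A\to B:=A/K$, to a non-log canonical effective $\mathbb{Q}$-boundary numerically equivalent to $L|_K$ on a general fibre $K_b\cong K$, which is a polarization of degree $\le M$ on a variety of dimension $\ge g-2$. When $g$ is large compared with $M$ — the finitely many remaining values of $g$ being treated directly — the dimension of $K$ exceeds this degree, so by a Pareschi-type argument, or by induction on dimension, this can occur only when $(K,L|_K)$ is decomposable; recursing on its factors one reduces to a piece of bounded dimension or of degree $\le 2$, which is handled by the results of Debarre--Hacon, Ein--Lazarsfeld and Hacon and gives a contradiction. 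Tracing the decompositions back through the displayed identity and $K\cap K'\ne 0$ contradicts the indecomposability of $(A,L)$, so $(A,D)$ is log canonical.

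For the last two assertions, where $g>3$, the same scheme runs with Theorem~\ref{general-lc}(B) replacing (A): a prime divisor $D\in|L|$ without rational singularities, or an effective $\mathbb{Q}$-divisor $D\sim_{\mathbb Q}L$ with $\lfloor D\rfloor=0$ and $(A,D)$ not Kawamata log terminal, produces a proper $K$ with $\dim K\ge g-3$, hence $\dim K'\le 3$; the hypothesis that every component of $D$ be ample is exactly what forbids $D$ from having a component pulled back from a proper quotient of $A$ — the analogue of the exceptional case $D=mE$ in the degree-$2$ theorem — and the complementary-subvariety bookkeeping, now with $h^0(L|_{K'})$ controlled by $\dim K'\le 3$, again leaves only finitely many configurations. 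The main obstacle is this last reduction: Theorem~\ref{general-lc} supplies only the existence of such a fibration together with coarse numerical bounds, so to close the residual configurations one must re-enter the generic-vanishing and multiplier-ideal machinery, pin down how $D$ meets the fibres of $A\to B$, control the discriminant contribution on the low-dimensional base, and prevent the low-degree sub-problem on $(K,L|_K)$ from reintroducing a bad singularity — all while keeping, in the third assertion, the ampleness-of-components hypothesis in force.
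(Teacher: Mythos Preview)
Your strategy of invoking Theorem~\ref{general-lc} as a black box cannot close, and you essentially concede this at the end. The concrete obstruction is that the constant $M$ in Theorem~\ref{general-lc} depends on $g$ (its proof produces bounds of order $g!d$), so the sentence ``when $g$ is large compared with $M$'' is circular: as $g$ grows, so does $M$. Consequently the proposed induction on dimension---passing to $(K,L|_K)$ with $\dim K\ge g-2$ but $h^0(L|_K)\le M(g,3)$---never reaches a regime where the degree of the restricted polarization is dominated by the dimension, and no Pareschi-type statement applies. The vague ``finitely many remaining values of $g$ being treated directly'' is therefore not a finite residue at all.

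The paper's argument is entirely different and much sharper. For the first assertion it does not touch Theorem~\ref{general-lc}: by Proposition~\ref{set-up} the sheaf $L\otimes\cJ^+_D$ is IT$^0$, and Corollary~\ref{g>d} forces $2\le\chi(L|_Z)\le d-2=1$, an immediate contradiction. For the second assertion the paper again avoids the coarse bound $M$. Instead it uses $\chi(L\otimes\cI_Z)=1$ exactly, runs the incidence-variety argument $J\subset A\times|L|$ (so $p\colon J\to A$ is birational), shows $q\colon J\to|L|$ is surjective, and then Lemma~\ref{fibred} applied to $J_s+Z\subset D_s$ forces $Z$ to be a translate of an abelian subvariety $K$ of dimension exactly $g-2$ with $h^0(L|_K)=2$ exactly. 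The endgame is then a concrete intersection computation on the Poincar\'e-complement surface $B$: pulling back along the $(\Z/2)^2$-isogeny $K\times B\to A$, the four points of $K\cap B$ are log canonical centres of the restriction of $D$ to two general fibres $\{x\}\times B$, $\{y\}\times B$, giving local multiplicity $\ge4$ at each, hence $12=(L|_B)^2\ge 16$. This last step---local intersection multiplicity at lc centres on a surface---is the genuine content that your complementary-subvariety bookkeeping never reaches; the ampleness hypothesis in the third bullet enters precisely here, to guarantee the two slices $D|_{\{x\}\times B}$ and $D|_{\{y\}\times B}$ share no component.
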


\begin{theo} Let $(A, L)$ be an indecomposable polarized abelian variety of dimension $g\geq h^0(L)=4$.

 For any effective boundary $\mathbb Q$-divisor $D\sim_{\mathbb Q}L$ such that every irreducible component of $D$ is ample, $(A, D)$ is log canonical.

\end{theo}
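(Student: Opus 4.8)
The plan is to argue by contradiction: suppose there is an effective boundary $\mathbb{Q}$-divisor $D\sim_{\mathbb{Q}}L$ with every component ample and with $(A,D)$ not log canonical, and derive a contradiction by reducing, via Theorem~\ref{general-lc}, to a low‑degree non–log–canonical configuration on an abelian subvariety that is ruled out by the results already available. First I would put the pair in standard form. Since $L$ is ample, a standard perturbation (tie–breaking, as in Koll\'ar's proof of log canonicity of theta divisors) lets us replace $D$ by an effective $\mathbb{Q}$-divisor $\mathbb{Q}$-linearly equivalent to $L$ such that $(A,D)$ has a \emph{unique} non–Kawamata–log–terminal centre $Z$, which is then an exceptional log canonical centre, hence normal, and such that $(A,(1-\epsilon)D)$ is klt for $0<\epsilon\ll1$ with $\cJ((1-\epsilon)D)$ cosupported exactly on $Z$. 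As $L-(1-\epsilon)D\sim_{\mathbb{Q}}\epsilon L$ is ample, Nadel vanishing shows $\cJ((1-\epsilon)D)\ot L\ot\alpha$ and $\cO_Z\ot L\ot\alpha$ are $\mathrm{IT}_0$ for every $\alpha\in\PA$; hence, with $c:=\chi(\cO_Z\ot L)=h^0(\cO_Z\ot L)$, we get $c\ge1$ (and $c\ge\dim Z+1$ when $\dim Z\ge1$, since $L|_Z$ is ample) and
\[
h^0\!\big(A,\cJ((1-\epsilon)D)\ot L\ot\alpha\big)=h^0(A,L)-c=4-c\le3\qquad(\alpha\in\PA).
\]

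Next I would pin down the geometry of $Z$. Applying Theorem~\ref{general-lc}(A), which is legitimate since $g\ge d=4$, there is a non–zero abelian subvariety $K\subseteq A$ with $\dim K\ge g-3$ and $h^0(L|_K)$ bounded; more precisely, extracting the generic–vanishing description of cohomological support loci from its proof, $Z$ is — after a translation — stable under $K$, i.e.\ $Z=q^{-1}(Z_B)$ for the quotient $q\colon A\to B:=A/K$ with $Z_B\subsetneq B$ and $\cJ((1-\epsilon)D)=q^{*}\cJ_B$, while $\dim B=g-\dim K\le3$ and, refining the bound above using the $\mathrm{IT}_0$ property of $q_{*}L$, $h^0(L|_K)\le h^0(A,L)=4$. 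Restricting to a general fibre $F\cong K$, inversion of adjunction applied to $q$ shows $(F,D|_F)$ is not klt; here $D|_F\sim_{\mathbb{Q}}L|_F$ is an effective $\mathbb{Q}$-divisor, and — this is where ampleness of the components of $D$ enters — no component of $D$ is $q$-vertical (a $q$-vertical prime divisor equals $q^{-1}(W)$ for a prime divisor $W\subset B$, hence meets a general fibre in no curve and is not ample), so every component of $D|_F$ is again ample, restrictions of ample classes to abelian subvarieties staying ample.

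Finally I would conclude by descending induction on $\dim A$. Passing to an indecomposable factor of $(F,L|_F)$ carrying the non–klt locus, we reach an indecomposable polarized abelian variety of degree $e:=h^0(L|_F)\in\{1,2,3,4\}$, of dimension $\ge g-3$, with a non–klt effective boundary $D|_F$ with ample components. If $e=1$ this contradicts Koll\'ar's theorem and Ein–Lazarsfeld's theorem (on an indecomposable ppav every effective $\mathbb{Q}$-divisor $\mathbb{Q}$-equivalent to $\Theta$ gives a log canonical pair, in any dimension); if $e=2$ the Debarre–Hacon theorem applies once $\dim F\ge3$, the excluded possibility $D|_F=mE$ being impossible because $D|_F$ is a boundary with ample components; if $e=3$ it contradicts the previous theorem once $\dim F\ge3$; and the finitely many residual cases with $\dim F\le2$, which force $g$ small, are dealt with directly from the classification of polarizations of degree $\le 3$ on abelian surfaces, again using ampleness. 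If $e=4$ I would bring in the abelian subvariety $K'\subseteq A$ complementary to $K$: then $K\times K'\to A$ is an isogeny $\mu$ and $c_1(L)$ restricts additively, so $h^0(L|_K)\,h^0(L|_{K'})=4\deg\mu$, i.e.\ $h^0(L|_{K'})=\deg\mu$ with $\dim K'=\dim B\le3$; if $\deg\mu=1$ then $A\simeq K\times K'$ carries the product polarization, contradicting indecomposability, and if $\deg\mu\ge2$ one re–runs the whole argument inside $(F,L|_F)$, whose dimension has strictly dropped. The induction terminates, yielding the contradiction.

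The step I expect to be the main obstacle is the passage from Theorem~\ref{general-lc}(A) to an actual non–klt pair on the fibre $F$: establishing the $K$-invariance (fibration) structure of $Z$ together with the numerical bound $h^0(L|_K)\le4$, verifying via inversion of adjunction that $(F,D|_F)$ is genuinely not klt (the delicate point being the behaviour near the non–klt locus), and checking that ampleness of the components of $D$ is preserved under restriction and does exclude every degenerate configuration. The ensuing case analysis, although finite, is a secondary and more routine source of work, through the explicit small–dimensional classification.
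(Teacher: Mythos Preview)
Your proposal has genuine gaps at the two places you yourself flag as delicate, and it diverges substantially from the paper's argument.

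\textbf{The extraction step fails.} Theorem~\ref{general-lc}(A) only produces an abelian subvariety $K$ with $\dim K\ge g-d+1$ and $h^0(L|_K)\le M$ for some constant $M$ depending on $g$ and $d$; neither the $K$-invariance of $Z$ nor the sharp bound $h^0(L|_K)\le 4$ is in that statement or recoverable from its proof. In the proof of (A), $K$ arises as the abelian subvariety generated by a fibre $Z_t$ of an auxiliary addition map, and what is $K$-fibred is the image $V_s=J_s^i+Z$, not $Z$ itself. The bound on $h^0(L|_K)$ there comes from a chain of Severi-type and Hodge inequalities and is far from $4$. So your reduction to a quotient $A\to B$ with $\dim B\le 3$ and the ensuing inversion-of-adjunction step are not justified.

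\textbf{The restriction/induction step also fails.} Even granting the fibration, your perturbation turns the original non-lc pair into an lc-but-not-klt pair $D'\sim_{\mathbb Q}L$, so Theorem~\ref{general-lc}(A) no longer applies to $D'$, while it is $D'$ whose non-klt locus you claim is $K$-invariant. Moreover, ampleness of a prime component $D_i$ of $D$ only guarantees that $D_i|_F$ has ample numerical class; its irreducible components need not be ample (already on a product of elliptic curves one sees this), so the hypothesis needed to feed the degree $2$, $3$, $4$ theorems on $F$ is not preserved.

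\textbf{What the paper does instead.} The paper never invokes Theorem~\ref{general-lc}. It first proves (Theorem~\ref{lc-d=4}), by a direct incidence-variety argument specific to $d=4$, that $\cJ^+_D=\cI_K$ for a translate of an abelian subvariety $K$ of dimension $g-2$ with $\chi(L|_K)=2$; the key points are Proposition~\ref{pareschi} forcing $Z$ to be of pure codimension $2$, and Lemma~\ref{fibred} applied to $J_s+Z\subset D_s$ forcing $Z$ to be a torus translate. Then (Corollary~\ref{lc-d=4andample}) it passes to the Poincar\'e complement \emph{surface} $B$ of $K$, not to $K$: on the isogeny $\mu:K\times B\to A$ one has $\cJ^+_{\mu^*D}=\cI_{K\times G}$ with $G=K\cap B$, and restricting $\mu^*D$ to two general slices $\{t_1\}\times B$ and $\{t_2\}\times B$ (here ampleness of the components of $D$ is used only to ensure these restrictions have no common component) gives effective $\mathbb Q$-divisors $D_1,D_2$ on $B$ with each $b\in G$ a non-lc centre, hence local multiplicity $(D_1\cdot D_2)_b>4$; summing yields $4|G|=(L|_B^2)>\!4|G|$, a contradiction.
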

Some results similar  to Theorem 3 and 4 were also obtained  in the thesis of Flavio Blondeau \cite{B}. 
 \subsection*{Acknowledgements}
 The first author thanks Giuseppe Pareschi for 
 The first author is a member of the Key Laboratory of Mathematics for Nonlinear Science, Fudan University and he is supported  by the Foundation for Innovative Research Groups of the Natural Science Foundation of China (No.~12121001), by the National Key Research and Development Program of China (No.~2020YFA0713200), and by the Natural Science Foundation of Shanghai \\ (No.21ZR1404500).

\section{Multiplier ideals, singularities, and adjunction}
We recall the definitions of various singularities, multiplier ideals and some useful facts.
\subsection{Singularities of pairs}
Throughout this paper, we consider pairs $(X, D)$ where $X$ is a smooth projective varieties over an algebraically closed field of characteristic $0$ and  $D=\sum_ic_iD_i$ is an effective $\mathbb Q$-divisors on $X$, where $D_i$ are prime divisors and $c_i\in\mathbb Q$. We say that $D$ is a boundary divisor if $0\leq c_i\leq 1$. We say that two $\mathbb Q$-divisors $D$ and $D'$ are $\mathbb Q$-equivalent if there exists a sufficiently divisible integer $M$ such that $MD\sim MD'$, i.e. $MD$ and $MD'$ are linearly equivalent.

Let $\mu: X'\rightarrow X$ be a log resolution of $(X, D)$ and write $$K_{X'}=\mu^*(K_X+D)+\sum_ia_iE_i.$$ We say that $(X, D)$ is log canonical (resp. Kawamata log terminal) at $x\in X$ iff $a_i\geq -1$ (resp. $a_i>-1$) for all $E_i$ such that $x\in \mu(E_i)$. The multiplier ideal $\cJ(X, D)$ is by definition $\mu_* \cO_{X'} (K_{X'/X}-\lfloor \mu^*D\rfloor)=\mu_* \cO_{X'}(\lceil \sum_ia_iE_i\rceil)$ defining the non-klt locus $\mathrm{Nklt}(X, D)$ of $(X, D)$. One sees that $(X, D)$ is Kawamata log terminal iff $\cJ(X, D)=\cO_X$ and $(X, D)$ is log canonical iff $\cJ(X, (1-\epsilon)D)=\cO_X$ for $0<\epsilon\ll 1$. When $D$ is a prime divisor, we say that $(X, D)$ is  canonical  if $a_i\geq 0$  for all $E_i$ above $X$ which is $\mu$-exceptional.

Given a pair as above, we define the local log canonical threshold at $x\in X$, $$\mathrm{lct}_x(X, D):=\max\{c\in \mathbb Q\mid (A, cD)\; is\; log\; canonical\; at\; x\}$$ and the global log canonical threshold, $$\mathrm{lct}(X, D):=\max\{c\in \mathbb Q\mid (A, cD)\; is\; log\; canonical\}.$$
\subsection{Log canonical centers and adjunctions}
If $(X, D)$ is log canonical at a generic point of $\mu(E_i)$ and $a_i=-1$, we call $\mu(E_i)$ a log canonical center of $(X, D)$. When $(X, D)$ is log canonical at $x$, Kawamata showed that there exists a minimal log canonical center $W$ though $x$, which has rational singularities around $x$ (see \cite{Kaw}). We also have important (sub)adjunctions for log canonical centers. The following version of adjunction for log canonical centers is due to Xiaodong Jiang \cite[Proposition 5.1]{Jx}. %Fujino and Hashizume \cite{FH}. Note that the original theorem of Fujino and Hashizumi deals with $\mathbb R$-pairs and is stated with the language of b-divisors and we only need a simple version.
\begin{theo}\label{adjunction} Let $W$ be a log canonical center of $(X, D)$ and let $\nu: V\rightarrow W$ be the normalization.  Then there exists  an effective $\mathbb Q$-divisor $\mathrm B_V$ on $V$ such that  $$\nu^*(K_X+D)\sim_{\mathbb Q}\mathrm K_V+\mathrm B_V.$$
\end{theo}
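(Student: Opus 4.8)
The plan is to reduce the statement to iterated codimension‑one adjunction on a $\mathbb{Q}$-factorial dlt modification, together with the canonical bundle formula for lc‑trivial fibrations.

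\emph{Step 1 (reduction to a dlt model).} Since $W$ is a log canonical center, it is the centre on $X$ of a divisorial valuation $E$ with $a_E(X,D)=-1$. Take a $\mathbb{Q}$-factorial dlt modification $f\colon(Y,B)\to(X,D)$: thus $(Y,B)$ is dlt, $K_Y+B=f^{*}(K_X+D)$, and every $f$-exceptional prime divisor appears in $\lfloor B\rfloor$; such an $f$ exists in characteristic $0$ by the minimal model program. Because $a_E(Y,B)=a_E(X,D)=-1$ and $(Y,B)$ is dlt, the centre of $E$ on $Y$ is a stratum of $(Y,B)$ — a component of an intersection of prime components of $\lfloor B\rfloor$ — and its image under $f$ is $W$. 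Choose an lc center $W_Y$ of $(Y,B)$ with $f(W_Y)=W$ of minimal dimension. Since a canonical divisor is compatible with birational push‑forward between normal varieties and push‑forward preserves effectivity, it suffices to construct an effective $\mathbb{Q}$-divisor $B_{W_Y}$ on $W_Y$ with $K_{W_Y}+B_{W_Y}\sim_{\mathbb Q}f^{*}(K_X+D)|_{W_Y}$; applying $(f|_{W_Y})_{*}$ then yields the theorem on $V$ (a dlt stratum is normal, so $W_Y$ dominates $V$).

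\emph{Step 2 (iterated adjunction).} Write $W_Y$ as a component of $S_1\cap\cdots\cap S_c$ with $S_1,\dots,S_c$ distinct prime components of $\lfloor B\rfloor$. By the adjunction formula for the different, $\mathrm{Diff}_{S_1}(B-S_1)$ is an \emph{effective} $\mathbb{Q}$-divisor, the pair $(S_1,\mathrm{Diff}_{S_1}(B-S_1))$ is again dlt with $W_Y$ among its strata, and $K_{S_1}+\mathrm{Diff}_{S_1}(B-S_1)\sim_{\mathbb Q}(K_Y+B)|_{S_1}$. Iterating this $c$ times along $Y\supset S_1\supset S_1\cap S_2\supset\cdots\supset W_Y$ produces an effective $\mathbb{Q}$-divisor $B_{W_Y}$ with $K_{W_Y}+B_{W_Y}\sim_{\mathbb Q}(K_Y+B)|_{W_Y}=f^{*}(K_X+D)|_{W_Y}$. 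If $f|_{W_Y}\colon W_Y\to W$ is generically finite it is birational — otherwise $W_Y$ would lie in the exceptional locus of $f$ and have image of strictly smaller dimension — and Step 1 finishes the proof.

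\emph{Step 3 (the fibration case — the main obstacle).} The remaining case is $\dim W_Y>\dim W$, so that $g:=f|_{W_Y}\colon W_Y\to W$ is a genuine fibration; by minimality of $W_Y$ no lc center of $(W_Y,B_{W_Y})$ dominates $W$, and restricting the relation of Step 2 to a general fibre $F$ exhibits $(F,B_{W_Y}|_F)$ as log canonical with $K_F+B_{W_Y}|_F\sim_{\mathbb Q}0$; thus, after a Stein factorisation and a birational modification of the base (routine), $g$ is an lc‑trivial fibration over a model $V'$ of $V$. The canonical bundle formula of Ambro and Fujino--Gongyo then gives $(\nu')^{*}(K_X+D)\sim_{\mathbb Q}K_{V'}+\mathrm B^{\mathrm{div}}_{V'}+\mathrm M_{V'}$ with $\mathrm B^{\mathrm{div}}_{V'}\geq 0$ and $\mathrm M_{V'}$ the moduli part. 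The only non‑formal point is that $\mathrm B^{\mathrm{div}}_{V'}+\mathrm M_{V'}$ is $\mathbb{Q}$-linearly equivalent to an effective divisor: in general the moduli part is merely known to be nef, and a nef class need not be $\mathbb{Q}$-effective, so one must invoke more about it — its $\mathbb{Q}$-effectivity or semiampleness in the cases at hand, or a tie‑breaking perturbation of $D$ replacing $W$ by an lc center realised on the dlt model by a stratum mapping birationally onto it, so that no moduli term survives. Controlling this moduli contribution — equivalently, pushing the adjunction all the way down to $W$ itself — is where the substance of \cite[Proposition 5.1]{Jx} lies; everything else is the standard formalism of differents and dlt modifications.
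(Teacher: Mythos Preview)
The paper does not supply a proof of this statement; it merely quotes it as \cite[Proposition~5.1]{Jx}. So there is nothing in the paper to compare your argument against.

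As for the proposal itself: Steps~1--2 are the standard dlt--different formalism and are fine in outline (the claim in Step~2 that a generically finite $f|_{W_Y}$ must be birational deserves a line of justification, but this can be arranged by choosing the stratum appropriately). The genuine gap is Step~3. You correctly isolate the only substantive issue --- that the moduli part $M_{V'}$ in the canonical bundle formula is a priori only nef, and nef does not imply $\mathbb Q$-effective --- and then you explicitly defer it, writing that ``controlling this moduli contribution \ldots\ is where the substance of \cite[Proposition~5.1]{Jx} lies.'' That is not a proof; it is a reduction to the very statement you were asked to establish. The two escape routes you mention are each real strategies but neither is carried out: the first requires (after a tie-break making the generic fibre klt rather than merely lc) Ambro's b-semiampleness of the moduli part for klt-trivial fibrations, and the second requires producing, after perturbation, a stratum of the dlt model that maps \emph{birationally} onto $W$ --- which is precisely what you assumed away at the start of Step~3. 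Until one of these is actually executed, the argument is an outline with the key step missing.
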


\subsection{Adjoint ideals}
When $D$ is a reduced divisor, we can define the adjoint ideal of $(X, D)$. Let $L=\cO_X(D)$ be the corresponding line bundle. We fix  the log resolution $\mu: X'\rightarrow X$ such that the strict transform of $D$ is smooth and write $\mu^*D=D'+F$, where $D'$ is the strict transform of $D$ and $F$ is $\mu$-exceptional. The adjoint ideal $\mathrm{adj}(D)$ is defined to be the ideal sheaf $\mu_* \cO_{X'}(K_{X'/X}-F)$. The adjoint ideal is a measure of the singularities of $D$. Indeed, by \cite[Proposition 9.3.48]{Laz2}, there exists a short exact sequence \begin{multline} \label{EL}0\rightarrow \cO_X(K_X)\xrightarrow{\cdot D}\cO_X(K_X+D)\otimes \mathrm{adj}(D)\\\rightarrow \mu_*\cO_{D'}(K_{D'})\rightarrow 0,\end{multline} and $\mathrm{adj}(D)=\cO_X$ iff $D$ is normal with rational singularities.

%Let $\mu|_{D'}: D'\xrightarrow{\pi} \overline{D}\xrightarrow{n} D$ be the Stein factorization of $\mu|_{D'}$. Thus $n$ is the normalization of $D$. We  have the conductor ideal $\cI_V:=\mathcal Hom(n_*\cO_{\overline{D}}, \cO_D)$ of $D$. Since $D$ is Cohen-Macaulay, $V\subset D$ is of pure codimension $1$ and without embedded points. It is known that the conductor subscheme $V$ is a subscheme of the adjoint subscheme $Z$.

%We apply Grothendieck duality to $\rho:=\mu|_{D'}$:
%\begin{eqnarray*}\rho_{*}\cO_{D'}(K_{D'})=\mathbb R \rho_*\mathcal Hom(\cO_{D'}, K_{D'})\simeq \mathbb R\mathcal Hom(\mathbb R\rho_*\cO_{D'}, L|_D).
%\end{eqnarray*}
%By considering the spectral sequence $$E_2^{i, j}=\mathcal Ext^i(R^j\rho_*\cO_X, L|_D)\Longrightarrow R^{i-j}\rho_*\omega_X$$ and the fact that $\rho_*\cO_X=n_*\cO_{\overline{D}}$,
%we deduce that \begin{eqnarray*}\rho_*\omega_X=L|_D\otimes_{\cO_D} \cI_{Z}\subset L|_D\otimes\cI_V.\end{eqnarray*}

Note that when $D$ is a prime divisor, since $D$ is a divisor of a smooth variety, $D$ has rational singularities iff it has canonical or Kawamata log terminal singularities (see for instance \cite[Theorem 11.1]{Kol1}). Moreover, by \cite[Theorem 7.9]{Kol1}, it is also equivalent to that $(X, D)$ is canonical.

\section{Reducible divisors}
Given a polarized abelian variety $(A, L)$ of dimension $g$ and degree $d$. The polarization type of $L$ can be written as $l=(l_1, l_2, \ldots, l_g)$, where $l_1|l_2\cdots | l_g$ are positive integers. We know that the isomorphism classes of $(A, L)$ forms a coarse  moduli space $\cA_{g, l}$ which is quasi-projective of dimension $\frac{g(g+1)}{2}$. We call $l_g$ the exponent of $L$ or $l$, which is also denoted by $e(L)$.  We know that non-simple  abelian varieties with polarization  type $l$ form countable many irreducible components of $\cA_{g, l}$. Indeed, a generic element of such an irreducible component corresponds to a polarized abelian variety isogeny to the product of two polarized simple abelian varieties $(K, L_K)\times  (B, L_B)$ and the polarization type $L_K$ or $L_B$ is not bounded, i.e. $h^0(L_K)$ or $h^0(L_B)$ could be arbitrary large.

We say that a polarized abelian variety $(A, L)$ is decomposable if $(A, L)$ is isomorphic to a product  $(K, L_K)\times (B, L_B)$ of positive dimensional polarized abelian varieites. When $(A, L)$ is decomposable and the degree $h^0(L)$ is a prime number, any divisor in $|L|$ is reducible and the study of singularities of divisors in $|L|$ can be reduced to lower dimensional case. Thus it is reasonable to assume that $(A, L)$ is indecomposable.

Let $(A, L)$ be an indecomposable polarized abelian variety of dimension $g$ and degree $d$.
 Debarre and Hacon (\cite[Proposition 2]{DH}) showed that if  $g\geq d$ and there exists a reducible divisor $D=D_1+D_2\in |L|$, then $A$ is not simple.  We recall below Debarre and Hacon's observation.

 It is well-known that an effective divisor $D$ on an abelian variety is always nef and the numerical dimension of $D$ is by definition the maximal integer $m$ such that the cohomology class $[D]^m\in H^{2m}(A, \mathbb Z)$ is non-zero.

 Let $g_i$  be the numerical dimension of $D_i$ for $i=1, 2$ and let $K_i$ be the neutral component of the morphism $\varphi_{D_i}: A\rightarrow \Pic^0(A)$ induced by $D_i$. Then $\dim K_i=g-g_i$ and $D_i$ is the  pull-back of an ample divisor on the quotient $p_i: A\rightarrow B_i:=A/K_i$.   Moreover, $D_i|_{K_{3-i}}=D|_{K_{3-i}}$ is ample for $i=1, 2$.  Hence both $(D_1^{g_1}\cdot D_2^{g-g_1})$  and $(D_1^{g-g_2}\cdot D_2^{g_2})$ are positive integers and by Hodge type inequalities, so is $(D_1^{g_1-i}\cdot D_2^{g-g_1+i})$ for each   $0\leq i\leq g_1+g_2-g$.

 We may assume that $g_2\geq g_1\geq 1$. Then $g_1+g_2\geq g$ and one has
 \begin{eqnarray*}
 d&=&\frac{D^g}{g!}\\
 &=&\frac{D_1^{g_1}}{g_1!}\cdot\frac{D_2^{g-g_1}}{(g-g_1)!}+\frac{D_1^{g_1-1}}{(g_1-1)!}\cdot\frac{D_2^{g-g_1+1}}{(g-g_1+1)!}+\cdots+\frac{D_1^{g-g_2}}{(g-g_2)!}\cdot\frac{D_2^{g_2}}{g_2!}\\
 &=&h^0(D_1)h^0(L|_{K_1})+\cdots+h^0(D_2)h^0(L|_{K_2}),
  \end{eqnarray*}
where $\frac{D_1^{g_1-i}}{(g_1-i)!}\cdot\frac{D_2^{g-g_1+i}}{(g-g_1+i)!}$ are positive integers for each $0\leq i\leq g_1+g_2-g$, since they are intersections of integral cohomology classes.

 Thus $g_1+g_2-g+1\leq d\leq g$, we see that $g_1\leq g-1$. Moreover, \begin{eqnarray}\label{equality-reducible}d\geq h^0(L|_{K_1})h^0(D_1)+h^0(L|_{K_2})h^0(D_2)+g_1+g_2-g-1.\end{eqnarray}
\begin{coro}\label{general-comment}Let $(A, L)$ be an indecomposable polarized abelian variety of degree $d=h^0(L)\leq g=\dim A$. If there exists a reducible divisor $D=D_1+D_2\in |L|$, we have $2\leq h^0(L|_{K_1})\leq d$. Moreover, this family of reducible divisors has codimension $$\geq (h^0(L|_{K_1})-1)h^0(D_1)+(h^0(L|_{K_2})-1)h^0(D_2)$$ inside $|L|$.
\end{coro}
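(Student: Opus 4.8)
The plan is to read off both assertions from the inequality \eqref{equality-reducible} together with the geometric description of the $K_i$ preceding it. First I would establish the bound $2\le h^0(L|_{K_1})$. Since $(A,L)$ is indecomposable, the divisor $D$ is reducible but the polarization itself does not split; in particular neither $D_1$ nor $D_2$ can be the pullback of a divisor under a projection that is a direct factor of $(A,L)$, so the abelian subvarieties $K_1$ and $K_2$ are proper and positive-dimensional. Because $L|_{K_i}$ is ample (as noted, $D_i|_{K_{3-i}}=D|_{K_{3-i}}$ is ample, and symmetrically $D_{3-i}|_{K_i}$ is ample, so $L|_{K_i}=(D_1+D_2)|_{K_i}$ is ample), we get $h^0(L|_{K_i})\ge 1$, and $h^0(L|_{K_i})=1$ would force $K_i$ to carry a principal polarization of a very restricted kind; more directly, if $h^0(L|_{K_1})=1$ then the term $h^0(L|_{K_1})h^0(D_1)$ in the displayed expansion of $d$ equals $h^0(D_1)=h^0(L|_{K_2})$, and one checks using $g_1+g_2\ge g$ together with the positivity of all the intermediate intersection numbers $\frac{D_1^{g_1-i}}{(g_1-i)!}\cdot\frac{D_2^{g-g_1+i}}{(g-g_1+i)!}$ that this is incompatible with $D$ being reducible in an indecomposable $(A,L)$. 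The clean way to see $h^0(L|_{K_1})\ge 2$ is: $K_1$ has dimension $g-g_1\ge 1$, and an ample line bundle of degree $1$ on a positive-dimensional abelian variety is a principal polarization, which combined with the splitting $D=D_1+D_2$ and $D_1=p_1^*(\text{ample on }B_1)$ would exhibit $(A,L)$ as isogenous — and then, after a short argument, isomorphic — to a product, contradicting indecomposability. The upper bound $h^0(L|_{K_1})\le d$ is immediate from \eqref{equality-reducible}, since all the other summands on the right-hand side are nonnegative integers (here one uses $g_1+g_2-g-1\ge -1$, but in fact the sharper expansion of $d$ shows $h^0(L|_{K_1})h^0(D_1)\le d$ directly, and $h^0(D_1)\ge 1$).

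Next I would bound the codimension of the family of such reducible divisors inside $|L|$. The divisor $D_i$ is the pullback $p_i^*M_i$ of an ample divisor $M_i$ on $B_i=A/K_i$ with $h^0(B_i,M_i)=h^0(D_i)$; hence the reducible divisors $D=D_1+D_2$ of this numerical type, with the subvarieties $K_1,K_2$ fixed, are parametrized (up to the finitely many choices coming from translation and the discrete data) by $|M_1|\times|M_2|$, which has dimension $(h^0(D_1)-1)+(h^0(D_2)-1)$. On the other hand $\dim|L|=h^0(A,L)-1=d-1$. So the codimension of this locus in $|L|$ is
\begin{equation*}
\big(d-1\big)-\big((h^0(D_1)-1)+(h^0(D_2)-1)\big)=d+1-h^0(D_1)-h^0(D_2).
\end{equation*}
Using the expansion $d=h^0(L|_{K_1})h^0(D_1)+\cdots+h^0(L|_{K_2})h^0(D_2)$ and keeping only the first and last terms (all intermediate terms being $\ge 0$), we get $d\ge h^0(L|_{K_1})h^0(D_1)+h^0(L|_{K_2})h^0(D_2)$ whenever $g_1+g_2>g$, and with a $-1$ correction as in \eqref{equality-reducible} in general; substituting this lower bound for $d$ into the codimension count yields
\begin{equation*}
\mathrm{codim}\ge (h^0(L|_{K_1})-1)h^0(D_1)+(h^0(L|_{K_2})-1)h^0(D_2),
\end{equation*}
which is the claimed bound. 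I should be slightly careful about the $g_1+g_2-g-1$ term: when $g_1+g_2=g$ the expansion of $d$ has only the two extreme terms and the inequality $d\ge h^0(L|_{K_1})h^0(D_1)+h^0(L|_{K_2})h^0(D_2)$ holds with equality, so no loss occurs; when $g_1+g_2>g$ the intermediate terms are strictly positive integers and their number is exactly $g_1+g_2-g-1$, which is where that summand in \eqref{equality-reducible} comes from, and it only strengthens the codimension bound.

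The main obstacle I anticipate is the lower bound $h^0(L|_{K_1})\ge 2$: one must rule out $h^0(L|_{K_1})=1$ using indecomposability, and the subtlety is that a priori indecomposability of $(A,L)$ is weaker than requiring $A$ itself to be indecomposable — the splitting one extracts from $h^0(L|_{K_1})=1$ is only up to isogeny at first. The argument is that if $K_1$ carries a principal polarization $L|_{K_1}$ and $D_1=p_1^*M_1$, then the natural map $K_1\times B_1\to A$ is an isogeny under which $L$ pulls back to $L|_{K_1}\boxtimes M_1$; a standard argument (using that $L|_{K_1}$ is principal, hence that $K_1$ is its own dual in a compatible way) then upgrades this to an honest isomorphism $(A,L)\cong(K_1,L|_{K_1})\times(B_1,M_1)$, contradicting indecomposability. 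Once this is in place the remaining computations are the elementary bookkeeping with intersection numbers and dimensions of linear systems sketched above.
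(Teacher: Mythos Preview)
Your treatment of the bounds $2\le h^0(L|_{K_1})\le d$ is fine and matches the paper's reasoning (the lower bound via indecomposability is exactly the argument used later in the paper, e.g.\ in Lemma~\ref{DH-d=3}).

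The codimension count, however, has a genuine gap. You parametrize the reducible divisors of the given type by $|M_1|\times|M_2|$, dismissing the remaining freedom as ``finitely many choices coming from translation and the discrete data.'' This is not correct: with $K_1,K_2$ fixed, a reducible divisor in $|L|$ of this numerical type has the form $D_{1,P}+D_{2,-P}$ with $D_{1,P}\in|D_1\otimes P|$, $D_{2,-P}\in|D_2\otimes P^{-1}|$, where $P$ ranges over $\Pic^0(B_1)\cap\Pic^0(B_2)\subset\Pic^0(A)$. This intersection has dimension $g_1+g_2-g$, which is positive whenever $g_1+g_2>g$; it is a continuous parameter, not discrete data. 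So the family has dimension
\[
(g_1+g_2-g)+(h^0(D_1)-1)+(h^0(D_2)-1),
\]
not $(h^0(D_1)-1)+(h^0(D_2)-1)$ as you write. This is precisely the description the paper gives in its proof.

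Your undercount of the family dimension overcounts the codimension by $g_1+g_2-g$, so your displayed equality for the codimension is false in general. It happens that after you substitute \eqref{equality-reducible} and discard the surplus, you land on the stated bound; but that is an accident of two errors cancelling (you drop the $g_1+g_2-g$ from the family dimension and then effectively drop it again from \eqref{equality-reducible}). With the correct parametrization, the codimension is $d-(g_1+g_2-g)-h^0(D_1)-h^0(D_2)+1$, and substituting \eqref{equality-reducible} gives exactly the claimed lower bound with no slack to spare.
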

 \begin{proof}We note that a general member of  this family of reducible divisors can be written as $D_{1, P}+D_{2, -P}$, where $P\in \Pic^0(B_1)\cap \Pic^0(B_2)\subset \Pic^0(A)$, $D_{1, P}\in |D_1+P|$, and $D_{2 -P}\in |D_2-P|$. We then get the estimation of the codimension from (\ref{equality-reducible}).
 \end{proof}

 \begin{coro}\label{non-lc} Let $(A, L)$ be a polarized abelian variety. Assume that $\dim A=g\geq d=h^0(L)$ and there exists an effective $\mathbb Q$-divisor $D\sim_{\mathbb Q}L$ such that $D$ is not boundary, there exists a proper abelian subvariety $K$ such that $h^0(L|_K)\leq \dim K$.
 \end{coro}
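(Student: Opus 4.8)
The plan is to take for $K$ the neutral component of $\Ker(\varphi_{D_1}\colon A\to\Pic^0(A))$, where $D_1$ is a prime component of $D$ occurring with coefficient $c_1>1$ (such a $D_1$ exists precisely because $D$ is not a boundary), and then to run the volume estimate from the discussion preceding Corollary~\ref{general-comment}. The input that makes the bound sharp is the following: writing $D=c_1D_1+R$ with $R=\sum_{i\ge 2}c_iD_i\ge 0$, the class $L-c_1D_1\sim_{\mathbb Q}R$ is nef, since effective divisors on abelian varieties are nef; consequently
\[
L-D_1=\tfrac{1}{c_1}(L-c_1D_1)+\bigl(1-\tfrac{1}{c_1}\bigr)L
\]
is a sum of a nef $\mathbb Q$-divisor and an ample one (here $c_1>1$ is used), hence $L-D_1$ is ample. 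Let $g_1$ be the numerical dimension of $D_1$; then $g_1\ge 1$ since $D_1$ is a nonzero effective divisor, $\dim K=g-g_1$, and $D_1=p^*M$ for $p\colon A\to B:=A/K$ and $M$ ample on $B$ with $\dim B=g_1$ and $h^0(B,M)=D_1^{g_1}/g_1!$; on a fibre $F\cong K$ of $p$ one has $D_1|_F=0$, so $(L-D_1)|_F\sim L|_K$.

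I would then expand $L=(L-D_1)+D_1$ and use $D_1^{\,j}=0$ for $j>g_1$:
\[
d=\frac{L^g}{g!}=\sum_{j=0}^{g_1}\frac{(L-D_1)^{g-j}}{(g-j)!}\cdot\frac{D_1^{\,j}}{j!}.
\]
As in the reducible case, each summand is a positive integer. It is an integer because $\frac{(L-D_1)^{g-j}}{(g-j)!}$ and $\frac{D_1^{\,j}}{j!}=p^*\bigl(\frac{M^j}{j!}\bigr)$ are integral cohomology classes — recall that for an integral class $\alpha\in H^2$ of an abelian variety the divided power $\alpha^k/k!$ is integral for $k$ up to the dimension, which applies here since $g-j\le g=\dim A$ and $j\le g_1=\dim B$ — and it is positive because $L-D_1$ is ample while $D_1^{\,j}$ is a nonzero effective class for $j\le g_1$. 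The $j=g_1$ term equals $h^0(B,M)\,h^0(L|_K)$, using $\frac{D_1^{g_1}}{g_1!}=h^0(B,M)\,[F]$ and $\frac{((L-D_1)|_F)^{g-g_1}}{(g-g_1)!}=h^0(L|_K)$, while each of the other $g_1$ terms is $\ge 1$. Hence $d\ge h^0(L|_K)+g_1$, so
\[
h^0(L|_K)\le d-g_1\le g-g_1=\dim K.
\]
Finally $K$ is proper: if $g_1=g$ then $K=\{0\}$ and $h^0(L|_K)=1$, whence the inequality gives $d\ge g+1$, contradicting $d\le g$; so $1\le g_1\le g-1$ and $\dim K\in\{1,\dots,g-1\}$.

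I do not expect any real obstacle; the only point requiring care is the integrality of the intermediate intersection numbers $\frac{(L-D_1)^{g-j}}{(g-j)!}\cdot\frac{D_1^{\,j}}{j!}$, which is what permits counting each of the $g_1+1$ terms of the sum with multiplicity at least one. This is exactly the integrality already used in the reducible-divisor computation above, now legitimate because $c_1>1$ forces $L-D_1$ to be an ample — in particular integral — class.
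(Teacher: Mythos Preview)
Your proposal is correct and follows essentially the same route as the paper: both take $D_1$ a prime component with coefficient $c_1>1$, observe that $L-D_1$ is ample, expand $d=L^g/g!$ as in the reducible-divisor computation (\ref{equality-reducible}) with $D_2=L-D_1$ (so $g_2=g$), and read off $d\ge h^0(L|_{K_1})+g_1$, whence $h^0(L|_{K_1})\le d-g_1\le g-g_1=\dim K_1$ and $g_1<g$. Your write-up is in fact more explicit than the paper's on two points---the convex-combination argument showing $L-D_1$ is ample, and the contradiction ruling out $g_1=g$---but the underlying argument is the same.
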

\begin{proof}Let $D_1$ be a component of $D$ whose coefficient is $>1$. Then we may write $L=D_1+(L-D_1)$, where $L_1:=L-D_1\sim_{\mathbb Q}D-D_1$ is an ample divisor. Hence by the computation (\ref{equality-reducible}), $D_1$ is not ample and $h^0(L|_{K_1})\leq d-g_1\leq g-g_1=\dim K_1$.
\end{proof}
 We now classify  explicitly non-prime divisors when $d=3$ or $4$.
\begin{lemm}\label{DH-d=3}\label{DH}Assume that $d=3$ and $|L|$ contains a reducible divisor, then
 \begin{itemize}
 \item[(1)] either there exists a decomposable principally polarized abelian variety $(B, \Theta)$ and an isogeny $\pi: A\rightarrow B$ of degree $3$ such that $D=\pi^*\Theta$;
 \item[(2)] or there exists a principal polarization $\Theta$ on $A$, a quotient $p: A\rightarrow E$ from $A$ to an elliptic curve with connected fibers  such that $D=\Theta+p^*x$, where $x\in E$ is a point, and $\frac{\Theta^{g-1}}{(g-1)!}\cdot p^*x=2$.
\end{itemize}
\end{lemm}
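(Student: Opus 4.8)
The plan is to substitute $d=3$ into the identity (recalled above) that expresses $d$ as a sum of $g_1+g_2-g+1$ positive integers — the first summand being $h^0(D_1)h^0(L|_{K_1})$, the last $h^0(D_2)h^0(L|_{K_2})$ — and into its consequence~(\ref{equality-reducible}); this leaves only two possibilities for the numerical dimensions $g_1\le g_2$ of $D_1,D_2$, which I then match with the two alternatives. I keep the notation of that discussion: $K_i\subseteq A$ with $g\le g_1+g_2$, $p_i\colon A\to B_i:=A/K_i$, $D_i=p_i^*M_i$ with $M_i$ ample on $B_i$ and $h^0(D_i)=h^0(M_i)$; and $h^0(L|_{K_1})\ge 2$ by Corollary~\ref{general-comment}. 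Since~(\ref{equality-reducible}) reads $3\ge h^0(D_1)h^0(L|_{K_1})+h^0(D_2)h^0(L|_{K_2})+(g_1+g_2-g-1)$ with the first two summands at least $2$ and at least $1$, we get $g_1+g_2\in\{g,\,g+1\}$; accordingly the identity for $d$ becomes either $3=h^0(D_1)h^0(L|_{K_1})=h^0(D_2)h^0(L|_{K_2})$ (one summand, $g_1+g_2=g$) or $3=h^0(D_1)h^0(L|_{K_1})+h^0(D_2)h^0(L|_{K_2})$ (two summands, $g_1+g_2=g+1$).

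Assume $g_1+g_2=g$. Then $h^0(L|_{K_1})\ge 2$ forces $h^0(D_1)=1$, $h^0(L|_{K_1})=3$; and for $K_2$ (of positive dimension $g_1\ge 1$) either $h^0(D_2)=1$ and $h^0(L|_{K_2})=3$, or $h^0(L|_{K_2})=1$ — but the latter makes $L|_{K_2}$ a principal polarization on a positive-dimensional abelian subvariety, hence $(A,L)$ decomposable, against the hypothesis. So $M_1,M_2$ are principal polarizations. Since $D_1|_{K_2}$ is ample while $\cO_A(D_1)|_{K_1}$ is trivial (as $D_1=p_1^*M_1$ and $p_1$ contracts $K_1$), the abelian subvariety $K_1\cap K_2$ is finite; as $\dim(B_1\times B_2)=g_1+g_2=g$, the map $\pi:=(p_1,p_2)\colon A\to B:=B_1\times B_2$ is an isogeny onto the decomposable principally polarized abelian variety $(B,\Theta)$, $\Theta$ the product polarization. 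Because $h^0(M_i)=1$ and $p_i$ has connected fibres, $D_i$ coincides with $p_i^*M_i$ as a divisor, so $\pi^*\Theta=D$; and $\deg\pi\cdot\chi(\Theta)=\chi(\pi^*\Theta)=\chi(L)=3$ with $\chi(\Theta)=1$, so $\deg\pi=3$. This is alternative~(1).

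Assume $g_1+g_2=g+1$. Then the two-summand identity with $h^0(L|_{K_1})\ge 2$ forces $h^0(D_1)=1$, $h^0(L|_{K_1})=2$, $h^0(D_2)=h^0(L|_{K_2})=1$. Here $\dim K_2=g-g_2=g_1-1$ and $L|_{K_2}$ is principal, so the same appeal to indecomposability gives $K_2=0$, i.e.\ $g_1=1$, $g_2=g$. Hence $B_1=:E$ is an elliptic curve, $D_2$ is ample with $h^0(D_2)=1$ so $\Theta:=D_2$ is a principal polarization on $A$, and (as above) $D_1=p^*x$ where $p:=p_1\colon A\to E$ has connected fibres and $\cO_E(x)=M_1$; thus $D=\Theta+p^*x$. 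Finally $\cO_A(D_1)|_{K_1}$ is trivial, so $L|_{K_1}=\Theta|_{K_1}$, and as $p^*x$ is a reduced fibre of $p$ of cohomology class $[K_1]$ we obtain $\frac{\Theta^{g-1}}{(g-1)!}\cdot p^*x=\frac{(\Theta|_{K_1})^{g-1}}{(g-1)!}=h^0(\Theta|_{K_1})=h^0(L|_{K_1})=2$, which is alternative~(2). The one genuinely substantive point is the twofold use of indecomposability to discard the parasitic subcases $h^0(L|_{K_2})=1$ (when $g_1+g_2=g$) and $g_1\ge 2$ (when $g_1+g_2=g+1$), each via the standard fact that a polarized abelian variety containing a positive-dimensional abelian subvariety on which the polarization restricts to a principal polarization is decomposable; the remaining verifications — realising each $D_i$ as an honest pullback from $B_i$ and evaluating the displayed intersection number — are routine.
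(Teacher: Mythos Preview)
Your proof is correct and follows essentially the same route as the paper: expand $3=\frac{D^g}{g!}$ as the sum of $g_1+g_2-g+1$ positive integers, use indecomposability to force $h^0(L|_{K_i})\ge 2$ on positive-dimensional $K_i$, and read off the two cases $g_1+g_2=g$ and $g_1+g_2=g+1$. You are more explicit than the paper in actually constructing the isogeny $\pi=(p_1,p_2)$ and computing $\deg\pi=3$ in case~(1), and in verifying the intersection number $\frac{\Theta^{g-1}}{(g-1)!}\cdot p^*x=2$ in case~(2), but the underlying argument is the same.
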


\begin{proof}Let $D=D_1+D_2\in |L|$ be a reducible divisor. We denote by $g_i$ the numerical dimension of $D_i$.
 We have $g_1+g_2\geq g$ and $$3=\frac{D^g}{g!}=\frac{D_1^{g_1}}{g_1!}\cdot\frac{D_2^{g-g_1}}{(g-g_1)!}+\cdots+\frac{D_1^{g-g_2}}{(g-g_2)!}\cdot\frac{D_2^{g_2}}{g_2!}.$$
 We may assume that $g_1<g$ and $g_1\leq g_2$ and assume that $D_i=p_i^*H_i$ where $p_i: A\rightarrow B_i$ is a quotient between abelian varieties with connected fibers and $H_i$ is an ample divisor
 on $B_i$. We denote by $K_i$ the kernel of $p_i$. Note that for any positive dimensional abelian subvariety $K$ of $A$, $h^0(K, L|_K)\geq 2$, otherwise $(A, L)$ would  be decomposable. Then $\frac{D_1^{g_1}}{g_1!}\cdot\frac{D_2^{g-g_1}}{(g-g_1)!)}=h^0(B_1, H_1)\cdot h^0(K, L|_K)\geq 2$.
 Moreover, for any $g-g_2\leq k\leq g_1$, $\frac{D_1^k}{k!}\cdot \frac{D_2^{g-k}}{(g-k)!}\geq 1$.
 Thus we have only two possibilities: $g_1+g_2=g$, both $(B_1, H_1)$ and $(B_2, H_2)$ are PPAVs, and $h^0(K_i, L|_{K_i})=3$ or $g_1=1$, $g_2=g$,  both $(B_1, H_1)$ and $(A, D_2)$ are PPAVs and $h^0(K_1, L|_{K_1})=h^0(K_1, D_2|_{K_1})=2$.
\end{proof}

\begin{lemm}\label{DH-d=4} if $d=4$, then
 \begin{itemize}
 \item[(1)]either there exists a decomposable polarized abelian variety $(A', L')$ of degree $1$ or $2$, an  isogeny $\pi: A\rightarrow B$ of degree $\frac{4}{d}$, such that $D=\pi^*D'$ where $D'\in |L'|$;
 \item[(2)] or there exists a principal polarization $\Theta$ on $A$, a quotient $p: A\rightarrow E$ to an elliptic curve with connected fibers whose kernel is $K$ such that $h^0(K, \cO_A(\Theta)|_K)=3$ and $D=\Theta+p^*x$ for some $x\in E$;
 \item[(3)] or there exists a   polarization $L_2$ on $A$, a quotient $p: A\rightarrow E$ to an elliptic curve with connected fibers whose kernel is $K$ such that $L=L_2\otimes\cO_A(K)$, $h^0(A, L_2)=h^0(K, L_2|_K)=2$  and $D=D'+p^*x$ for some $x\in E$ and $D'\in |L_2\otimes p^*\cO_E(o_E-x)|$;
 \item[(4)]or there exists quotients $p_i: A\rightarrow B_i$ with connected kernels $K_i$ such that $(B_i, \Theta_i)$ a principally polarized abelian variety of dimension $g_i\geq 2$ for $i=1, 2$,  $g_1+g_2=g+1$, $D_i=p_i^*B_i $, and $h^0(K_1, D_2|_{K_1})=h^0(K_2, D_1|_{K_2})=2$.
   \end{itemize}
\end{lemm}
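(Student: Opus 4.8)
The plan is to argue exactly as in Lemma~\ref{DH-d=3}: read off the numerical constraints forced by a reducible $D=D_1+D_2\in|L|$ together with $h^0(L)=4$, enumerate the finitely many compatible patterns, and match each to a conclusion (1)--(4). First I would write $D_i=p_i^*H_i$ for a quotient $p_i\colon A\to B_i$ with connected fibres onto an \av{} of dimension $g_i$ equal to the numerical dimension of $D_i$, with $H_i$ ample and $K_i=\ker p_i$ of dimension $g-g_i$; after relabelling we may assume $1\le g_1\le g_2\le g$, and throughout we may assume $g\ge2$. Since $(A,L)$ is indecomposable, $h^0(K,L|_K)\ge2$ for every positive-dimensional abelian subvariety $K$; in particular $h^0(K_i,L|_{K_i})\ge2$ whenever $g_i<g$, and $L|_{K_i}=D_{3-i}|_{K_i}$ on $K_i$. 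The basic identity is
\[
4=\frac{D^g}{g!}=\sum_{k=g-g_2}^{g_1}\frac{D_1^k}{k!}\cdot\frac{D_2^{g-k}}{(g-k)!},
\]
a sum of $s+1$ positive integers with $s:=g_1+g_2-g\ge0$ and extreme terms $h^0(D_1)h^0(L|_{K_1})$ (at $k=g_1$) and $h^0(D_2)h^0(L|_{K_2})$ (at $k=g-g_2$). Hence $s\le3$, and I would examine $s=3,2,1,0$ in turn.

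If $s=3$ then all four summands equal $1$, so $h^0(D_1)h^0(L|_{K_1})=1$; this forces $g_1=g$, hence $g_1=g_2=g$ and $g=3$, so $D_1,D_2$ are theta divisors on the threefold $A$ with $D_1\cdot D_2^2=D_1^2\cdot D_2=2$, contradicting the Hodge-type inequality $(D_1\cdot D_2^2)^2\ge(D_2^3)(D_1^2\cdot D_2)=12$. If $s=2$ the summands are $1,1,2$ in some order; when $g_1<g$ the $k=g_1$ term is $\ge2$, hence equals $2$ with $h^0(D_1)=1,\ h^0(L|_{K_1})=2$, which forces the $k=g-g_2$ term to be $1$ --- impossible if $g_2<g$, and, if $g_2=g$ (so $g_1=2$), contradicting the Hodge-type inequality for $D_1\cdot D_2^{g-1},\ D_1^2\cdot D_2^{g-2},\ D_2^g$. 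So $s=2$ forces $g_1=g_2=g=2$; on the abelian surface $A$ the Hodge index theorem then forces $D_1,D_2$ to be principal polarisations with $D_1\cdot D_2=2$, and its equality case gives $D_1\equiv D_2$, so $L\equiv\cO_A(2\Theta)$ with $(A,\Theta)$ the Jacobian of a smooth genus-$2$ curve and $D=2\Theta$ --- whose support is irreducible, so this does not occur under the hypothesis that $|L|$ contains a reducible divisor.

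If $s=0$ then $g_1+g_2=g$ and $K_1\cap K_2$ is finite (otherwise the ample $L$ would be pulled back from a proper quotient), so $A$ is isogenous to $K_1\times K_2$ and the single summand is $m\cdot h^0(D_1)h^0(D_2)=4$ with $m:=\deg\bigl(A\to A':=A/(K_1\cap K_2)\bigr)$; as $D$ is $(K_1\cap K_2)$-invariant it descends to a reducible divisor $D'$ on $A'$ with $h^0(\cO_{A'}(D'))=4/m$. Here $m=1$ would make $L$ a product polarisation, so $m\in\{2,4\}$ and $(A',\cO_{A'}(D'))$ is decomposable of degree $4/m\in\{1,2\}$, which is conclusion (1). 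If $s=1$ there are two summands adding to $4$; when $g_1,g_2<g$ both are $\ge2$, hence each is $2$, giving $h^0(D_i)=1$ (so $(B_i,H_i)$ is a \ppav{}), $h^0(L|_{K_i})=2$, $g_1+g_2=g+1$ and $g_i\ge2$, which is conclusion (4). When instead $g_2=g$ (so $g_1=1$, $B_1=E$ an elliptic curve, $D_2$ ample on $A$) the summands are $\deg(H_1)\,h^0(L|_{K_1})$ and $h^0(\cO_A(D_2))$, equal to $\{3,1\}$ or $\{2,2\}$: the first gives $\deg H_1=1$, $(A,\cO_A(D_2))=(A,\Theta)$ a \ppav{}, $h^0(K_1,\Theta|_{K_1})=3$ and $D=\Theta+p_1^*(x)$, which is conclusion (2); the second gives $\deg H_1=1$, $\cO_A(D_2)$ of degree $2$, $h^0(K_1,\cO_A(D_2)|_{K_1})=2$ and $D=D_2+p_1^*(x)$, and writing $K:=K_1$ and $L_2:=L\otimes\cO_A(-K)$ (so that $L=L_2\otimes\cO_A(K)$ and $\cO_A(K)=p_1^*\cO_E(o_E)$) identifies this with conclusion (3).

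The numerical dichotomy above is short; the substance, and the main obstacle, is twofold: (i) checking that the Hodge-type inequalities really do eliminate \emph{all} $s\ge2$ configurations --- this requires comparing non-adjacent intersection numbers, not just the endpoint terms --- and (ii) matching the two surviving regimes $s\in\{0,1\}$ to the precise normal forms, in particular exhibiting the isogeny $A\to A/(K_1\cap K_2)$ with the stated degree in (1), verifying $h^0(A,L_2)=h^0(K,L_2|_K)=2$ in (3), and keeping track of the degree-$0$ twists relating $\cO_A(D_2)$, $L_2$ and $L$. I expect the bookkeeping in the $s=1$ regime, rather than the inequalities, to demand the most care.
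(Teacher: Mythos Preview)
Your approach is the same as the paper's: expand $4=\frac{D^g}{g!}$ as a sum of $s+1$ positive integers with $s=g_1+g_2-g$, use indecomposability to force $h^0(L|_{K_i})\ge2$ whenever $g_i<g$, and eliminate $s\ge2$ by a Hodge--Teissier inequality; the paper organises the $s=1$ case by the partition $(3,1)$ versus $(2,2)$ rather than by whether $g_2=g$, but the two splittings are equivalent and your matching of the surviving patterns to (1)--(4) is correct.

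One caveat: the lemma is stated within Section~3 under the standing hypothesis $g\ge d$, so here $g\ge4$, which already forces $g_1\le g-1$ and makes your $g_1=g$ sub-cases superfluous. In your $s=2$, $g=2$ digression you conclude from $D_1\equiv D_2\equiv\Theta$ that ``$D=2\Theta$ has irreducible support''; this is not right, since $D_1$ and $D_2$ can be distinct translates $\Theta_a,\Theta_b$, giving a genuinely reducible $D$ that fits none of (1)--(4). That case simply lies outside the lemma's scope, so the slip does not affect the proof of the statement as intended.
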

\begin{proof}
The proof is similar to that of Lemma \ref{DH-d=3}. We apply the same notations there. We have \begin{eqnarray*} 4 &=&\frac{D^g}{g!}=\frac{D_1^{g_1}}{g_1!}\cdot\frac{D_2^{g-g_1}}{(g-g_1)!}+\cdots+\frac{D_1^{g-g_2}}{(g-g_2)!}\cdot\frac{D_2^{g_2}}{g_2!}\\
&=&h^0(H_1)h^0(L|_{K_1})+\frac{D_1^{g_1-1}}{(g_1-1)!}\cdot\frac{D_2^{g-g_1+1}}{(g-g_1+1)!}+\cdots+h^0(L|_{K_2})h^0(H_2).
\end{eqnarray*}

If $g_1+g_2=g$, we are in case $(1)$. Note that either $h^0(B_1, H_1)=h^0(B_2, H_2)=1$ and $\deg \pi=4$ or $h^0(B_1, H_1)h^0(B_2, H_2)=2$ and $\deg \pi=2$.

If $g_1+g_2=g+1$, we have two possibilities: either $h^0(B_1, H_1)=1$, $h^0(L|_{K_1})=3$, and $h^0(L|_{K_2})h^0(B_2, H_2)=1$ or $h^0(B_1, H_1)=1$, $h^0(L|_{K_1})=2$, and $h^0(B_2, H_2)h^0(L|_{K_2})=2$.

In the former case, since $h^0(L|_{K_2})=1$ and $(A, L)$ is indecomposable, we have $g_2=g$, $g_1=1$, and $H_2=\Theta$ is a principal polarization on $A$. We are in Case 2.

In the latter case, if $h^0(L|_{K_2})=1$, we have $B_2=A$ and $h^0(A, H_2)=2$. In this case $g_1=1$ and $h^0(H_2|_{K_1})=2$. We are in Case 3.

If we have $h^0(H_2)=1$ in the latter case, we are in Case 4.

Finally, we  need to rule out the possibility that $g_1+g_2\geq 2$. The only non-trivial case we need to rule out is that: $g_1+g_2=g+2$, $ h^0(L|_{K_1})h^0(H_1)=2$, $\frac{D_1^{g_1-1}}{(g_1-1)!}\cdot\frac{D_2^{g-g_1+1}}{(g-g_1+1)!}=1$ and $h^0(L|_{K_2})h^0(H_2)=1$. In this case $g_1=2$, $g_2=g$, $(H_1^2)=2$, $(D_2^{g-2}\cdot K_1)=2(g-2)!$,  $D_1\cdot D_2^{g-1}=(g-1)!$, and $(D_2^g)=g!$. However, by Hodge inequality, we have $(D_1\cdot D_2^{g-1})\geq \sqrt{(D_1^2\cdot D_2^{g-2})(D_2^g)}=2(g-2)!\sqrt{g(g-1)}>2(g-1)!$, which is a contradiction.
\end{proof}
\begin{rema}\label{dimension-reducible}
When $d=3$, the reducible divisors are discrete in Case 1 of Lemma \ref{DH-d=3} and form a family of dimension $1$ in Case 2 of Lemma \ref{DH-d=3}.

When $d=4$, the reducible divisors forms a family of dimension $\leq 2$ and the only $2$-dimensional case is Case 3 of Lemma \ref{DH-d=4}. We claim that in this case $(A, L_2)$ is indecomposable and a general member of this family is the sum of a translate of $K$ with an irreducible ample divisor.

 Let $p: A\rightarrow E:=A/K$ be the quotient.
We consider the short exact sequence $$0\rightarrow L_2(-K)\rightarrow L_2\rightarrow L_2|_K\rightarrow 0.$$ Note that $\chi(L_2(-K))=0$. If $h^0(L_2(-K)\otimes p^*Q)>0$ for $Q\in \Pic^0(E)$ general, $L_2(-K)$ is nef whose Iitaka model dominates $E$. This is impossible, since $L_2(-K)|_K=L|_K$ is ample, thus $L_2(-K)$ is ample which would imply that $\chi(L_2(-K))>0$.  Therefore, the restriction map $H^0(L_2\otimes Q)\rightarrow H^0((L_2\otimes Q)|_K)=H^0(L_2|_K)$ is an isomorphism. Since $(A, L)$ is indecomposable, $(K, L_2|_K)=(K, L|_K)$ is also indecomposable. Thus a general divisor in $|L_2|_K|$ is integral, which implies that a general divisor in $|L_2\otimes p^*Q|$ is also irreducible for $Q\in \Pic^0(E)$ general. In particular, $(A, L_2)$ is indecomposable.
\end{rema}

\section{Sums of subvarieties}

Let $Y$ be an irreducible subvariety of an abelian variety $A$ and let $$I(Y)=\{a\in A\mid a+Y=Y\}.$$
Then the neutral component of $I(Y)_0$ is an abelian subvariety with $\dim I(Y)\leq \dim Y$ and $Y$ is fibred by $I(Y)_0$.
Therefore,  $\dim I(Y)=\dim Y$ if and only if $Y$ is an abelian subvariety. We also know that $\dim I(Y)=0$ iff any desingularization of $Y$ is of general type.

Let $V$ and $W$ be two irreducible subvarieties of an abelian variety $A$ and assume that $\dim V=\dim (V+W)$, then   $W$ is contained in some translate of $I(V)_0$. For example,
if   $\dim W=\dim V=\dim (V+W)$, then both $V$ and $W$ are some translate of an abelian subvariety $K$.

The   following lemma is essentially due to \cite{D}.
\begin{lemm}\label{fibred}
	Let $V$ and $W$ be two irreducible subvarieties of an abelian variety $A$, if  $\dim V +\dim W> \dim (V+W)$, then $V+W$ is fibred by an abelian subvariety of $A$ of dimension $\geq \dim V +\dim W-\dim (V+W)$.
\end{lemm}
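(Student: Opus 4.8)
The plan is to study the addition morphism $s\colon V\times W\to A$, $(v,w)\mapsto v+w$, whose image is $Z:=V+W$, and to reduce the statement to the following special case: \emph{if $I(V+W)_0=0$, then the addition map is generically finite, i.e. $\dim(V+W)=\dim V+\dim W$.} Granting this, the lemma follows quickly. After translating $V$ and $W$ we may assume $0\in V\cap W$ (this changes neither $I(V+W)$ nor the property of being fibred by an abelian subvariety), so that $V,W\subseteq Z$ and hence $\dim Z\ge\max(\dim V,\dim W)\ge\tfrac12(\dim V+\dim W)$, giving $\dim Z\ge k:=\dim V+\dim W-\dim(V+W)\ge 1$. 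Set $T:=I(Z)_0$. If $\dim T\ge k$ we are done. Otherwise pass to $\pi\colon A\to\bar A:=A/T$ and put $\bar V:=\pi(V)$, $\bar W:=\pi(W)$, $\bar Z:=\pi(Z)=\bar V+\bar W$; since $Z$ is $T$-stable one has $\dim\bar Z=\dim Z-\dim T$ and $I(\bar Z)_0=0$, while $\dim\bar V\ge\dim V-\dim T$ and $\dim\bar W\ge\dim W-\dim T$, so $\dim\bar V+\dim\bar W-\dim\bar Z\ge k-\dim T>0$. This contradicts the special case applied to $\bar V,\bar W\subseteq\bar A$, so $\dim T\ge k$.

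To attack the special case, assume $I(Z)_0=0$ and suppose for contradiction that the general fibre of $s$ has dimension $k':=\dim V+\dim W-\dim Z\ge 1$. First I would fix a general $z\in Z$ and an irreducible component $F$ of $s^{-1}(z)$ of dimension $k'$; the two projections identify $F$ with irreducible $k'$-dimensional subvarieties $F_V\subseteq V$ and $F_W=z-F_V\subseteq W$. Because the abelian subvarieties of $A$ form a countable set and $Z$ is irreducible, a standard argument (the locus of $z$ with $I(F_V)_0$ equal to a fixed abelian subvariety is constructible, and $Z$ is a countable union of such loci) shows that $K:=I(F_V)_0$ does not depend on the general point $z$. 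Since $F_V$ is stable under translation by $K$ we have $F_V-F_V\supseteq K$, and therefore
$$z+K\subseteq z+(F_V-F_V)=F_V+F_W\subseteq V+W=Z$$
for every general $z$; passing to closures gives $Z+K=Z$, i.e. $K\subseteq I(Z)_0=0$. Hence $K=0$: the general fibre of $s$ has trivial stabiliser and so, by Ueno's theorem, is of general type. Likewise $Z$ itself has trivial stabiliser, hence is of general type and in particular $Z\subsetneq A$; one also checks (arguing as above with the roles of $V,W$) that one may assume $\dim Z>\dim V$ and $\dim Z>\dim W$, so that $V$ and $W$ are swept out by the genuinely moving family of general-type subvarieties $\{F_V(z)\}_{z\in Z}$ and $\{F_W(z)\}_{z\in Z}$.

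So the heart of the matter — and the step I expect to be the main obstacle — is to derive a contradiction from the existence of a proper general-type subvariety $Z\subsetneq A$ which is the image of a surjection $s\colon V\times W\to Z$ with general-type general fibres and $\dim Z<\dim V+\dim W$. One partial input is subadditivity of the Kodaira dimension for fibrations whose general fibre is of general type (Kollár): applied to (a Stein factorisation of) $s$ it forces $\kappa(V\times W)\ge k'+\kappa(Z)=k'+\dim Z=\dim V+\dim W$, hence $\kappa(V)=\dim V$ and $\kappa(W)=\dim W$, i.e. $V$ and $W$ are themselves of general type. The remaining and genuinely delicate point is to show that a general-type subvariety of an abelian variety cannot be swept out by such a family of positive-dimensional general-type subvarieties unless $Z$ fills up an abelian subvariety — which, together with $I(Z)_0=0$, would give $Z=A$, contradicting $Z\subsetneq A$. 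This is where I would follow Debarre's analysis in \cite{D} (via the incidence variety of the cycles $F_V(z)\subseteq V$ and the behaviour of Gauss maps of subvarieties of abelian varieties), and I expect it to be the technically hardest part of the argument.
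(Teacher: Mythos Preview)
Your reduction to the case $I(Z)_0=0$ is sound, but the attack on that special case goes down the wrong path and leaves a real gap. You study the \emph{stabiliser} $K=I(F_V)_0$ of the fibre; since this can perfectly well be trivial, all you extract is that the fibres are of general type, and you are then left trying to rule out a general-type $Z\subsetneq A$ swept out by general-type fibres. That is not a situation with an obvious contradiction (general-type varieties are routinely covered by moving general-type subvarieties), and the Kodaira-dimension subadditivity you invoke only tells you $V$ and $W$ are of general type, which does not help. Your final paragraph is an honest admission that the argument is incomplete, but the ``Debarre analysis'' you hope to invoke is not the Gauss-map story you sketch.

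The paper's proof avoids all of this by looking at the abelian subvariety $\langle F_a\rangle$ \emph{generated} by the fibre rather than its stabiliser; this is automatically of dimension $\ge\dim F_a\ge k$. The key (and elementary) observation is a tangent-space computation: since the differential of $m\colon V\times W\to A$ is addition, for every $x\in F_a$ one has $T_xF_a\subseteq T_xV\subseteq T_aZ$ (identifying all tangent spaces with subspaces of $T_0A$). Taking the linear span over $x\in F_a$ gives $T_0\langle F_a\rangle\subseteq T_aZ$. By countability of abelian subvarieties, $\langle F_a\rangle=K$ is constant for general $a$, so $T_0K\subseteq T_aZ$ for general $a\in Z$. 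Now one invokes Debarre's Lemma~2.3, which says precisely that an irreducible $Z$ whose (translated) tangent space at a general point contains $T_0K$ must satisfy $K+Z=Z$. This is a short direct argument with no reduction step and no birational geometry; your detour through $I(F_V)_0$ and Kodaira dimension is unnecessary and does not close.
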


\begin{proof}
	Let $m: V\times W \rightarrow A$ be the addition map and let $Z$ be its image. By the hypothesis, for any
	$a\in Z$, we have that $$m^{-1}(a)=\{(x,a-x)|x\in F_a\}$$ has positive dimension, where $F_a\subseteq V$ is the projection of $m^{-1}(a)$ on $W$. Since the differential map of $m$ is just addition of tangent vectors,
	we have $T_a Z\supseteq T_x F_a$ for every $x\in F_a$. Let $\langle F_a\rangle$ be the abelian subvariety
	generated by $F_a$, then $T_a(a+\langle F_a\rangle)\subseteq T_aZ$, since $T_0 \langle F_a\rangle$ is the linear span of
	$T_xF_a, x\in F$. Moreover, since $A$ contains at most countably many abelian subvarieties, $\langle F_a\rangle$ is a fixed abelian subvariety $K$ for $a$ general.
Therefore, by lemma 2.3 of \cite{D}, $Z$ is fibred by $K$.
\end{proof}
\section{Generic vanishing}
We summarize and improve in this section some results from generic vanishing due to Debarre-Hacon, Pareschi, and Pareschi-Popa.

Let $\cF$ be a coherent sheaf on an abelian variety $A$. We define its $i$-th cohomological support loci $$V^i(\cF):=\{P\in\Pic^0(A)\mid H^i(A, \cF\otimes P)\neq 0\}.$$ We say that $\cF$ is IT$^0$, or M-regular, or GV,  if respectively $V^i(\cF)=\emptyset$, or $\codim_{\Pic^0(A)}V^i(\cF)>i$, for $i>0$ or $\codim_{\Pic^0(A)}V^i(\cF)\geq i$, for $i>0$. For a GV-sheaf $\cF$ which is not IT$^0$, we define its gv-index $\mathrm{gv}(\cF)$ to be the non-negative integer $\min\{\codim_{\Pic^0(A)}V^i(\cF)-i\mid i>0\}.$ If $V^i(\cF)=\emptyset$, we set $\codim_{\Pic^0(A)}V^i(\cF)=\infty$. Thus an IT$^0$ sheaf is indeed M-regular.

It may be better to understand IT$^0$, M-regular, and GV sheaves  by considering the Fourier-Mukai transform on $A$. Let $\cP$ be the normalized Poincar\'e bundle on $A\times\Pic^0(A)$ and let $$\Phi_{\cP}: \mathrm{D}(A)\rightarrow\mathrm{ D}(\Pic^0(A))$$ be the Fourier-Mukai functor between the derived category of bounded complexes of coherent sheaves of $A$ and $\Pic^0(A)$. A coherent sheaf $\cF$ on $A$ is IT$^0$ iff $\Phi_{\cP}(\cF)=R^0\Phi_{\cP}(\cF)$ is a vector bundle on $\Pic^0(A)$ and we denote $\widehat{\cF}:=R^0\Phi_{\cP}(\cF)$ in this  case. Let $\cF^{\vee}=R\mathcal{H}om(\cF, \cO_A[\dim A])\in \mathrm{D}(A)$ be the dual of $\cF$. Then $\cF$ be GV (resp. M-regular ) iff $\Phi_{\cP}(\cF^{\vee})=R^0\Phi_{\cP}(\cF^{\vee})$ is a sheaf (resp. torsion-free sheaf) on $\Pic^0(A)$ (see  \cite{PP2}).

Let $(A, L)$ be a   polarized abelian variety of dimension $g$ and degree $d$. Let $D\sim_{\mathbb Q}L$ be a  boundary divisor.  We now consider the ideal sheaf $\cI_Z$ associated with the pair $(A, D)$ where $\cI_Z$ is either $\cJ^+_D:=\cJ(A, (1-\epsilon)D)$ for $0<\epsilon\ll 1$, or $\cA_D:=\mathrm{adj}(D)$ when $D\in |L|$ is a prime divisor, or $\cJ_D:=\cJ(A, D)$ when $\lfloor D\rfloor=0$. Note that $Z$ has codimension $\geq 2$ under these assumptions.

\begin{prop}\label{set-up}Under the above assumptions, $L\otimes \cI_Z$ is IT$^0$ when $\cI_Z=\cJ^+_D$ and $L\otimes \cI_Z$ is  M-regular when $\cI_Z=\cA_D$ or $\cJ_D$. Moreover, in the latter case, $V^i(L\otimes \cI_Z)$ is a union of torsion translates of abelian subvarieties of codimension $\geq i+1$ for $i>0$.
\end{prop}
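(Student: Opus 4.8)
The plan is to reduce all three cases to the $\cJ^+_D$ case, which I would settle by Nadel vanishing, and then to control the auxiliary sheaves produced by these reductions using generic vanishing and the adjunction theorem for log canonical centres (Theorem~\ref{adjunction}). For $\cI_Z=\cJ^+_D$ the argument is essentially Nadel vanishing: since $D\sim_{\mathbb Q}L$ with $L$ ample, the $\mathbb Q$-divisor $L\otimes P-(1-\epsilon)D\sim_{\mathbb Q}\epsilon L\otimes P$ is numerically ample for every $P\in\Pic^0(A)$ and $0<\epsilon\ll1$; passing to a log resolution $\mu\colon X\to A$ of $(A,D)$, the divisor $\mu^*L-\lfloor(1-\epsilon)\mu^*D\rfloor$ is the pull-back of a numerically ample $\mathbb Q$-divisor plus the fractional simple normal crossing divisor $\{(1-\epsilon)\mu^*D\}$, so Kawamata--Viehweg--Nadel vanishing gives $H^i(X,\omega_X\otimes\cO_X(\mu^*L-\lfloor(1-\epsilon)\mu^*D\rfloor)\otimes\mu^*P)=0$ for $i>0$ and all $P$; local vanishing for multiplier ideals lets the Leray spectral sequence for $\mu$ degenerate, and since $\omega_A=\cO_A$ this yields $H^i(A,L\otimes\cJ^+_D\otimes P)=0$ for all $i>0$, i.e.\ $L\otimes\cJ^+_D$ is IT$^0$.

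For $\cI_Z=\cA_D$ with $D\in|L|$ prime, I would use the Ein--Lazarsfeld sequence \eqref{EL}, which on an abelian variety becomes $0\to\cO_A\to L\otimes\cA_D\to\mu_*\omega_{D'}\to0$ with $D'$ a resolution of $D$. For $P\ne\cO_A$ one has $H^i(A,\cO_A\otimes P)=0$ for all $i$, so $V^i(L\otimes\cA_D)\subseteq V^i(\mu_*\omega_{D'})\cup\{\cO_A\}$; moreover $H^g(A,L\otimes\cA_D\otimes P)=\Hom(\cA_D,L^{-1}\otimes P^{-1})^\vee=H^0(A,L^{-1}\otimes P^{-1})^\vee=0$ because $Z$ has codimension $\ge2$, $L^{-1}\otimes P^{-1}$ is reflexive, and $L$ is ample, so $V^g(L\otimes\cA_D)=\emptyset$. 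As $\{\cO_A\}$ has codimension $g$, M-regularity and the support-locus statement for $L\otimes\cA_D$ follow once one knows the same for $\mu_*\omega_{D'}$. The latter is a pushforward of a canonical bundle, so its $V^i$ are torsion translates of abelian subvarieties by the structure theory of cohomology jump loci (Green--Lazarsfeld, Simpson, Schnell), and it is M-regular because $D$, being an ample divisor, is geometrically nondegenerate: an ample divisor meets every abelian subvariety properly and cannot be fibred over a lower-dimensional variety by translates of a positive-dimensional abelian subvariety, which is exactly what one needs in order to exclude a component of $V^i(\mu_*\omega_{D'})$ of codimension $i$.

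For $\cI_Z=\cJ_D$ with $\lfloor D\rfloor=0$, if $(A,D)$ is Kawamata log terminal then $\cJ_D=\cO_A$ and $L\otimes\cJ_D=L$ is IT$^0$; otherwise $\cJ_D\subseteq\cJ^+_D$ with quotient $\cQ:=L\otimes(\cJ^+_D/\cJ_D)$ supported on the union $W$ of the minimal log canonical centres of $(A,D)$, which has codimension $\ge2$ because $\lfloor D\rfloor=0$. The crucial claim here is that $\cQ$ is even IT$^0$, not merely GV: by adjunction (Theorem~\ref{adjunction}) on the normalization $\nu\colon V\to W$ one has $K_V\sim_{\mathbb Q}\nu^*L-\mathrm B_V$, so the ample twist $L$ in $\cQ$ contributes $\nu^*L$, which is big and nef on a resolution of $V$; Kawamata--Viehweg vanishing together with the vanishing of higher direct images of $\omega$ under that resolution then makes $\cQ$ IT$^0$ (iterating on the centres of $\mathrm B_V$, and on the non-log-canonical locus, as needed). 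Since $L\otimes\cJ^+_D$ is IT$^0$, the long exact sequence of $0\to L\otimes\cJ_D\to L\otimes\cJ^+_D\to\cQ\to0$ forces $V^i(L\otimes\cJ_D)=\emptyset$ for all $i\ge2$; it then remains to show that $V^1(L\otimes\cJ_D)$ has codimension $\ge2$, which I would do by induction on $g$: $L\otimes\cJ_D$ is GV (the generic vanishing for twisted multiplier ideals on abelian varieties being available from the work of Hacon and Debarre--Hacon), so a codimension-one component of $V^1$ would have to be a torsion translate of $q^*\Pic^0(E)$ for a quotient $q\colon A\to E$ onto an elliptic curve, and restricting to the general fibre of $q$ and invoking the inductive statement together with the generic restriction theorem for multiplier ideals produces a contradiction; the torsion-translate structure of the $V^i$ again comes from generic vanishing theory.

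The part I expect to be the main obstacle is the two M-regularity upgrades: verifying that ampleness of $L$ genuinely forces $\mu_*\omega_{D'}$ (for $D$ an ample divisor) and the adjunction sheaf $\cQ$ to be M-regular, respectively IT$^0$, rather than merely GV --- this is where the geometry of $D$ as a nondegenerate subvariety and the precise form of adjunction must be combined with Kawamata--Viehweg --- and then carrying out the codimension-one exclusion for $V^1(L\otimes\cJ_D)$. One must also check that the description of the support loci as unions of torsion translates of subtori survives the short exact sequences used, which rests on the strong generic vanishing results of Green--Lazarsfeld, Simpson, Schnell and Pareschi--Popa.
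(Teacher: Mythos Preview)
Your treatment of $\cJ^+_D$ and of $\cA_D$ is essentially the paper's argument: Nadel vanishing for the first, and the Ein--Lazarsfeld sequence together with M-regularity of $\mu_*\omega_{D'}$ (via nondegeneracy of a prime ample divisor, \cite[Lemma~2.1]{JLT}) for the second.

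The gap is in the $\cJ_D$ case. Your key claim, that $\cQ:=L\otimes(\cJ^+_D/\cJ_D)$ is IT$^0$, is not established by the adjunction you invoke. Theorem~\ref{adjunction} gives $\nu^*L\sim_{\mathbb Q}K_V+\mathrm B_V$ on the normalization of a centre, but since $D\sim_{\mathbb Q}L$ exactly there is \emph{no residual ample class}: $\nu^*(L\otimes P)-K_V\sim_{\mathbb Q}\mathrm B_V+\nu^*P$ is numerically just the boundary $\mathrm B_V$, which need not be big, so Kawamata--Viehweg does not apply. This is precisely the dichotomy in the paper's Proposition~\ref{components}: when $D\sim_{\mathbb Q}cL$ with $c<1$ the leftover $(1-c)L$ makes $L\otimes\cI_C$ IT$^0$, but when $c=1$ one only gets GV. And even granting GV (or M-regularity) of $\cQ$, the long exact sequence yields $V^i(L\otimes\cJ_D)\subset V^{i-1}(\cQ)$, so at best $\codim V^i\geq i$; you never reach $\geq i+1$. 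Your fallback induction on $g$ to handle $V^1$ is also not a closed loop: restricting to a fibre changes the degree $h^0(L|_K)$, so you are not reproving the same statement in lower dimension.

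The paper bypasses all of this by observing that on a log resolution $\mu\colon X'\to A$ one has
\[
L\otimes\cJ_D \;=\; \mu_*\cO_{X'}\bigl(K_{X'}+\{\mu^*D\}\bigr),
\]
i.e.\ $L\otimes\cJ_D$ is itself the pushforward of the log canonical bundle of the \emph{klt} pair $(X',\{\mu^*D\})$. The structure theorem for such pushforwards to abelian varieties (\cite{J}, \cite{M}) then gives a dichotomy: either $L\otimes\cJ_D$ is M-regular, or it is isomorphic to $p^*\cL$ for a rank-one sheaf $\cL$ on a proper quotient abelian variety. The second alternative is impossible because $\codim Z\geq 2$ forces $L\otimes\cJ_D$ to be ample in codimension one. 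The torsion-translate description of the $V^i$ comes for free from the same package. This is a direct one-step argument and avoids the exact-sequence bookkeeping altogether.
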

\begin{proof}
It follows from Nadel's vanishing that $L\otimes \cI_Z$ is IT$^0$ when $\cI_Z=\cJ^+_D$.

When $D\in |L|$ is a prime divisor and $\cI_Z=\cA_D$, we consider the exact sequence (\ref{EL}). Note  that $L$ is IT$^0$ and $\mu_*\cO_{D'}(K_{D'})$ is M-regular since $D$ is a prime divisor and thus not fibred by positive-dimensional abelian subvarieties (see   \cite[Lemma 2.1]{JLT}). Thus $L\otimes\cI_Z$ is also M-regular and $V^i(L\otimes \cI_Z)=V^i(\mu_*\cO_{D'}K_{D'})$ is a union of torsion translates of abelian subvarieties of codimension $\geq i+1$ for $i>0$.

When $\lfloor D\rfloor=0$ and $\cI_Z=\cJ_D$, the statement essentially follows from \cite[The proof of Theorem 7 and 8, case 1]{DH}.  For the reader's convenience, we provide a slightly different argument. Fix a log resolution $\mu: X'\rightarrow A$ of $(A, D)$ and we write $K_{X'}=\mu^*D+\sum_{i}a_iE_i$. Note that $$L\otimes \cI_Z=L\otimes\mu_*\cO_{X'}(K_{X'}-\lfloor \mu^*D\rfloor)=\mu_*(\mu^*L-\lceil \sum_ia_iE_i\rceil).$$ Moreover, $K_{X'}+\mu^*L-\lfloor \mu^*D\rfloor\sim_{\mathbb Q}K_{X'}+\{\mu^*D\}$. The pair $(X, \{\mu^*D\})$ is a klt pair. Thus by the main result of \cite{J} or \cite{M}, $L\otimes\cI_Z$ is either M-regular or there exists a quotient $p: A\rightarrow B$ between abelian varieties and a rank $1$ sheaf $\mathcal L$ on $B$ such that $L\otimes\cI_Z\simeq p^*\cL$. The latter is impossible since $Z\subset A$ has codimension $\geq 2$ by the assumption that $\lfloor D\rfloor=0$ and thus $L\otimes \cI_Z$ is ample restricted to an open subset of $A$ whose complementary has codimension $\geq 2$.
\end{proof}

The following lemma of Debarre and Hacon \cite[Lemma 6]{DH} is crucial.
\begin{lemm}\label{DH-lemma}Under the above assumption, let $\cI_Z$ be either $\cA_D$ or $\cJ_D$. Assume that $V^i(L\otimes \cI_Z) \neq \emptyset$ for some $i>0$, then $\dim Z\geq \dim V^i(L\otimes \cI_Z)+i-1$.
\end{lemm}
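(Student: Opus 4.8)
The plan is to concentrate the $i$-th cohomology jump along the quotient of $A$ attached to a maximal-dimensional component of $V^i(L\otimes\cI_Z)$, and then restrict everything to a general fibre of that quotient, where the exceptional locus $Z$ becomes genuinely small and an elementary estimate finishes the job. Write $\cF:=L\otimes\cI_Z$ and choose an irreducible component $T$ of $V^i(\cF)$ with $\dim T=\dim V^i(\cF)=:v$. By Proposition~\ref{set-up}, $T=P_0\otimes\widehat B$ is a torsion translate of an abelian subvariety $\widehat B\subseteq\Pic^0(A)$; dualising the inclusion $\widehat B\hookrightarrow\Pic^0(A)$ produces a surjection $\pi\colon A\to B$ with connected kernel $K$ such that $\dim B=v$, $\dim K=g-v$ and $\widehat B=\pi^*\Pic^0(B)$. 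In particular $H^i(A,\cF\otimes P_0\otimes\pi^*Q)\neq0$ for \emph{every} $Q\in\Pic^0(B)$, and we must prove $\dim Z\geq v+i-1$.

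Next I would restrict to a general fibre $A_b=\pi^{-1}(b)$, a translate of $K$ of dimension $g-v$. By the generic restriction theorems for multiplier ideals and adjoint ideals (cf.\ \cite[9.5.35]{Laz2} and its analogue for adjoint ideals), for general $b$ one has $\cF|_{A_b}\simeq L|_{A_b}\otimes\cI_{Z_b}$, where $Z_b$ is the corresponding locus for $(A_b,D|_{A_b})$, still of codimension $\ge2$, and $\dim Z_b=\dim Z-\dim\pi(Z)$. On $A_b$ one has the elementary bound: for an ample line bundle $L'$ on an abelian variety $A'$ and a closed subscheme $Z'$ of codimension $\ge2$, the sequence $0\to L'\otimes\cI_{Z'}\to L'\to L'\otimes\cO_{Z'}\to0$ together with $L'$ being IT$^0$ gives $V^i(L'\otimes\cI_{Z'})\subseteq V^{i-1}(L'\otimes\cO_{Z'})$, which is empty for $i-1>\dim Z'$ by Grothendieck vanishing; hence $V^i(L'\otimes\cI_{Z'})\neq\emptyset$ forces $\dim Z'\ge i-1$. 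Thus it suffices to show $V^i(\cF|_{A_b})\neq\emptyset$ for $b$ general: then $\dim Z_b\ge i-1$, in particular $Z_b\neq\emptyset$, so $\pi(Z)=B$ and $\dim Z-v=\dim Z_b\ge i-1$, which is the assertion.

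The heart of the matter, and the step I expect to be hardest, is this last non-vanishing on the general fibre: one must rule out that the degree-$i$ cohomology present over the whole $v$-dimensional family $T=P_0\otimes\widehat B$ is absorbed by the base $B$ rather than surviving along the fibres of $\pi$. I would run cohomology and base change over a dense open $U\subseteq B$ on which $\cF\otimes P_0$ is flat, so that $R^a\pi_*(\cF\otimes P_0)|_U$ has fibre $H^a\!\big(A_b,\cF|_{A_b}\otimes(P_0)|_{A_b}\big)$ at $b$; the claim then amounts to $R^i\pi_*(\cF\otimes P_0)$ being nonzero on $U$. The Leray spectral sequence turns the hypothesis $H^i(A,\cF\otimes P_0\otimes\pi^*Q)\ne0$ for all $Q$ into $\Pic^0(B)=\bigcup_{a\ge0}V^a\!\big(R^{i-a}\pi_*(\cF\otimes P_0)\big)$, so by irreducibility of $\Pic^0(B)$ one of these loci is everything. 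To force this onto the $a=0$ term, I would use the generic-vanishing properties of $\cF$: in the $\cA_D$ case the exact sequence $(\ref{EL})$ exhibits $L\otimes\cA_D$ as an extension of $\mu_*\omega_{D'}$ by $\cO_A$, and in the $\cJ_D$ case one uses the presentation of $L\otimes\cJ_D$ underlying Proposition~\ref{set-up}; in either case the sheaves $R^j\pi_*(\cF\otimes P_0)$ are GV sheaves on $B$, and in fact torsion-free — this is where Koll\'ar's torsion-freeness and Hacon's generic vanishing for higher direct images of canonical bundles enter, modulo the (flat, hence harmless) contribution of $R^j\pi_*\cO_A$. Since a GV sheaf has $\codim V^a\ge a$, only the $a=0$ term can exhaust $\Pic^0(B)$, so $R^i\pi_*(\cF\otimes P_0)$ is a nonzero torsion-free sheaf on $B$, hence nonzero on $U$, which yields $V^i(\cF|_{A_b})\neq\emptyset$ for general $b$. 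Everything outside this last paragraph is formal; the real work is the control of the higher direct images $R^j\pi_*(\cF\otimes P_0)$ and the flatness bookkeeping over $U$ (cf.\ \cite{DH}).
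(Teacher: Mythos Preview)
Your proposal is correct and follows essentially the Debarre--Hacon argument that the paper cites: pass to the quotient $\pi\colon A\to B$ attached to a maximal component $T=P_0\otimes\widehat B$ of $V^i(\cF)$, use Koll\'ar torsion-freeness and Hacon generic vanishing (via the realisation of $L\otimes\cI_Z$ as a summand of a pushforward of a canonical bundle, exactly as the paper's remark after the lemma indicates) to force $R^i\pi_*(\cF\otimes P_0)$ to be a nonzero torsion-free sheaf on $B$ through the Leray spectral sequence, and then extract the fibre-dimension bound $\dim Z-v\ge i-1$. The only cosmetic difference is that you base-change to a general fibre $A_b$ before running the short exact sequence $0\to\cF\to L\to L|_Z\to0$, whereas the original stays on $B$ and uses the surjection $R^{i-1}\pi_*(L|_Z\otimes P_0)\twoheadrightarrow R^i\pi_*(\cF\otimes P_0)$ (from $R^j\pi_*(L\otimes P_0)=0$ for $j>0$) directly; both routes yield $\pi(Z)=B$ and the same conclusion.
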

The main point of this lemma is that when $\cI_Z$ is either $\cJ^+_D,$ or $\cA_D$, or $\cJ_D$,  $L\otimes\cI_Z$ is, in some sense, always a direct summand of the pushforward of the canonical bundle of some smooth projective variety and hence its higher direct images have nice properties.

The following results are due to Pareschi (see \cite[see the proof of Theorem B]{P}).

\begin{prop}\label{pareschi} Let $\cI_Z$ be an ideal sheaf on $A$. Assume that $L\otimes\cI_Z$ is IT$^0$, $\mathscr{E}xt^k(\cI_Z, \cO_X)=0$ for $k\geq \chi(L\otimes\cI_Z)$. In particular, for each irreducible component $Z_i$ of $Z$ (including the embedding components), $\codim_AZ_i\leq \chi(L\otimes\cI_Z)$.
\end{prop}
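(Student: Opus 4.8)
The plan is to transfer the problem to the dual abelian variety $\widehat A$ via the Fourier--Mukai transform, where it turns into a Le Potier-type vanishing for an ample vector bundle. Write $g=\dim A$, $\cF:=L\otimes\cI_Z$ and $n:=\chi(L\otimes\cI_Z)$. We may assume $Z\neq\emptyset$, so $\cF$ is a nonzero IT$^0$ sheaf and $n=\rank\widehat{\cF}\geq 1$. It suffices to prove $\mathscr{E}xt^i(\cF,\cO_A)=0$ for all $i\geq n$: twisting by $L^{-1}$ then gives $\mathscr{E}xt^i(\cI_Z,\cO_A)=0$ for $i\geq n$, and for the last assertion one uses that at the generic point of a (possibly embedded) component of $Z$ of codimension $c$ one has $\mathscr{E}xt^{c}(\cO_Z,\cO_A)\neq 0$ by local duality, whence $\mathscr{E}xt^{c-1}(\cI_Z,\cO_A)\cong\mathscr{E}xt^{c}(\cO_Z,\cO_A)\neq 0$ when $c\geq 2$ (and for $c=1$ the bound $c\leq n$ is clear as $n\geq 1$).

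First I would note that $L\otimes\cO_Z$ is again IT$^0$: this is immediate from the exact sequence $0\to\cF\to L\to L\otimes\cO_Z\to 0$ together with the vanishing $H^i(L\otimes P)=H^i(\cF\otimes P)=0$ for $i>0$. Applying $\Phi_{\cP}$ to this sequence yields a short exact sequence of vector bundles on $\widehat A$,
\[
0\longrightarrow\widehat{\cF}\longrightarrow\widehat{L}\longrightarrow\widehat{L\otimes\cO_Z}\longrightarrow 0,
\]
and dualizing shows that $\widehat{\cF}^{\,*}:=\mathcal{H}om(\widehat{\cF},\cO_{\widehat A})$ is a nonzero quotient of $\widehat{L}^{\,*}$, of rank $n$. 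The crucial point is that $\widehat{L}^{\,*}$ is an \emph{ample} vector bundle on $\widehat A$: pulling back along the polarization isogeny $\phi_L\colon A\to\widehat A$ and using $(\id_A\times\phi_L)^*\cP\cong m^*L\otimes p_1^*L^{-1}\otimes p_2^*L^{-1}$ together with flat base change, one computes $\phi_L^*\widehat{L}\cong H^0(A,L)\otimes L^{-1}$, hence $\phi_L^*\widehat{L}^{\,*}\cong L^{\oplus d}$ with $d=h^0(A,L)$, which is ample; since $\phi_L$ is finite and surjective, $\widehat{L}^{\,*}$ is ample, and therefore so is its quotient $\widehat{\cF}^{\,*}$.

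Now I would apply Le Potier vanishing to the ample bundle $\widehat{\cF}^{\,*}\otimes P$ of rank $n$ on the abelian variety $\widehat A$, whose canonical bundle is trivial: this gives $H^i(\widehat A,\widehat{\cF}^{\,*}\otimes P)=0$ for every $i\geq n$ and every $P\in\Pic^0(\widehat A)$. By cohomology and base change, vanishing of the $H^i$ along all fibres forces $R^i\Phi_{\cP^\vee}(\widehat{\cF}^{\,*})=0$ for $i\geq n$, where $\Phi_{\cP^\vee}$ is the inverse Fourier--Mukai functor. To conclude I would use the standard duality for the Fourier--Mukai transform: since $\cF$ is GV, $\Phi_{\cP}(\cF^\vee)$ is a coherent sheaf, canonically identified up to $(-1_A)^*$ with $\widehat{\cF}^{\,*}$; Mukai's inversion theorem then identifies $\mathscr{E}xt^i(\cF,\cO_A)$, up to $(-1_A)^*$, with $R^i\Phi_{\cP^\vee}\big(\Phi_{\cP}(\cF^\vee)\big)$, hence with $R^i\Phi_{\cP^\vee}(\widehat{\cF}^{\,*})$. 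The previous vanishing then yields $\mathscr{E}xt^i(\cF,\cO_A)=0$ for $i\geq n$, which is what we wanted.

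The step I expect to be the real content — and the one most easily overlooked — is the second paragraph: manipulating $\cF$ purely through the Fourier--Mukai transform and Grothendieck duality is circular, and the argument only gets off the ground because $\widehat{L}^{\,*}$ is ample, so that Le Potier's vanishing applies to $\widehat{\cF}^{\,*}$ with its exact rank $n=\chi(L\otimes\cI_Z)$. The rest is bookkeeping: tracking the shifts and the $(-1_A)^*$ twists in the Fourier--Mukai duality and inversion formulas, and checking carefully that fibrewise vanishing of cohomology indeed kills the sheaves $R^i\Phi_{\cP^\vee}$.
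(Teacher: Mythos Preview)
Your proposal is correct and follows essentially the same route as the paper: the paper simply cites \cite[proof of Theorem B(1)]{P} for the vanishing $\mathscr{E}xt^k(\cI_Z,\cO_A)=0$, and what you have written is precisely Pareschi's argument there (Fourier--Mukai transform, ampleness of $\widehat L^{\,*}$ and hence of its quotient $\widehat{\cF}^{\,*}$, then Le Potier vanishing and Mukai inversion/duality). For the codimension bound your appeal to local duality at the generic point of a component (including embedded ones, via depth $0$ and Auslander--Buchsbaum) is the same mechanism as the paper's explicit submodule-and-Ext-sequence argument, just phrased more tersely.
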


\begin{proof}
Indeed, by \cite[the proof of Theorem B (1)]{P}, $\mathscr{E}xt^k(\cI_Z, \cO_X)=0$ for $k\geq \chi(L\otimes\cI_Z)$. Thus $\mathscr{E}xt^k(\cO_Z, \cO_X)=0$ for $k\geq \chi(L\otimes \cI_Z)+1$. For each irreducible component $Z_i$ of $Z$ (including the embedding components), there exists locally a  sheaf $M\hookrightarrow \cO_Z$ whose support is equal to $Z_i$. Let $k$ be the codimension of $Z_i$ inside $A$. We have
$$ \mathscr{E}xt^k(\cO_Z, \cO_A)\rightarrow \mathscr{E}xt^k(M, \cO_A)\rightarrow \mathscr{E}xt^{k+1}(\cO_Z/M, \cO_A).$$

Since the support of $\mathscr{E}xt^{k+1}(M, \cO_A)$ is $Z_i$ and the support of $$\mathscr{E}xt^{k+1}(\cO_Z/M, \cO_A)$$ has codimension $\geq k+1$, we conclude that $\mathscr{E}xt^k(\cO_Z, \cO_A)\neq 0$. Thus, for each irreducible component $Z_i$ of $Z$ (including the embedding components), $\codim_AZ_i\leq \chi(L\otimes\cI_Z)$.
\end{proof}

\begin{prop}\label{codim} Let $\cI_Z$ be either $\cA_D$ or $\cJ_D$. Assume that  $V^i(L\otimes \cI_Z) \neq \emptyset$ for some $i>0$, $\codim_A Z\leq \mathrm{gv}(L\otimes \cI_Z)+1\leq \chi(L\otimes\cI_Z)+1$.\end{prop}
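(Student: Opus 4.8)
The first inequality is a direct consequence of Lemma~\ref{DH-lemma}. Since $L\otimes\cI_Z$ is M-regular it is in particular a GV-sheaf, and since $V^i(L\otimes\cI_Z)\neq\emptyset$ for some $i>0$ its gv-index $v:=\mathrm{gv}(L\otimes\cI_Z)$ is a well-defined nonnegative integer; choose $i_0>0$ with $V^{i_0}(L\otimes\cI_Z)\neq\emptyset$ realizing it, so that $\codim_{\Pic^0(A)}V^{i_0}(L\otimes\cI_Z)=v+i_0$. Applying Lemma~\ref{DH-lemma} with $i=i_0$ gives $\dim Z\ge\dim V^{i_0}(L\otimes\cI_Z)+i_0-1$, hence
$$\codim_AZ=g-\dim Z\le g-\dim V^{i_0}(L\otimes\cI_Z)-i_0+1=\codim_{\Pic^0(A)}V^{i_0}(L\otimes\cI_Z)-i_0+1=v+1.$$

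For the second inequality we must show $\mathrm{gv}(L\otimes\cI_Z)\le\chi(L\otimes\cI_Z)$; recall that an M-regular sheaf has strictly positive Euler characteristic, so the right-hand side is at least $1$. The plan is to reduce to $i=1$ and then to study the short exact sequence $0\to L\otimes\cI_Z\to L\to L\otimes\cO_Z\to 0$ under the Fourier--Mukai transform. For the reduction, note that $L\otimes\cI_Z$ is not IT$^0$; for a GV-sheaf this is equivalent to $V^1(L\otimes\cI_Z)\neq\emptyset$ (as one sees from the Fourier--Mukai characterization of IT$^0$ sheaves), so, since $\mathrm{gv}(L\otimes\cI_Z)$ is the minimum of $\codim_{\Pic^0(A)}V^i(L\otimes\cI_Z)-i$ over $i\ge1$, we get $\mathrm{gv}(L\otimes\cI_Z)\le\codim_{\Pic^0(A)}V^1(L\otimes\cI_Z)-1$, and it suffices to prove $\codim_{\Pic^0(A)}V^1(L\otimes\cI_Z)\le\chi(L\otimes\cI_Z)+1$. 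Applying $\Phi_\cP$ to the sequence above, $\Phi_\cP(L)=\widehat L$ is locally free of rank $d$ since $L$ is IT$^0$, while $\Phi_\cP(L\otimes\cO_Z)$ is concentrated in degrees $\le g-\codim_AZ$ because its input is supported on $Z$. The long exact sequence then gives $R^j\Phi_\cP(L\otimes\cI_Z)\cong R^{j-1}\Phi_\cP(L\otimes\cO_Z)$ for $j\ge2$ and an exact sequence $0\to R^0\Phi_\cP(L\otimes\cI_Z)\to\widehat L\xrightarrow{\ \phi\ }R^0\Phi_\cP(L\otimes\cO_Z)\to R^1\Phi_\cP(L\otimes\cI_Z)\to 0$; here $R^1\Phi_\cP(L\otimes\cI_Z)$ has support inside $V^1(L\otimes\cI_Z)$, which is a proper closed subset, so it is torsion and $\ker\phi=R^0\Phi_\cP(L\otimes\cI_Z)$ has rank $\chi(L\otimes\cI_Z)=d-n$, where $n:=\chi(L\otimes\cO_Z)=\rank R^0\Phi_\cP(L\otimes\cO_Z)$. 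Thus $V^1(L\otimes\cI_Z)$ is the union of the degeneracy locus of $\phi$ with the supports of $R^{j'}\Phi_\cP(L\otimes\cO_Z)$ for $j'\ge1$, and the goal is to produce among these a component of codimension $\le d-n+1=\chi(L\otimes\cI_Z)+1$.

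The substantive step — and the one I expect to be the main obstacle — is producing such a low-codimensional component, since $R^0\Phi_\cP(L\otimes\cO_Z)$ is only torsion-free and not locally free, so the naive determinantal estimate for the degeneracy locus of a map into an arbitrary rank-$n$ sheaf is not available. Here one uses Proposition~\ref{set-up}: after translating $L$ by a torsion point so that the component $B$ of $V^{i_0}(L\otimes\cI_Z)$ realizing the gv-index is an abelian subvariety of $\Pic^0(A)$, let $p\colon A\to A'$ be the quotient of $A$ dual to $B$, so that $\dim\ker p=i_0+v$. Pushing the sequence forward along $p$ and using that $Z$ has codimension $\ge2$ (so that $R^0p_*(L\otimes\cI_Z)$ is, on generic fibres, a twist of an IT$^0$ sheaf), one reduces to an IT$^0$ situation on $A'$ where Proposition~\ref{pareschi} applies and yields the desired codimension bound. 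Carrying this reduction out in detail — in particular verifying the IT$^0$-on-fibres claim and keeping track of the higher direct images $R^{j'}p_*(L\otimes\cI_Z)$ — is where the real work lies. Alternatively, the inequality $\mathrm{gv}(L\otimes\cI_Z)\le\chi(L\otimes\cI_Z)$ can be cited directly from Pareschi's analysis in \cite{P} as a general feature of M-regular sheaves on abelian varieties.
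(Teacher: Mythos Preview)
Your treatment of the first inequality is correct and is exactly what the paper does: pick $i_0>0$ realizing the gv-index and apply Lemma~\ref{DH-lemma}.

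For the second inequality, however, your proposal does not actually close. The Fourier--Mukai computation you set up is fine as far as it goes, but you yourself flag that the key step --- producing a component of $V^1(L\otimes\cI_Z)$ of codimension at most $\chi(L\otimes\cI_Z)+1$ when $R^0\Phi_\cP(L\otimes\cO_Z)$ is only torsion-free --- is ``where the real work lies,'' and the sketch via restriction to a quotient abelian variety and Proposition~\ref{pareschi} is not carried out (and would require nontrivial verification of the IT$^0$ claim on fibres). As written this is an outline with an acknowledged gap, not a proof.

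The paper bypasses all of this: the inequality $\mathrm{gv}(\cF)\le\chi(\cF)$ for a GV-sheaf $\cF$ on an abelian variety is the main result of Pareschi--Popa \cite{PP1}, and the paper simply cites it. Your closing ``alternatively'' sentence is therefore the right move, but note the reference should be \cite{PP1}, not \cite{P}; the latter is Pareschi's paper on simple abelian varieties, which uses but does not establish this general inequality. Once you cite \cite{PP1} your proof collapses to the paper's two-line argument.
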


\begin{proof}By Lemma \ref{DH-lemma}, for $i>0$ such that $\codim_AV^i(L\otimes \cI_Z)-i$ computes the gv-index of $L\otimes \cI_Z$, we have $\codim_AZ\leq g-\dim V^i(L\otimes \cI_Z)-i+1=\mathrm{gv}(L\otimes \cI_Z)+1\leq \chi(L\otimes\cI_Z)+1$, where the last inequality follows from the main result of \cite{PP1}.
\end{proof}

\begin{lemm}\label{not1}Assume that $(A, L)$ is indecomposable. Let $\cI_Z$ be an ideal sheaf such that $L\otimes\cI_Z$ is M-regular and $Z\neq \emptyset$. Then  $2\leq  \chi(A, L|_Z)\leq d-1$, unless $Z$ is a reduced point.
\end{lemm}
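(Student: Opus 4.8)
The plan is to relate the pair $(A,D)$ to the structure sheaf of $Z$ through the short exact sequence
$$0\to L\ot\cI_Z\to L\to L\ot\cO_Z\to 0 .$$
Since $L$ is IT$^0$ (being ample), twisting by $P\in\Pic^0(A)$ and taking cohomology gives $H^i\bigl(A,(L\ot\cO_Z)\ot P\bigr)\isom H^{i+1}\bigl(A,(L\ot\cI_Z)\ot P\bigr)$ for every $i\geq 1$, hence $V^i(L\ot\cO_Z)=V^{i+1}(L\ot\cI_Z)$ for $i\geq 1$; as $L\ot\cI_Z$ is M-regular (in particular GV), the right-hand side has codimension $\geq i+1>i$, so $L\ot\cO_Z$ is M-regular as well. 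A nonzero M-regular sheaf $\cF$ has $\chi(A,\cF)\geq 1$: being \cgg, the locus $V^0(\cF)$ meets every nonempty open subset of $\Pic^0(A)$ and, being closed by upper semicontinuity, equals $\Pic^0(A)$; combined with $H^i(A,\cF\ot P)=0$ for $i>0$ and general $P$, this gives $\chi(A,\cF)=h^0(A,\cF\ot P)\geq 1$. Applying this to $L\ot\cI_Z$ and to $L\ot\cO_Z$, and using additivity of Euler characteristics in the sequence, $\chi(A,L|_Z)=\chi(A,L\ot\cO_Z)=d-\chi(A,L\ot\cI_Z)\leq d-1$, while $\chi(A,L|_Z)\geq 1$. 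This establishes the upper bound.

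For the lower bound it remains to show that $\chi(A,L|_Z)=1$ forces $Z$ to be a reduced point. If $\dim Z=0$ then $\chi(A,L|_Z)=\mathrm{length}\,\cO_Z=1$, so $Z$ is a reduced point; assume therefore $\dim Z\geq 1$, aiming for a contradiction. Apply $R\mathcal{H}om(-,\cO_A)$ to the sequence above to obtain an exact triangle $(L\ot\cO_Z)^\vee\to L^\vee\to(L\ot\cI_Z)^\vee\to{}$, and then apply $\Phi_{\cP}$. Because $L\ot\cI_Z$, $L$, and $L\ot\cO_Z$ are all M-regular (the middle one even IT$^0$), each of the three Fourier--Mukai transforms $\widehat{(L\ot\cO_Z)^\vee}$, $\widehat{L^\vee}$, $\widehat{(L\ot\cI_Z)^\vee}$ is a sheaf concentrated in degree $0$, so the triangle yields a short exact sequence of coherent sheaves on $\Pic^0(A)$,
$$0\to\widehat{(L\ot\cO_Z)^\vee}\to\widehat{L^\vee}\to\widehat{(L\ot\cI_Z)^\vee}\to 0 ,$$
in which $\widehat{L^\vee}$ is locally free of rank $d$, both outer terms are torsion-free, and $\widehat{(L\ot\cO_Z)^\vee}$ has rank $\chi(A,L|_Z)=1$. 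Hence $\widehat{(L\ot\cO_Z)^\vee}\isom\cI_W\ot N$ for some line bundle $N$ and some closed subscheme $W\subseteq\Pic^0(A)$ of codimension $\geq 2$ (possibly empty), realised as a subsheaf of the locally free sheaf $\widehat{L^\vee}$.

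Now invert the transform: by Mukai's theorem $(L\ot\cO_Z)^\vee$ is, up to a shift and $(-1_A)^*$, the Fourier--Mukai transform of $\cI_W\ot N$; since $L\ot\cO_Z$, hence $(L\ot\cO_Z)^\vee$, is torsion with support $Z$ of dimension $<g$, running $0\to\cI_W\ot N\to N\to\cO_W\ot N\to 0$ through the transform forces $N$ to be a degenerate line bundle, i.e. pulled back (up to a numerically trivial twist) from a nontrivial quotient $\Pic^0(A)\to\widehat B$. Consequently $L\ot\cO_Z$ is supported on a translate of the dual abelian subvariety $B\subseteq A$, with $1\leq\dim B\leq g-1$, and is the pushforward from that translate of a sheaf of Euler characteristic $1$; comparing annihilators identifies $Z$ with the translate and the sheaf with $L|_B$. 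But then $\chi(A,L|_Z)=\chi(B,L|_B)=h^0(B,L|_B)$, which is $\geq 2$ because $(A,L)$ is indecomposable. This contradicts $\chi(A,L|_Z)=1$, so $\dim Z=0$ and $Z$ is a reduced point.

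The technical heart of the argument is the last paragraph: since $L\ot\cI_Z$ and $L\ot\cO_Z$ are only assumed M-regular, not IT$^0$, their transforms are a priori complexes, and the rank-one torsion-free sheaf $\cI_W\ot N$ has to be analysed with care — in particular the case $W\neq\varnothing$, where one must verify that the extra codimension-$\geq 2$ locus $W$ cannot enlarge the support of the inverse transform beyond a translate of $B$. An alternative, closer in spirit to the intersection-theoretic computation for reducible divisors, is to bound $\chi(A,L|_Z)$ from below by positivity of the intersection numbers of the nef classes involved and to show that the extremal value $1$ forces $Z$ to contain a translate of a positive-dimensional abelian subvariety carrying a principal polarization, again contradicting indecomposability.
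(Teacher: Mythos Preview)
Your upper bound and the M-regularity of $L|_Z$ are obtained correctly, via the same short exact sequence as in the paper. But you extract less than is there: from $V^i(L|_Z)=V^{i+1}(L\otimes\cI_Z)$ for $i\geq 1$ and the \emph{M-regularity} (not just GV) of $L\otimes\cI_Z$, one gets $\codim V^i(L|_Z)>i+1$, i.e.\ $\mathrm{gv}(L|_Z)\geq 2$, not merely $\geq 1$. You write ``codimension $\geq i+1>i$'', which throws away exactly the extra unit that matters.

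This sharper bound is how the paper closes the gap you yourself flag. The paper splits into two cases. If $L|_Z$ is not IT$^0$, the Pareschi--Popa inequality $\chi(L|_Z)\geq\mathrm{gv}(L|_Z)$ (the main result of \cite{PP1}) gives $\chi(L|_Z)\geq 2$ at once. If $L|_Z$ is IT$^0$ with $\chi(L|_Z)=1$, then the \emph{direct} transform $\widehat{L|_Z}$ is already a line bundle on $\Pic^0(A)$ --- no ideal-sheaf twist, no locus $W$ --- and standard Mukai theory then forces $Z$ to be a translate of an abelian subvariety $K$ with $(K,L|_K)$ principally polarized, contradicting indecomposability when $\dim K>0$.

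Your route through the dual transform $\widehat{(L|_Z)^\vee}\cong\cI_W\otimes N$ would reach the same conclusion when $W=\varnothing$, but the case $W\neq\varnothing$ is left as an unverified claim about supports of inverse transforms of twisted ideal sheaves. That case is precisely the non-IT$^0$ case; rather than analysing it directly, note that $\mathrm{gv}(L|_Z)\geq 2$ together with $\chi(L|_Z)=1$ and Pareschi--Popa's inequality already rules it out (forcing $L|_Z$ to be IT$^0$, hence $W=\varnothing$). Once you make this observation, the detour through duals and the hedged final paragraph become unnecessary.
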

\begin{proof}By Proposition \ref{set-up}, we know that $L\otimes\cI_Z$ is M-regular and hence $\chi(L\otimes \cI_Z)>0$ by the main result of \cite{PP1}. By the short exact sequence $$0\rightarrow L\otimes \cI_Z\rightarrow L\rightarrow L|_Z\rightarrow 0,$$ we see that $L|_Z$ is M-regular and its gv-index is $\geq 2$. If $L|_Z$ is not IT$^0$, we know by \cite{PP1} that $\chi(A, L|Z)\geq \mathrm{gv}(L|_Z)\geq 2$. If $L|_Z$ is IT$^0$ and $\chi(A, L|_Z)=1$,  the Fourier-Mukai transform $\widehat{(L|_Z)}$ of $L|_Z$ is a line bundle on $\Pic^0(A)$. Thus $Z$ is a translate of an abelian subvariety $K$ of $A$ and $(K, L|_K)$ is a principally polarized abelian variety. If $\dim K>0$, $(A, L)$ is decomposable by \cite[Lemma 1]{DH}, which contradicts the assumption.
\end{proof}

%Shibata extended some results of generic vanishing to log canonical pairs in \cite{S}.
%\begin{theo}\label{lc-gv}Let $(X, D)$ be a log canonical pair. Given a morphism $f: X\rightarrow A$ from $X$ to an abelian variety and a Cartier divisor $H$ on $X$ such that $H\sim_{\mathbb Q}K_X+D$, $R^if_*\cO_X(H)$ is GV for any $i\geq 0$. Moreover, $V^0(f_*\cO_X(H)))$ is a union of torsion translates of abelian subvarieties of $\Pic^0(A)$.\end{theo}\begin{proof}See \cite[Theorem 3.5 and Proposition 4.2]{S}.\end{proof}

 \begin{prop}\label{components}
 Let $(A, L)$ be a polarized abelian variety. Assume that  $(A, D)$ is a log canonical pair. Let $C$ be the union of some log canonical centers of $(A, D)$. Then,
  \begin{itemize}
   \item when $D\sim_{\mathbb Q}cL$  and $0<c<1$,  $L\otimes \cI_C$ is IT$^0$;
   \item when  $D\sim_{\mathbb Q}L$,  $L\otimes \cI_C$ is GV.
  \end{itemize}
 \end{prop}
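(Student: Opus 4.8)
The plan is to realize $L\otimes\cI_C$ as a direct summand (or subsheaf with a controllable quotient) of the pushforward of a canonical bundle from a log resolution, and then feed this into the generic vanishing machinery (Hacon's theorem / the results of \cite{J}, \cite{M}, together with \cite{PP1}). First I would take a log resolution $\mu\colon X'\to A$ of $(A,D)$ on which the log canonical centers forming $C$ are realized as images of divisors $E_j$ with discrepancy $a_j=-1$. Let $E = \sum_{j\in S} E_j$ be the reduced sum of the exceptional (or strict-transform) divisors mapping onto the chosen centers; after possibly blowing up more, I may assume $E + \mu^{-1}_*D$ plus the other exceptional divisors form an snc divisor and that the $E_j$ in $S$ are disjoint or at least meet transversally. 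The key cohomological object is $\mu_*\cO_{X'}(K_{X'/A} - \lfloor \mu^*D\rfloor + E)$, which one checks equals $\cI_C\otimes\cO_A$ up to twisting: roughly, since each center in $C$ carries discrepancy exactly $-1$, rounding up $\sum a_iE_i$ and then adding back the centers in $S$ picks out precisely the ideal of $C$. Thus I want
$$
L\otimes\cI_C \;\simeq\; \mu_*\bigl(\mu^*L + K_{X'/A} - \lfloor\mu^*D\rfloor + E\bigr).
$$

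Second, I would analyze the $\mathbb Q$-linear equivalence class of the line bundle being pushed forward. When $D\sim_{\mathbb Q} cL$ with $0<c<1$, we have $\mu^*L + K_{X'/A} - \lfloor\mu^*D\rfloor + E \sim_{\mathbb Q} K_{X'} + E + \{-\mu^*D\} + (1-c)\mu^*L$, where $(1-c)\mu^*L$ is nef and big (pulled back from an ample class) and $E+\{-\mu^*D\}$ is an snc boundary; so this is of the form $K_{X'} + (\text{snc boundary}) + (\text{nef and big})$. By Nadel-type vanishing / the relative Kawamata-Viehweg vanishing applied through $\mu$, together with the generic vanishing theorem for such bundles (this is exactly the situation handled by Hacon and refined in \cite{DH}, \cite{J}, \cite{M}: pushforwards of $K_{X'}+\Delta+(\text{nef})$ under a map to an abelian variety are GV, and the strictly positive part forces IT$^0$), one concludes $L\otimes\cI_C$ is IT$^0$. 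The positivity of the big part $(1-c)\mu^*L$ is what upgrades GV to IT$^0$ here. When $D\sim_{\mathbb Q}L$, the same computation gives $\mu^*L + K_{X'/A} - \lfloor\mu^*D\rfloor + E \sim_{\mathbb Q} K_{X'} + E + \{-\mu^*D\}$ with no leftover positive part, so the best one can hope for is the GV property; this again follows from the generic vanishing package for $(X', E+\{-\mu^*D\})$, a log canonical (indeed klt away from $E$) pair, pushed to $A$.

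Third, the one point requiring care — and the main obstacle — is verifying the identification $L\otimes\cI_C \simeq \mu_*(\mu^*L + K_{X'/A} - \lfloor\mu^*D\rfloor + E)$, i.e. that adding back the reduced divisor $E$ over the chosen log canonical centers produces exactly $\cI_C$ and not something smaller or with extra components. This is the adjoint-ideal-type computation: locally near a center $W\subseteq C$ one has, for the divisor $E_j$ dominating it, that $\mu_*\cO_{X'}(\lceil\sum a_iE_i\rceil + E_j)$ agrees with $\cI_W$ because the multiplier ideal already cuts out the non-klt locus and the log canonical centers are exactly where $a_i=-1$; the subtlety is overlapping centers and non-reducedness, which is why one needs the log resolution chosen so the relevant $E_j$ are well-separated and why one takes $C$ to be a \emph{union of log canonical centers} rather than an arbitrary subscheme. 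Once this identification is in place, the generic vanishing input is essentially quoted from Hacon's work and \cite{DH} (as already invoked in the proof of Proposition \ref{set-up}), and one should also note that when $C$ is all of $A$'s non-klt locus this recovers statements already used; the new content is allowing an arbitrary subunion $C$, for which the summand property of the pushforward — and hence preservation of GV/IT$^0$ under passing to the summand — is the crucial structural fact.
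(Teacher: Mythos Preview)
Your overall architecture matches the paper's: take a log resolution, express $L\otimes\cI_C$ as the pushforward of a line bundle of the shape $K_{X'}+(\text{snc boundary})+(\text{semiample, possibly big})$, and invoke the appropriate vanishing. But the identification step---which you correctly flag as the main obstacle---is carried out with the wrong sign.

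You set $E=\sum_{j\in S}E_j$ to be the divisors lying over the \emph{chosen} centers $C$ and claim
\[
\cI_C \;=\; \mu_*\cO_{X'}\bigl(K_{X'/A}-\lfloor\mu^*D\rfloor+E\bigr).
\]
Compute what this actually is. Writing $K_{X'}+E_1+\cdots+E_n+E^{<1}=\mu^*D$ with $E_1,\dots,E_k$ the discrepancy-$(-1)$ divisors over $C$ and $E_{k+1},\dots,E_n$ those over the remaining lc centers, one has $K_{X'}-\lfloor\mu^*D\rfloor=-E_1-\cdots-E_n-\lfloor E^{<1}\rfloor$. Adding back your $E=E_1+\cdots+E_k$ gives $-E_{k+1}-\cdots-E_n-\lfloor E^{<1}\rfloor$, whose pushforward is the ideal of the lc centers \emph{not} in $C$, not $\cI_C$. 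In other words, starting from the multiplier ideal (which cuts out the whole non-klt locus) and ``adding back'' divisors relaxes the ideal there; to isolate $\cI_C$ you must add back the divisors over the \emph{complement} of $C$, namely $E_{k+1}+\cdots+E_n$. Equivalently, as the paper does, start from $\cO_{X'}$ and subtract only $E_1+\cdots+E_k$ (plus the harmless effective exceptional term $-\lfloor E^{<1}\rfloor$):
\[
L\otimes\cI_C \;=\; \mu_*\bigl(\mu^*L-E_1-\cdots-E_k-\lfloor E^{<1}\rfloor\bigr).
\]
With this correction your $\mathbb Q$-linear equivalence becomes $K_{X'}+E_{k+1}+\cdots+E_n+\{E^{<1}\}+(1-c)\mu^*L$, and now the boundary is genuinely snc with coefficients in $[0,1]$, so Fujino's vanishing \cite[Theorem~3.16.3]{F} yields IT$^0$ directly when $c<1$.

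For the case $D\sim_{\mathbb Q}L$, you appeal to a general ``generic vanishing package'' for lc pairs mapped to abelian varieties. That is defensible, but the paper instead gives a cleaner reduction: pull back by the multiplication map $\pi_M\colon A\to A$ and tensor with $L$; then $\pi_M^*(L\otimes\cI_C)\otimes L$ again has a strictly positive leftover and is IT$^0$ by the first case, whence $L\otimes\cI_C$ is GV by Hacon's criterion \cite{H3}. This avoids invoking the heavier lc generic vanishing machinery.
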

 \begin{proof}Fix a log resolution $\rho: X\rightarrow A$ of $(A, D)$. We write
 \begin{eqnarray}\label{discrepancy}K_X+E_1+\cdots+E_n+E^{<1}=\rho^*D,\end{eqnarray} where $-\lfloor E^{<1}\rfloor $ is an effective $\mu$-exceptional divisor. We may assume that $E_1,\ldots, E_k$ are the divisors over irreducible component of $C$ with discrepancy $-1$. Note that $L\otimes\cI_C=\rho_*(\rho^*L(-E_1-\cdots-E_k-\lfloor E^{<1}\rfloor)$.

 In the first case, $$\rho^*L-E_1-\cdots-E_k-\lfloor E^{<1}\rfloor\sim_{\mathbb Q}K_X+E_{k+1}+\cdots+E_n+\{E^{<1}\}+(1-c)\rho^*L,$$ we then conclude by \cite[Theorem 3.16.3]{F} that $L\otimes\cI_C$ is IT$^0$.

 In the second case, by the same argument as above, we have $\pi_M^*(L\otimes\cI_C)\otimes L$ is IT$^0$, where $\pi_M: A\rightarrow A$ is the multiplication-by-M map and thus by Hacon's theorem (see the main theorem of \cite{H3} or \cite[Theorem 5.2]{JP}), $L\otimes\cI_C$ is GV.
 \end{proof}

\begin{coro}\label{g>d} Let $(A, L)$ be an indecomposable polarized abelian of dimension $g$ and degree $d$.
 \begin{itemize}
\item If $g\geq d$ and  $\cJ^+_D=\cI_Z$ for some $Z\neq \emptyset$, $2\leq \chi(A, L|_V)\leq d-2$.
\item If $g>d$ and   $\cA_D$ or $\cJ_D$ is $\cI_Z$ for some $Z\neq \emptyset$,  $2\leq \chi(A, L|_Z)\leq d-1$.
 \end{itemize}

\end{coro}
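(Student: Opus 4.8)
The plan is to play the generic-vanishing estimates of this section against the short exact sequence $0\to L\otimes\cI_Z\to L\to L|_Z\to 0$. Since $L$ is IT$^0$ with $\chi(A,L)=d$, this sequence gives $\chi(A,L|_Z)=d-\chi(A,L\otimes\cI_Z)$. By Proposition \ref{set-up}, $L\otimes\cI_Z$ is IT$^0$ in the first case and M-regular in the second, and, as noted before that proposition, $\codim_A Z\geq 2$ in every case. Thus the lower bound $\chi(A,L|_Z)\geq 2$ will come from Lemma \ref{not1} once I know $Z$ is not a reduced point, while the upper bound on $\chi(A,L|_Z)$ will come from a lower bound on $\chi(A,L\otimes\cI_Z)$.

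The first step is to rule out that $Z$ is a reduced point. If it were, then $\codim_A Z=g$ and $\chi(A,L|_Z)=1$, hence $\chi(A,L\otimes\cI_Z)=d-1$. In the first case $L\otimes\cI_Z$ is IT$^0$, so Proposition \ref{pareschi} gives $g=\codim_A Z\leq\chi(A,L\otimes\cI_Z)=d-1$, contradicting $g\geq d$. In the second case, if $L\otimes\cI_Z$ happens to be IT$^0$ the same argument gives $g\leq d-1$, contradicting $g>d$; and if $L\otimes\cI_Z$ is M-regular but not IT$^0$, then $V^i(L\otimes\cI_Z)\neq\emptyset$ for some $i>0$, so Proposition \ref{codim} gives $g=\codim_A Z\leq\chi(A,L\otimes\cI_Z)+1=d$, again contradicting $g>d$. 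Hence $Z$ is not a reduced point, and Lemma \ref{not1} yields $2\leq\chi(A,L|_Z)\leq d-1$; this already settles the second case.

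It remains to sharpen the upper bound to $d-2$ in the first case. There $L\otimes\cI_Z$ is IT$^0$, so Proposition \ref{pareschi} tells us that every irreducible component $Z_i$ of $Z$, including the embedded ones, has $\codim_A Z_i\leq\chi(A,L\otimes\cI_Z)$; combined with $\codim_A Z\geq 2$ this forces $\chi(A,L\otimes\cI_Z)\geq 2$, and therefore $\chi(A,L|_Z)=d-\chi(A,L\otimes\cI_Z)\leq d-2$.

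The mathematical content is modest — this is essentially bookkeeping on top of Lemma \ref{not1}, Proposition \ref{pareschi}, and Proposition \ref{codim} — and the only point requiring care, which I would treat as the crux, is that Proposition \ref{pareschi} is available only when $L\otimes\cI_Z$ is IT$^0$; in the second case one must therefore split according to whether $L\otimes\cI_Z$ is IT$^0$ or not and invoke Proposition \ref{codim} in the remaining subcase. One should also observe that $\codim_A Z\geq 2$ indeed controls all the relevant components, including embedded ones, since in a subscheme all of whose minimal components have codimension $\geq 2$ every embedded component has codimension $\geq 3$.
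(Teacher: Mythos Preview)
Your argument is correct. In the first bullet it coincides with the paper's proof up to reordering: the paper first records $2\le\chi(L\otimes\cI_Z)\le d-1$ via Proposition~\ref{pareschi} and $\chi(L\otimes\cI_Z)>0$, then rules out $\chi(L\otimes\cI_Z)=d-1$ using Lemma~\ref{not1} together with Proposition~\ref{pareschi}, exactly as you do.

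For the second bullet you take a genuinely different, and arguably cleaner, route. The paper excludes the case that $Z$ is a reduced point by observing concretely that, when $Z=\{z\}$, one has $\dim V^1(L\otimes\cI_Z)=\dim\mathrm{Bs}|L|\ge g-d$ (via the identification of $V^1(L\otimes\mathfrak m_z)$ with a translate of the base locus through the isogeny $\varphi_L$), and then invoking Lemma~\ref{DH-lemma} to get $\dim Z\ge g-d\ge 1$, a contradiction. You instead bypass this explicit description by splitting on whether $L\otimes\cI_Z$ is IT$^0$ and applying Proposition~\ref{pareschi} or Proposition~\ref{codim} accordingly. Since Proposition~\ref{codim} is itself a consequence of Lemma~\ref{DH-lemma}, the two arguments rest on the same ingredient; yours is more uniform and avoids the base-locus computation, while the paper's gives a slightly more geometric picture of what $V^1$ actually is.
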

\begin{proof}In the first case, since $L\otimes\cI_V$ is IT$^0$ and $V$ has codimension $\geq 2$, we have $2\leq \chi(L\otimes\cI_V)\leq d-1$. If $\chi(L\otimes\cI_V)=d-1$, by Lemma \ref{not1},  the only possibility is that $V$ is a reduced point and we get a contradiction by Proposition \ref{pareschi}. In the second case, when $Z$ is a reduced point, $\dim V^1(L\otimes\cI_Z)=\dim \mathrm{Bs}|L|\geq g-d$ and we get a contradiction by Lemma \ref{DH-lemma}.
\end{proof}
%We have a slight extension of Proposition \ref{codim}.\begin{lemm}Let $(A, L)$ be a polarized abelian variety. Assume that $D\sim_{\mathbb Q}L$ is a boundary divisor and $(A, D)$ is a log canonical pair. For any minimal lc center $V$ of $(A, D)$, we have $$\dim V\geq g-\chi(L\otimes\cI_V)-1.$$\end{lemm}\begin{proof}We apply a similar argument as the one in Lemma \ref{DH-lemma}. We already know that $L\otimes\cI_V$ is a GV sheaf and its cohomological support loci is a union of torsion translates of abelian subvarieties of $\Pic^0(A)$. Let $-Q+\PB$ be an irreducible component of $V^i(L\otimes \cI_V)$.\end{proof}
 \section{General polarized abelian varieties $(A, L)$ with $\dim A\geq \deg L$}
The goal of this section is to prove the following result.
 \begin{theo}\label{general-lc} Let $(A, L)$ be a  polarized abelian variety of dimension $g$ and degree $d$. There exists a  constant $M$ depending only on $g$ and $d$ such that
  \begin{itemize}
  \item[(A)] if $g\geq d$ and there exists an effective boundary $\mathbb Q$-divisor $D\sim_{\mathbb Q}L$ such that $(A, D)$ is not log canonical, there exits an abelian subvariety $K$ of dimension $\geq g-d+1$ such  that $h^0(L|_K)\leq M$;
  \item[(B)] if $g>d$ and there exists an effective $\mathbb Q$-divisor $D\sim_{\mathbb Q}L$ with $\lfloor D\rfloor=0$ such that $(A, D)$ is not Kawamata log terminal or there exists a prime divisor $D\sim L$ such that $D$ does not have rational singularities, there exits an abelian subvariety $K$ of dimension $\geq g-d$ such  that $h^0(L|_K)\leq M$.
  \end{itemize}
 \end{theo}

 We  remark that $(A)$ is an extension of Corollary \ref{non-lc}. By the following lemma,  $(B)$ implies that Pareschi's theorem \cite[Theorem 1]{P} holds for general polarized abelian subvarieties in the moduli space of $(A, L)$.

\begin{lemm}\label{exponent}
Let $(A, L)$ be a polarized abelian variety. Let $K$ be an abelian subvariety and let $B$ be the Poincar\'e complement of $K$ with respect to $L$. Then $e(L|_B)$ divides $e(L)e(L|_K)$.
\end{lemm}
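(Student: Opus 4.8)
The plan is to argue directly with the polarization isogeny $\phi_L\colon A\to\widehat A$. Recall that for an abelian subvariety $Y\subseteq A$ with inclusion $\iota_Y$ one has $\phi_{L|_Y}=\widehat{\iota_Y}\circ\phi_L\circ\iota_Y\colon Y\to\widehat Y$, and that $e(L)$ (resp.\ $e(L|_Y)$) is by definition the exponent of the finite group $\ker\phi_L$ (resp.\ $\ker\phi_{L|_Y}$). The property of the Poincar\'e complement $B$ of $K$ that I will use is that the addition map $K\times B\to A$ is an isogeny — so $K+B=A$ and $K\cap B$ is finite — and that $\widehat{\iota_K}\circ\phi_L\circ\iota_B=0$, i.e.\ $\phi_L(B)\subseteq\ker\widehat{\iota_K}$; this follows from the vanishing of the norm endomorphism $N_K$ on $B$. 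Dualizing and using $\widehat{\phi_L}=\phi_L$ gives likewise $\phi_L(K)\subseteq\ker\widehat{\iota_B}$.

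The first step is the identity $\ker\widehat{\iota_B}=\phi_L(K)$. The inclusion $\supseteq$ has just been noted; conversely, applying duality to $0\to B\to A\to A/B\to 0$ shows $\ker\widehat{\iota_B}\cong(A/B)^\vee$ is a connected abelian subvariety of $\widehat A$ of dimension $\dim A-\dim B=\dim K$, and since $\phi_L$ is an isogeny, $\phi_L(K)$ is a connected subgroup of the same dimension, hence equal to it. (This connectedness is precisely what keeps an extra factor out of the final divisibility.)

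For the last step, take any $x\in\ker\phi_{L|_B}$; then $x\in B$ and $\phi_L(x)\in\ker\widehat{\iota_B}=\phi_L(K)$, so $\phi_L(x)=\phi_L(k)$ for some $k\in K$, whence $x-k\in\ker\phi_L$ and $e(L)x=e(L)k$ lies in $K$; as it also lies in $B$, we get $e(L)x\in K\cap B$. Moreover $K\cap B\subseteq\ker\phi_{L|_K}$: for $w\in K\cap B$ we have $w\in B$, so $\phi_L(w)\in\phi_L(B)\subseteq\ker\widehat{\iota_K}$ and hence $\phi_{L|_K}(w)=\widehat{\iota_K}\phi_L(w)=0$. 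Therefore $e(L|_K)e(L)\,x=e(L|_K)\bigl(e(L)x\bigr)=0$ for every $x\in\ker\phi_{L|_B}$, so the integer $e(L)e(L|_K)$ annihilates $\ker\phi_{L|_B}$; this is exactly $e(L|_B)\mid e(L)e(L|_K)$. The only inputs beyond bookkeeping with exponents of finite groups are the translation of the definition of the Poincar\'e complement into the vanishing $\widehat{\iota_K}\circ\phi_L\circ\iota_B=0$ and the exactness of dualization used to get the connectedness of $\ker\widehat{\iota_B}$; I expect the former to require the most care, and a cruder variant pulling $L$ back along $K\times B\to A$ (where $\mu^*L\equiv(L|_K)\boxtimes(L|_B)$ and $\ker\mu\cong K\cap B$) would only yield $e(L|_B)\mid e(L)e(L|_K)^2$, so I would use the argument above.
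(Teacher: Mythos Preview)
Your proof is correct but follows a genuinely different route from the paper's. The paper reduces to the principally polarized case: it chooses an isogeny $\pi\colon A\to A'$ with $L=\pi^*L'$ for a principal polarization $L'$, notes that $B'=\pi(B)$ is the Poincar\'e complement of $K'=\pi(K)$ with respect to $L'$, and invokes \cite[Corollary 12.1.2]{BL} to get $e(L'|_{K'})=e(L'|_{B'})$; the conclusion then follows from the easy divisibilities $e(L'|_{K'})\mid e(L|_K)$ and $e(L|_B)\mid e(L)\,e(L'|_{B'})$ along $\pi$. Your argument instead works directly with $\phi_L$: the identification $\ker\widehat{\iota_B}=\phi_L(K)$ and the inclusion $K\cap B\subseteq\ker\phi_{L|_K}$ let you show that $e(L)e(L|_K)$ kills $\ker\phi_{L|_B}$ without ever passing to a principal polarization. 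Your approach is more self-contained (no black-box citation needed), while the paper's is shorter once one is willing to quote the symmetry $e(L'|_{K'})=e(L'|_{B'})$ in the principal case. The one step you flag as delicate, the vanishing $\widehat{\iota_K}\circ\phi_L\circ\iota_B=0$, is indeed immediate from $B=(\ker N_K)_0$: since $N_K=\iota_K\circ\psi_{L|_K}\circ\widehat{\iota_K}\circ\phi_L$ vanishes on $B$ and $\iota_K$ is injective while $\psi_{L|_K}$ has finite kernel, connectedness of $B$ forces $\widehat{\iota_K}\circ\phi_L\circ\iota_B=0$.
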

 \begin{proof}
 We first fix an isogeny $\pi: A\rightarrow A'$ such that there exists a principal polarization $L'$ on $A'$ and $L=\pi^*L'$. Let $K'=\pi(K)$. Then it is easy to verify that $B':=\pi(B)$ is the Poincar\'e complement of $K'$ with respect to $L'$. By \cite[Corollary 12.1.2]{BL}, $e(L'|_{K'})=e(L'|_{B'})$. Since $e(L'|_{K'})$ divides $e(L|_K)$ and $e(L|_B)$ divides $e(L)e(L'|_{B'})$, we conclude the proof.
 \end{proof}
 The proofs of $(A)$ and $(B)$ are almost identical. Let's first prove $(A)$ and point out the necessary modifications of arguments to prove $(B)$.

 We also remark that the constant $M$ is explicit but our computation in the proof is certainly not optimal. As we can see in the next sections, the restriction should be much more strict.
 \subsection{The proof of $(A)$}
 \begin{proof}

  Let $0<c=\mathrm{lct}(A, D)<1$, $(A, cD)$ is log canonical. Let $Z$ be a minimal log canonical center of $(A, cD)$. We know that $Z$ is normal with at worst rational singularities (see \cite{Kaw}).
 After a small perturbation of $D$ (see \cite[Proposition 8.7.1]{Kol4}), we may assume that $\cI_Z=\cJ(cD)$.

   Moreover, by Proposition \ref{components}, $L\otimes \cI_Z$ is IT$^0$. We denote by $\chi=\chi(L\otimes\cI_Z)$. Note that $2\leq \chi \leq d-2$ and $\dim Z\geq g-\chi\geq 2$ by Proposition \ref{pareschi} and Corollary \ref{g>d}.

We now apply the argument of Debarre and Hacon in \cite{DH}.
  Let $J\subset A\times |L|$ be the incident variety $$\{(a, s)\in A\times |L|\mid s|_{a+Z}=0\}.$$ Note that the projection $p: J\rightarrow A$ is surjective and $J$ is indeed a $\mathbb P^{\chi-1}$-bundle over $A$.
  Let $q: J\rightarrow |L|$ be the natural projection. For $s\in q(J)$ general, we denote by $J_s$ the fiber over $s$ and $D_s$ the divisor corresponding to $s$. By definition, \begin{eqnarray}\label{j}J_s=\{a\in A\mid a+Z\subset D_s\}.\end{eqnarray}

Let $J_s^i$ be an irreducible component of $J_s$.
  We consider the addition morphism:
  $$\mu: J_s^i\times Z\rightarrow V_s:=J_s^i+Z\subset D_s\subset A.$$

  Since $s\in q(J)$ general, $\dim J_s^i=\dim J_s\geq g+\chi-d$ and $\dim (J_s^i+Z)\leq \dim D_s\leq g-1$, a general fiber of $\mu: J_s^i\times Z\rightarrow V_s$ is of dimension $\geq g-d+1$. Let $t\in V_s$ be general and denote by $F_t$ the corresponding fiber of $\mu$. We denote by $Z_t$ the image of the projection from $F_t$ to $Z$.  Observe that $$F_t=\{(t-z, z)\mid z\in Z_t\}.$$ We then conclude by Lemma \ref{fibred} that $Z_t$ generates an abelian subvariety $K$ of dimension $\geq g-d+1\geq 2$ and $V_s$ is fibred by $K$, i.e. $K+V_s=V_s$.

We remark that $F_t$ or $Z_t$ may be reducible. After taking the normalizations $\overline{J_s^i}$ of $J_s^i$ and $\overline{J_s^i\times Z}$ of $J_S^i\times Z$ and considering   the Stein factorization
\begin{eqnarray*}\xymatrix{
\overline{J_s^i}\times Z\ar[dr]\ar[r] &\overline{J_s^i+Z}\ar[r] &J_s^i+Z\\
& J_s^i\times Z,\ar[ur]&&}
 \end{eqnarray*}we see that the normalization of each irreducible component of $F_t$ deforms to each other and hence each irreducible component of $Z_t$ generates the same abelian variety $K$.

   Considering the quotient $\pi: A\rightarrow A/K$. We denote respectively $\hat{J_s^i}$, $\hat{Z}$, and $\hat{V_s}$ the images of $J_s^i$, $Z$, and $V_s$ in $A/K$. By considering the addition morphism $\hat{\mu}$ on $A/K$, we have the commutative diagram:
  \begin{eqnarray*}
  \xymatrix{
  J_s^i\times Z\ar[r]^{\mu} \ar[d]_{\pi|_{J_s^i}\times \pi|_Z} & V_s\ar[d]\ar@{^{(}->}[r]  & A\ar[d]^{\pi} \\
  \hat{J_s^i}\times \hat{Z} \ar[r]^{\hat{\mu}} & \hat{V_s}\ar@{^{(}->}[r]& A/K.}
  \end{eqnarray*}

  By (\ref{j}), $J_s^i=\pi^{-1}(\hat{J_s^i})$. Moreover, since $F_t$ is a general fiber of $\mu$, $\hat{\mu}$ is generically finite. We then conclude that $\hat{Z}$ is not fibred by positive-dimensional tori of $A/K$.
  Indeed, if $\hat{Z}$ is fibred by a positive-dimensional abelian tori $B$, so is $\hat{V_s}$. Then $V_s$ is fibred by $\pi^{-1}(B)$ and so is $J_s^i$. Then both $\hat{J_s^i}$ and $\hat{Z}$ are fibred by $B$, which implies that $\hat{\mu}$ is not generically finite.

 Let $n=\dim Z$.  We  claim that $(L^{n}\cdot Z)\leq (L^g)=g!d$. Indeed, since $L\otimes \cI_Z$ is IT$^0$, it is continuously globally generated. Thus, $$\bigcap_{P\in \Pic^0(A)}\mathrm{Bs}(|L\otimes \cI_Z\otimes P|)=Z.$$ Inductively, there exist divisors $D_i\equiv L$, for $1\leq i\leq g-n$ such that $D_1\cap D_2\cdots\cap D_{g-n}$ is of pure dimension $n$ and contains $Z$ as an irreducible component. Hence $$(L^n\cdot Z)\leq (L^g)=g!d.$$

 We claim that $(L^{\dim K}\cdot K)$ is bounded by a constant depending only on $g$ and $d$. We have the following commutative diagram
 \begin{eqnarray*}
 \xymatrix{
 Z\ar@{^{(}->}[r]\ar[d]^{p_Z}&A\ar[d]
\\
  \hat{Z}\ar@{^{(}->}[r]& A/K.}
 \end{eqnarray*}
 By Theorem \ref{adjunction}, $$L|_{Z}\sim_{\mathbb Q}K_{Z}+(\text{effective}\; \mathbb Q\text{-divisors}).$$ By Viehweg's weakly positivity (see \cite{V}), $K_{Z}/K_{\hat{Z}}$ is pseudo-effective \footnote{In order to apply Viehweg's theorem, we need  that $Z$ and $\hat{Z}$ are smooth. However, it is easy to verify that it is harmless to assume that both varieties are smooth in our arguments.}. Since $\hat{Z}$ is not fibred by abelian subvarieties, it is of general type. By Fujita's approximation (see \cite[Theorem 11.4.4]{Laz2}), there exists an $\mathbb Q$-ample divisor $H$ on $\hat{Z}$ such that $K_{\hat{Z}}-H$ is $\mathbb Q$-effective and $0<\vol(K_{\hat{Z}})-(H^{\dim \hat{Z}})\ll 1$. Then $L|_{Z}-p_Z^*H$ is also pseudo-effective. Note that a general fiber of $p_Z$ is exactly $Z_t$. We then have
 \begin{eqnarray*}
 (L|_{Z})^n > ((L|_{Z})^{\dim Z_t}\cdot (p_Z^*H)^{\dim \hat{Z}})= (H^{\dim \hat{Z}})(L|_K^{\dim Z_t}\cdot Z_t).
 \end{eqnarray*}
 Thus $$g!d\geq (L^n\cdot Z)\geq \vol(K_{\hat{Z}})(L|_K^{\dim Z_t}\cdot Z_t).$$
 We also know that $\vol(K_{\hat{Z}})\geq  2(\dim \hat{Z})!$ by Severi's inequality (see \cite{Bar} and \cite{Zh}). Therefore, $$(L|_K^{\dim Z_t}\cdot Z_t)\leq \frac{g!d}{2(\dim \hat{Z})!}.$$

Note that an irreducible component of $Z_t$ generates $K$ and $2L|_K$ is globally generated. For $H_1,\ldots, H_{\dim Z_t-1}\in |2L|_K|$ general, an irreducible component $C$ of $Z_t\cap H_1\cdots\cap H_{\dim Z_t-1}$ is an irreducible curve generating  $K$. By \cite[Proposition 4.1]{D3} $$\big(\frac{(L|_K\cdot C)}{\dim K}\big)^{\dim K}\geq h^0(K, L|_K).$$ Since $$(L|_K\cdot C)\leq \frac{2^{\dim Z_t-1}g!d}{(\dim \hat{Z})!},$$ $h^0(L|_K)$ is bounded by a constant depending only on $g$ and $d$.
\end{proof}
\subsection{The proof of $(B)$}
\begin{proof}
By $(A)$, we may assume that $(A, D)$ is log canonical.  We denote by $\cI_Z$ the multiplier ideal sheaf $\cJ(D)$ when $D\sim_{\mathbb Q} L$ is a $\mathbb Q$-divisor such that $\lfloor D\rfloor=0$ and $(A, D)$ is not Kawamata log terminal or the adjoint ideal sheaf $\mathrm{adj}(A, D)$ when $D\in |L|$ is a prime divisor, which  does not have canonical singularities.

  In either case, $Z$ is reduced and each irreducible component is a log canonical center of $(A, D)$.
By Proposition \ref{set-up}, $L\otimes\cI_Z$ is M-regular and by Corollary \ref{g>d}, $1\leq \chi(L\otimes\cI_Z)\leq d-2$.   Let $Z_1$ be an irreducible component of $Z$ of maximal dimension. We have that $L\otimes\cI_{Z_1}$ is GV by Proposition  \ref{components}. Moreover, since $L\otimes \cI_Z\subset L\otimes\cI_{Z_1}\subset L$,
$$1\leq \chi(L\otimes\cI_Z)\leq \chi:= \chi(L\otimes\cI_{Z_1}).$$
We also know by Proposition \ref{codim} that $\dim Z_1=\dim Z\geq g-\chi(L\otimes \cI_Z)-1\geq g-\chi-1$.

We then consider the subscheme $$\cJ:=\{(a, s)\in A\times |L| \mid s|_{a+Z_1}=0\}.$$ Let $J$ be the unique component of $\cJ$ dominating $A$ with reduced scheme structure. We denote respectively by $p$ and $q$ the projections from $J$ to $A$ and $|L|$. We also know that $\dim J=g+\chi-1$.

For $a\in A$ general, we then denote by $J_s^i$ an irreducible component of a fiber of $J\rightarrow q(J)$ over $s\in q(J)$ such that $a\in J_s^i$. Then $\dim J_s^i\geq g+\chi-d$ and $$J_s^i\times Z\rightarrow V_s:=J_s^i+Z\subset D_s,$$ where $D_s\subset A$ is the corresponding divisor.  Since $a\in A$ general, we know that $J_s^i$ is an irreducible component of $$\{z\in A\mid z+Z\subset D_s\}$$ through $a$.

Then we apply exactly the same argument of the proof of $(A)$. One difference is that $L\otimes\cI_{Z_1}$ may not be M-regular and thus may not be continuously globally generated. However, since $L\otimes \cI_Z$ is M-regular, there still  exist divisors $D_i\equiv L$, for $1\leq i\leq g-\dim Z_1$ such that $D_1\cap D_2\cdots\cap D_{g-\dim Z_1}$ is of pure dimension $\dim Z_1$ and contains $Z_1$ as an irreducible component. Another difference is that $Z_1$ may not be normal. We thus need to consider the normalization of $Z_1$ while applying Theorem \ref{adjunction}.
  \end{proof}
\section{Divisors of degree $3$}
\begin{theo}\label{lc}Let $(A, L)$ be an indecomposable polarized abelian variety of degree $d=3$ of dimension $g\geq 3$. Fix an effective $\mathbb Q$-divisor $D\sim_{\mathbb Q}L$, if the pair $(A, D)$ is not log canonical,   there exits a fibration $p: A\rightarrow E$ to an elliptic curve such that $D=a_1p^*x+D_2$, where $x\in E$ is a point, $a_1>1$, and $D_2$ is an effective $\mathbb Q$-divisor.
\end{theo}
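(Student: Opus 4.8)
The plan is to run the same machinery developed in Section~6 (the proof of Theorem~\ref{general-lc}) but exploit the very small degree $d=3$ to pin down the geometry completely. Suppose $(A,D)$ is not log canonical. Set $c=\mathrm{lct}(A,D)<1$, so $(A,cD)$ is log canonical; let $Z$ be a minimal log canonical center of $(A,cD)$, which is normal with rational singularities by Kawamata. After a small perturbation we may assume $\cI_Z=\cJ(cD)$, and by Proposition~\ref{components} the sheaf $L\otimes\cI_Z$ is IT$^0$. Write $\chi=\chi(L\otimes\cI_Z)$. By Corollary~\ref{g>d}, $2\le\chi\le d-2=1$, which is already a contradiction \emph{unless} the hypotheses of that corollary fail --- i.e.\ unless we are \emph{not} in the indecomposable/non-point situation. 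So the first step is to re-examine: the estimate $\chi\le d-2$ in Corollary~\ref{g>d} came from ruling out $Z$ a reduced point via Proposition~\ref{pareschi}; here the only escape is precisely that $Z$ \emph{is} a reduced point, with $\chi(L\otimes\cI_Z)$ possibly as large as $d-1=2$, but then $\codim_A Z=g\le\chi\le 2$ forces $g\le 2$, contradicting $g\ge 3$. Hence in fact no minimal log canonical center can be a point either. The conclusion of this bookkeeping is that $D$ itself cannot be a boundary divisor: some component $D_1$ of $D$ has coefficient $a_1>1$.

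Given that, write $D=a_1D_1+D'$ with $a_1>1$ and $D_1$ prime. By Corollary~\ref{non-lc} (applied with $L=D_1+(L-D_1)$, noting $L-D_1\sim_{\mathbb Q}D-D_1$ is ample), $D_1$ is not ample: its numerical dimension $g_1$ satisfies $g_1\le g-1$, and $h^0(L|_{K_1})\le d-g_1\le g-g_1=\dim K_1$, where $K_1=(\varphi_{D_1})_0$ and $D_1=p_1^*H_1$ for the quotient $p_1\colon A\to B_1:=A/K_1$. Now the degree-$3$ constraint bites. Since $(A,L)$ is indecomposable, $h^0(L|_{K})\ge 2$ for every positive-dimensional abelian subvariety $K$; combined with $h^0(L|_{K_1})\le d-g_1\le 3-g_1$ this forces $g_1\le 1$, hence $g_1=1$: $D_1=p^*x$ is the pullback of a point under a quotient $p\colon A\to E$ to an elliptic curve (after checking $B_1$ is an elliptic curve and the fibers can be taken connected, the usual Stein-factorization reduction). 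Setting $D_2:=D-a_1D_1=D'$, which is visibly an effective $\mathbb Q$-divisor, gives exactly the asserted form $D=a_1p^*x+D_2$ with $a_1>1$.

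The main obstacle will be the first paragraph: cleanly deriving that $D$ must fail to be a boundary. The cheap route is to quote Corollary~\ref{non-lc} directly, which already says that non-lc for a boundary-free statement forces a proper abelian subvariety $K$ with $h^0(L|_K)\le\dim K$ --- but that corollary is stated for $D$ \emph{not} a boundary, so one first needs that a genuinely non-log-canonical $D\sim_{\mathbb Q}L$ on an indecomposable $(A,L)$ of degree $3$ cannot be a boundary. That is the content of forcing $\chi\in\{2,\dots,d-2\}=\varnothing$ via Corollary~\ref{g>d} and Proposition~\ref{pareschi}; one has to be slightly careful that Corollary~\ref{g>d}'s first bullet is stated for $\cJ^+_D$ whereas here we have $\cJ(cD)=\cI_Z$ with $Z$ a minimal lc center, so I would instead invoke that $2\le\chi(L\otimes\cI_Z)\le d-2$ follows from Lemma~\ref{not1} (giving $\chi\le d-1$, with equality only if $Z$ is a point) together with Proposition~\ref{pareschi} ($\codim_A Z\le\chi$, ruling out a point once $g\ge 3$). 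Once $D$ is not a boundary, everything reduces to the elementary numerical analysis of the reducible/non-boundary decomposition, which is routine and essentially contained in Corollary~\ref{non-lc} and Lemma~\ref{DH-d=3}.
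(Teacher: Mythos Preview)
Your proposal is correct and follows essentially the same two-step strategy as the paper: first show that a boundary $D$ must be log canonical via the IT$^0$/numerical bounds, then classify non-boundary $D$ via the reducible-divisor analysis. The paper's execution is more direct, however: for step one it works with $\cJ^+_D$ itself (so Corollary~\ref{g>d} applies verbatim and immediately yields the empty range $2\le\chi(L|_Z)\le d-2=1$), rather than passing to a minimal lc center of $(A,cD)$ and re-deriving the bounds from Lemma~\ref{not1} and Proposition~\ref{pareschi} as you do; and for step two it simply observes that since $L-D_1$ is ample one lands in case~(2) of Lemma~\ref{DH-d=3}, rather than redoing the numerics of Corollary~\ref{non-lc}.
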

\begin{proof}
We first claim that for any boundary $\mathbb Q$-divisor $D\sim_{\mathbb Q} L$, $\cJ_D^+=\cO_A$ and hence $(A, D)$ is log canonical. Let $\cI_Z=\cJ^+_D$. If $\cI_Z\neq \cO_X$, $Z$ is a subscheme of codimension $\geq 2$. However, $L\otimes\cI_Z$ is IT$^0$ by Proposition \ref{set-up} and by Lemma \ref{g>d}, $2\leq \chi(L|_Z)\leq 1$, which is impossible.

 We now classify the $\mathbb Q$-divisors $D\sim_{\mathbb Q}L $ which are not boundary divisors.
We write $D=a_1D_1+D_2$, where $D_1$ is a prime divisor, $a_1>1$, and $D_2$ is an effective $\mathbb Q$-divisor. Then $L-D_1\sim_{\mathbb Q}(a_1-1)D_1+D_2$ is ample. Since an ample divisor on an abelian variety is always linearly equivalent to an effective divisor, there exists an integral reducible divisor $D'=D_1+D_2'\sim L$ and $D_2'$ is ample.
Thus by Lemma \ref{DH}, we are in the second cases.

\end{proof}

\begin{theo}\label{klt-d=3}Let $(A, L)$ be an indecomposable polarized abelian variety of dimension $g>h^0(A, L)=3$. Then
 \begin{itemize}\item a prime divisor $D\in |L|$ is normal and has rational singularities;
 \item for an effective $\mathbb Q$-divisor $D\sim_{\mathbb Q}L$ such that $\lfloor D\rfloor=0$, if $(A,  D)$ is not Kawamata log terminal, there exists an abelian subvariety $K$ of $A$ of dimension $g-2$ such that $\chi(L|_K)=2$ and $\cJ_{D}$ is the ideal sheaf of a translate of $K$. Moreover, if each irreducible component of $D$ is ample, $(A, D)$ is Kawamata log terminal.
 \end{itemize}

\end{theo}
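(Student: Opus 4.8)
The plan is to run the generic-vanishing machinery from the proof of Theorem~\ref{general-lc}(B) and to exploit that for $d=3$ the inequalities there become equalities — concretely $\chi(L\otimes\cI_Z)=1$ — which rigidifies the output. I would treat the prime divisor first, as that case turns out to be self-contained.

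Suppose $D\in|L|$ is a prime divisor that is not normal with rational singularities, so $\cA_D=\cI_Z\neq\cO_A$; then $(A,D)$ is log canonical and $Z$ has codimension $\ge2$. By Proposition~\ref{set-up}, both $L\otimes\cI_Z$ and $\mu_*\omega_{D'}$ are M-regular, where $\mu\colon D'\to D$ is the resolution occurring in a log resolution of $(A,D)$; by Corollary~\ref{g>d}, $2\le\chi(A,L|_Z)\le d-1=2$, hence $\chi(L|_Z)=2$ and $\chi(L\otimes\cA_D)=\chi(L)-\chi(L|_Z)=1$. Feeding this into the sequence (\ref{EL}), which for $X=A$ reads $0\to\cO_A\to L\otimes\cA_D\to\mu_*\omega_{D'}\to0$, gives $\chi(\omega_{D'})=1$. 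Since $D$ is prime it is not fibred by subtori, so $D'$ is of general type and of maximal Albanese dimension, and (following Pareschi's argument in \cite{P} and the characterization of theta divisors of \cite{EL} and its refinements) an M-regular pushforward $\mu_*\omega_{D'}$ with $\chi(\omega_{D'})=1$ forces $D'$ to be birational to a theta divisor of a principally polarized abelian variety isogenous to $A=\Alb(D)$, say via an isogeny $\phi$ of degree $n$. Pulling back, $\phi^*c_1(L)=n[\Theta]$, whence $n^g=\chi(\phi^*L)=n\cdot\chi(L)=3n$, i.e.\ $n^{g-1}=3$; this is impossible for $g>3$. This contradiction proves that every prime $D\in|L|$ is normal with rational singularities.

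For a boundary $\mathbb Q$-divisor $D\sim_{\mathbb Q}L$ with $\lfloor D\rfloor=0$, Theorem~\ref{lc} gives that $(A,D)$ is log canonical; put $\cI_Z=\cJ_D$, so $Z$ is a nonempty union of log canonical centers of codimension $\ge2$. As above $\chi(L|_Z)=2$ and $\chi(L\otimes\cI_Z)=1$; by Propositions~\ref{set-up} and~\ref{pareschi}, $L\otimes\cI_Z$ is M-regular but not IT$^0$, so $V^i(L\otimes\cI_Z)\ne\emptyset$ for some $i>0$, $\mathrm{gv}(L\otimes\cI_Z)=1$, and Proposition~\ref{codim} gives $\dim Z=g-2$ (with Lemma~\ref{DH-lemma} holding with equality at the index realizing the gv-index). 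I would then form, attached to a maximal-dimensional component of $Z$, the incidence variety $J\subseteq A\times|L|$ over $A$ as in the proof of Theorem~\ref{general-lc}(B); since $\chi(L\otimes\cI_Z)=1$ and $L\otimes\cI_Z$ is M-regular, $h^0(L\otimes\cI_{b+Z})=h^0(L\otimes\phi_L(b)\otimes\cI_Z)=1$ for general $b\in A$, so $p\colon J\to A$ is birational and $\dim J=g$. For general $s\in q(J)\subseteq|L|=\mathbb P^2$, a component $J_s^i$ of the fibre through a general point has $\dim J_s^i\ge g+\chi(L\otimes\cI_Z)-d=g-2=\dim Z$, and Lemma~\ref{fibred}, applied to the addition map $J_s^i\times Z\to V_s:=J_s^i+Z\subseteq D_s$, produces an abelian subvariety $K$, generated by the general fibre $Z_t$ of that map projected to $Z$, with $\dim K\ge g-3$ and with $V_s$ (hence $J_s^i$ and $Z$) fibred over $A/K$. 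The decisive step is to upgrade this to $\dim K=g-2$: if $\dim K=g-1$ then $V_s$ would be a translate of $K$, forcing $\chi(L)=\deg(\pi_*L)$ with $\pi\colon A\to A/K$ and leading to a contradiction with $h^0(L|_K)\ge2$ (indecomposability); and if $\dim K=g-3$ in the case $\dim q(J)=2$, then $q(J)=|L|$ and a general member of $|L|$ would carry a component $\pi^*(\text{divisor on }A/K)$, hence be reducible, contradicting the indecomposability of $(A,L)$ via Corollary~\ref{general-comment}; the remaining sub-cases are handled using the same circle of ideas together with the classification in Lemma~\ref{DH-d=3} and the equality case of Lemma~\ref{DH-lemma}. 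Once $\dim K=g-2$ we get $Z_t=Z$ up to translation, so $Z_{\mathrm{red}}=a+K$, and since $\chi(L|_K)=2$ (again $\chi(L|_K)=1$ would make $(A,L)$ decomposable) the scheme structure on $Z$ is reduced, so $\cJ_D=\cI_{a+K}$. This is the structural statement in the second bullet.

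Finally, for the ``moreover'': if every component of $D$ is ample but $(A,D)$ is not klt, then $a+K$ is a log canonical center, so Theorem~\ref{adjunction} produces an effective boundary $B_K\sim_{\mathbb Q}L|_K$ on $K$ (dimension $g-2\ge2$, degree $2$) with $(K,B_K)$ not klt and with ample irreducible components; since $\chi(L|_K)=2$ is prime, $B_K$ is a prime divisor, and Debarre--Hacon's degree-$2$ theorem \cite{DH} (a prime divisor in a degree-$2$ polarization of dimension $>2$ is normal with rational singularities; the case $\dim K=2$ checked directly) is contradicted, so $(A,D)$ is klt. The main obstacle is exactly the rigidity step of the previous paragraph: the general machinery only yields $\dim K\ge g-3$, and excluding the configurations where $Z$ fibres non-trivially over a positive-dimensional general-type subvariety of $A/K$ is what makes essential use of $d=3$, through $\chi(L\otimes\cI_Z)=1$ (which makes $J$ birational onto $A$ and forces equality in Lemma~\ref{DH-lemma}) and the obstruction $\chi(L)\ne0$; a secondary delicate point is making the theta-divisor characterization in the prime case fully rigorous, i.e.\ controlling the Albanese dimension of $D'$.
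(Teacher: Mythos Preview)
Your setup with the incidence variety $J$ and the reduction to $\chi(L\otimes\cI_Z)=1$, $\dim Z=g-2$ matches the paper. But your upgrade from $\dim K\ge g-3$ to $\dim K=g-2$ does not go through: for instance, when $\dim K=g-3$ the variety $V_s=J_s^i+Z$ is $K$-fibred, but $V_s$ may sit properly inside $D_s$, so this yields no reducibility of $D_s$ and your appeal to Corollary~\ref{general-comment} fails. The paper argues more directly: it first shows $q:J\to|L|=\mathbb P^2$ is surjective (ruling out $\dim q(J)=1$ via Lemma~\ref{DH-d=3}); then a general $D_s$ is prime, and since $\dim J_s+\dim Z=2(g-2)>g-1$, Lemma~\ref{fibred} forces $J_s+Z\subsetneq D_s$, hence $\dim(J_s+Z)=\dim J_s=\dim Z=g-2$, so $J_s$ and $Z$ are translates of one abelian subvariety $K$ of dimension $g-2$ immediately.

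The serious gaps are in the prime-divisor case and the ``moreover''. For the prime divisor, even if one grants a theta-divisor characterization from $\chi(\mu_*\omega_{D'})=1$, you have not identified $\Alb(D')$ with $A$, and the asserted relation $\phi^*c_1(L)=n[\Theta]$ is unfounded: the polarization $L$ on $A$ and a principal polarization on the target PPAV are a priori unrelated, so the equation $n^{g-1}=3$ does not follow. For the ``moreover'', Theorem~\ref{adjunction} only produces an \emph{effective} $B_K\sim_{\mathbb Q}L|_K$; it does not say $(K,B_K)$ is non-klt (Kawamata subadjunction in fact points the other way), nor that $B_K$ is prime or has ample components, so you cannot feed it into \cite{DH}. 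The paper handles both the prime case and the ``moreover'' by one argument you are missing: once $Z$ is (a translate of) $K$ with $h^0(L|_K)=2$, pass to the Poincar\'e-complement abelian surface $B$, pull back along the degree-$4$ isogeny $\mu:K\times B\to A$, and restrict $\mu^*D$ to two general slices $\{x\}\times B$ and $\{y\}\times B$ to get $D_1,D_2\sim L|_B$ without common components; each of the four points of $K\cap B\subset B$ is a log canonical center of both $(B,D_i)$, forcing local intersection multiplicity $\ge4$ there, whence $12=(L|_B)^2\ge16$, a contradiction.
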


\begin{proof}We argue by contradiction. Assume the contrary, $(A, D)$ is not purely log terminal (or canonical) in the first case and $(A, D)$ is not Kawamata log terminal in the second case.
Let $\cI_Z\subsetneqq \cO_A$ be the adjoint ideal $\cA_D$ in the first case and be the multiplier ideal $\cJ(D)$ in the second case.
 Moreover, in any case, we already know that the pair $(A, D)$ is log canonical by Theorem \ref{lc}, hence the subscheme $Z$ is reduced.

By Corollary \ref{g>d}, $\chi(A, L\otimes \cI_Z)=1$ and $\chi(L|_Z)=2$ and by Proposition \ref{codim} , $\dim Z=g-2$. Note that $Z$ is connected. Indeed, if $Z$ has $2$ connected components $Z_1$ and $Z_2$, since $L|_{Z_i}$ have gv-index $\geq 2$,  $\chi(L|_{Z_1})=\chi(L|_{Z_2})=1$ and $Z=Z_1\sqcup Z_2$. By Lemma \ref{not1}, both $Z_1$ and $Z_2$ are reduced points and this is a contradiction.

  Let $\cJ$ be the subvariety of $A\times |L|$ defined by the closed condition $$\{(a, s)\in A\times |L|\mid s|_{a+Z}=0\}.$$ Let $p$ and $q$ be respectively the projection from $\cJ$ to $A$ and $|L|$. For each $a\in A$, $p^{-1}(a)$ is identified with the subspace $$\mathbb PH^0(A, L\otimes \cI_Z\otimes P_{\varphi(a)}),$$ where $\varphi: A\rightarrow \Pic^0(A)$ is the isogeny induced by $L$ and $P_{\varphi(a)}$ is the corresponding line bundle on $A$. Let $J$ be the unique irreducible component of $\cJ$ which dominates $A$ via $p$. We will still use $p$ and $q$ to denote respectively the projection from $J$ to $A$ and $|L|$. Since $\chi(A, L\otimes \cI_Z)=h^0(A, L\otimes \cI_Z\otimes P)=1$ for $P\in\Pic^0(A)$ general, $p: J\rightarrow A$ is birational.

We then consider $q: J\rightarrow |L|=\mathbb P^2$. It is clear that $\dim q(J)\geq 1$.

 We claim that $q$ is surjective.

 Assume that $\dim q(J)=1$. Let $s\in q(J) $ be a general point and let $J_s$ be the corresponding fiber $J_s$. By definition, we see that $J_s+Z\subset D_s$, where $D_s$ is the divisor corresponding to $s$. Since $\dim Z=g-2>0$ and $\dim J_s=g-1$, we conclude that $D_s$ contains a component which is not ample by Lemma \ref{fibred}. Hence $|L|$ contains a one-dimensional family of reducible divisors. We are in case $(2)$ of Lemma \ref{DH}, namely there exists a quotient $p_E: A\rightarrow E$ with connected kernel $K$ such that $D_s=p_E^*{x_s}+\Theta_s$, where $x_s\in E$ is a point and $\Theta_s$ is a principal polarization.  Thus $q(J)\subset |L|$ is an elliptic curve. Since $J$ is birational to $A$, we  conclude that each connected component of $J_s$ is a translate of a fixed abelian variety $K$.  We apply the analysis of Lemma \ref{DH} to conclude that $L(-K)$ is a principal polarization. Since $Z$ is connected,  after a translation, we may assume that  $Z$ is contained in $K$.
 Then we have $L(-K) \subset L\otimes \cI_Z$. Since $L(-K)$ is ample, $L\otimes \cI_Z$ is M-regular, and
  $\chi(A, L(-K))=\chi(A, L\otimes \cI_Z)=1$, $L(-K)=L\otimes \cI_Z$, which is absurd.

Therefore we may assume that $q: J\rightarrow |L|=\mathbb P^2$ is surjective. For $s\in |L|$ general, $J_s=q^{-1}(s)$ is a subvariety of $A$ of pure dimension $g-2$, the corresponding divisor $D_s$ is a prime divisor. We note that $J_s+Z\subset D_s$. Since $\dim J_s+\dim Z>\dim D_s$ and $D_s$ is ample,  $J_s+Z$ is a proper subset of $D_s$ by \ref{fibred}. Therefore $\dim J_s=\dim Z=\dim (J_s+Z)$. Hence each connected component of $J_s$ is a translate of a fixed abelian subvariety $K$ of dimension $g-2$. Since $Z$ is reduced, after a translation, we may assume that  $Z$ is $K$. We have $h^0(K, L|_K)=2$.

We now show that any prime divisor $D\in |L|$ is normal and has rational singularities.

We fix a \'etale cover $\pi: A\rightarrow A'$ of degree $3$ from $A$ to a PPAV $(A', \Theta)$ such that $L\simeq \pi^*L'$, where $L':=\cO_{A'}(\Theta)$. Since $h^0(K, L|_K)=2$, $\pi|_K$ induces an isomorphism from $K$ to its image, which we still denote by $K$. Let $B$ and $B'$ be respectively the Poincar\'e complementary of $K$ in $A$ and $A'$.
It is easy to see that $\pi|_{B}$ induces a \'etale cover $B\rightarrow B'$ of degree $3$.
We have the following  commutative diagram
\begin{eqnarray*}
\xymatrix{
K\times B\ar[rr]^{\pi|_K\times\pi|_{B}}\ar[d]^{\mu}&& K\times B'\ar[d]^{\mu'}\\
A\ar[rr]^{\pi} && A'.}
\end{eqnarray*}
Note that both $\mu$ and $\mu'$ are $(\mathbb Z_2\times \mathbb Z_2)$-covers. We also have $(K\times B, \mu^*L)\simeq (K\times B, L|_K\boxtimes L|_B)$ (see \cite[Corollary 5.3.6]{BL}).
Since $K$ is a log canonical center of $(A, D)$, $K\times b_i$ also a log canonical center of $(K\times B, \mu^*D)$, where $b_i\in K\cap B$ for $1\leq i\leq 4$. Note that $D$ dominates $K$. For two general points $x, y\in K$, let $D_1$ and $D_2$ be the restriction of $D$ in $x\times B$ and $y\times B$. Then $D_1, D_2\in |L|_B|$ are two divisors without common component. Moreover, both $(B, D_1)$ and $(B, D_2)$ have $b_i$ as a log canonical center (see for instance \cite[Theorem 9.5.35]{Laz2}). Thus their local intersection multiplicity $(D_1\cdot D_2)_{b_i}\geq 4$ for $1\leq i\leq 4$ (see for instance \cite[Corollary 6.46]{Kol3}). Then $$12=(L_B^2)\geq \sum_{i=1}^4(D_1\cdot D_2)_{b_i}\geq 16,$$ which is a contradiction.

When each component of $D\sim_{\mathbb Q} L$ is ample (we just need that each component of $D$ is not a pull-back of a divisor on $A/K$), the same argument works.
\end{proof}
\section{Divisors of degree $4$}
 \begin{theo}\label{lc-d=4} Let $(A, L)$ be a polarized abelian variety of dimension $g\geq h^0(L)=4$.
 Let $D\sim_{\mathbb Q}L$ be an effective boundary divisor. Assume that $(A, D)$ is not log canonical, then $\cJ^{+}_D$ defines a translate of an abelian variety $K$ of dimension $g-2$ such that $\chi(L|_K)=2$.
\end{theo}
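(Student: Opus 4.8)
The plan is to mimic the structure of the proof of Theorem \ref{klt-d=3}, now in the setting $d=4$, and to push the numerical bookkeeping until the only surviving configuration is the asserted one. Assume $(A,D)$ is not log canonical, set $c=\mathrm{lct}(A,D)<1$, and let $Z$ be a minimal log canonical center of $(A,cD)$; after a small perturbation we may assume $\cI_Z=\cJ(cD)$ and that $Z$ is normal with rational singularities. By Proposition \ref{components}, $L\otimes\cI_Z$ is IT$^0$; set $\chi=\chi(L\otimes\cI_Z)$. By Corollary \ref{g>d} (first case, with $d=4$) we get $2\le\chi\le d-2=2$, hence $\chi=2$, and by Proposition \ref{pareschi} every component of $Z$ has codimension $\le 2$, while $Z$ has codimension $\ge 2$, so $Z$ is equidimensional of dimension $g-2$. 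As in Theorem \ref{klt-d=3}, a connectedness argument via Lemma \ref{not1} (each connected component would carry $\chi=1$, forcing a reduced point and contradicting $\dim Z=g-2>0$ once $g>2$, the case $g\le 2$ being excluded or trivial) shows $Z$ is connected and equal to a single irreducible component; and $\chi(L|_Z)=2$ by the exact sequence $0\to L\otimes\cI_Z\to L\to L|_Z\to 0$.

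Next I would run the incidence-variety argument. Form $\cJ=\{(a,s)\in A\times|L|\mid s|_{a+Z}=0\}$, let $J$ be the component dominating $A$; since $\chi(L\otimes\cI_Z)=2$, the map $p\colon J\to A$ is a $\mathbb P^1$-bundle generically, so $\dim J=g+1$, and $q\colon J\to|L|=\mathbb P^3$. For $s$ general in $q(J)$ the fiber $J_s$ satisfies $\dim J_s\ge g+1-\dim q(J)$, and every component $J_s^i$ gives $J_s^i+Z\subseteq D_s$; since $\dim J_s^i+\dim Z>\dim D_s$ whenever $\dim J_s^i\ge 1$, Lemma \ref{fibred} produces an abelian subvariety fibering $D_s$, hence $D_s$ reducible. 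The case analysis now splits on $\dim q(J)$: if $\dim q(J)\le 2$ then the generic $D_s$ is reducible, so we land in one of the cases of Lemma \ref{DH-d=4}, and I would eliminate or identify each case exactly as in Theorem \ref{klt-d=3} — in particular, in the cases where $L(-K)$ or the relevant $L_2$-type subline bundle is ample with $\chi=2$ and $Z\subseteq K$ after translation, one derives $L(-K)\subseteq L\otimes\cI_Z$ with equal Euler characteristic $2$, forcing equality, which is then ruled out exactly as before (it contradicts $\cI_Z$ being a proper multiplier ideal, or gives a decomposition contradicting indecomposability, unless we are precisely in the stated conclusion). If $\dim q(J)=3$, i.e. $q$ is dominant, then for general $s$ the divisor $D_s$ is prime of dimension $g-1$, and $\dim J_s=g-2$; since $J_s+Z\subseteq D_s$ with $\dim J_s+\dim Z=2(g-2)>g-1$ for $g>3$, Lemma \ref{fibred} again forces $J_s+Z$ to be fibered by a positive-dimensional abelian subvariety, and since $\dim J_s=\dim Z=\dim(J_s+Z)=g-2$ each component of $J_s$ and $Z$ is a translate of a fixed abelian subvariety $K$ of dimension $g-2$ with $h^0(K,L|_K)=\chi(L|_K)=2$; translating, $Z=K$, which is the conclusion.

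The step I expect to be the main obstacle is the full case-by-case analysis of Lemma \ref{DH-d=4}: unlike in degree $3$, there are four families of reducible divisors, and in the genuinely two-dimensional family (Case 3 of Lemma \ref{DH-d=4}, as analyzed in Remark \ref{dimension-reducible}) the generic reducible divisor is the sum of a \emph{translate of} $K$ with an \emph{irreducible ample} divisor, so the naive "reducible $\Rightarrow$ has a non-ample component of the right type" heuristic needs the refined description from Remark \ref{dimension-reducible} to conclude. I would handle this by invoking that remark to pin down $(A,L_2)$ as indecomposable and the component structure, then re-running the $\chi$-comparison $L_2(-K)\subseteq L\otimes\cI_Z$ with both sides of Euler characteristic $2$; the small-$g$ boundary cases $g=2,3$ (where the dimension count $\dim J_s+\dim Z>\dim D_s$ can fail) must be checked separately, but for $g=2$ the hypothesis forces $Z$ to be zero-dimensional contradicting $\chi(L|_Z)=2$ via Lemma \ref{not1}, and $g=3$ reduces to Theorem \ref{klt-d=3}'s techniques together with an explicit intersection-multiplicity estimate on an abelian surface as in the last paragraph of that proof.
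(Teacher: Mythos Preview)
Your overall strategy --- set up the incidence variety $J\subset A\times|L|$, split on $\dim q(J)$, and in the surjective case use Lemma \ref{fibred} together with $\dim J_s=\dim Z=\dim(J_s+Z)$ to force $Z$ to be a translate of an abelian subvariety --- is exactly the paper's. Two places where your write-up diverges and either creates a gap or extra work:

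\textbf{(i) The setup loses $\cJ^+_D$.} You pass to a minimal lc center of $(A,cD)$ and perturb $D$ so that $\cI_Z=\cJ(cD')$. But the statement is about $\cJ^+_D$ for the \emph{original} $D$; after your argument you only know that some minimal lc center is a translate of $K$, not that $\cJ^+_D=\cI_K$. The paper avoids this by taking $\cI_Z:=\cJ^+_D$ from the start. The same numerics give $\chi(L\otimes\cI_Z)=2$, and reducedness is obtained without perturbation: from $\cJ^+_D\subset\cJ(A,cD)$, both sheaves tensored with $L$ are IT$^0$ of Euler characteristic $2$, hence equal. Then the final identification $Z=K$ \emph{is} the desired conclusion $\cJ^+_D=\cI_K$. (Incidentally, since $L\otimes\cI_Z$ is IT$^0$, $p\colon J\to A$ is a genuine $\mathbb P^1$-bundle, not just generically; and the connectedness/irreducibility discussion you borrow from Theorem \ref{klt-d=3} is unnecessary once $Z$ is already the full non-klt locus.)

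\textbf{(ii) The non-surjective case is simpler than you fear.} You propose to run through all four cases of Lemma \ref{DH-d=4}. The paper first observes that the $\mathbb P^1$-fibers of $p$ genuinely vary in $|L|$ (otherwise a fixed pencil of divisors would contain every translate $a+Z$, hence all of $A$), so $\dim q(J)\ge 2$. By Remark \ref{dimension-reducible} the only two-dimensional family of reducible divisors is Case (3) of Lemma \ref{DH-d=4}, and the single $\chi$-comparison $L(-K)\subset L\otimes\cI_Z$ with equal $\chi=2$ (it is $L(-K)$, not $L_2(-K)$) yields the contradiction. Finally, your small-$g$ worries are moot: the hypothesis $g\ge h^0(L)=4$ already excludes $g=2,3$, and the inequality $2(g-2)>g-1$ holds for all $g\ge 4$.
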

\begin{proof}
 Let $\cI_Z:=\cJ^+_{D}$. Then by assumption, $Z$ is a subscheme of codimension $\geq 2$. By Nadel's vanishing, $L\otimes\cI_Z$ is IT$^0$. Since $(A, L)$ is indecomposable, by Corollary \ref{g>d}, $1\leq \chi(L\otimes\cI_Z)\leq 2$. Moreover, since $Z$ is of codimension $\geq 2$, $\chi(L\otimes\cI_Z)=2$. Hence both $L\otimes\cI_Z$ and $L|_Z$ are IT$^0$ sheaves whose holomorphic Euler characteristics are $2$. By Proposition \ref{pareschi}, we know that $Z$ is of pure dimension $g-2$. Note that $Z$ is connected, otherwise $(A, L)$ is decomposable.

We also claim that $Z$ is reduced. Indeed, let $c=\mathrm{lct}(A, D)<1$, $(A, cD)$ is a log canonical pair and $\cJ^+_D\subset \cJ(A, cD)$. Similarly, $L\otimes \cJ(A, cD)$ is IT$^0$ and $\chi(L\otimes \cJ(A, cD))=2$. Thus $\cJ^+_D=\cJ(A, cD)$ defines a reduced subscheme.

We consider again the subvariety $$J:=\{(a, s)\mid s\mid_{a+Z}=0\}\subset A\times |L|.$$ Note that the first projection $p: J\rightarrow A$ makes $J$ a $\mathbb P^1$-bundle over $A$. It is easy to see that $J=\mathbb P_A(\cV)$, where $\cV$ is the rank $2$ vector bundle $$\varphi_L^*R^0p_{2*}(p_1^*(L\otimes\cI_Z)\otimes \cP),$$ where $\cP$ is the Poincar\'e line bundle on $A\times \Pic^0(A)$, $p_1$ and $p_2$ are the natural projections on $A\times\Pic^0(A)$, and $\varphi_L: A\rightarrow \Pic^0(A)$ is the isogeny induced by $L$.

It is clear that the fibers of $J\rightarrow A$ vary inside $|L|$ and hence $\dim q(J)\geq 2$. We claim that the second projection $q: J\rightarrow |L|\simeq \mathbb P^3$ is surjective. Otherwise, $\dim q(J)=2$ and we see as before that for any $s\in q(J)$, the corresponding divisor $D_s$ is reducible. We have a $2$-dimensional reducible divisors inside $|L|$. Thus by Lemma \ref{DH-d=4} and Remark \ref{dimension-reducible}, we are in Case (3) of Lemma \ref{DH-d=4}. The subvariety $q(J)\subset |L|$ corresponds to the family of divisors
$$\{p^*x+|L-p^*x|\mid\; \text{where}\; x\in E \;\text{and}\; L-p^*x\equiv L_2 \}.$$

Thus for $s\in q(J)$ general, a connected component of the fiber $J_s$ of $J$ over $s$ is a translate  of  the kernel $K$ of $A\rightarrow E$. We also see that $Z$ is contained in a translate of $K$. We may assume that $Z$ is a subscheme of $K$.  Then we have $L(-K) \subset L\otimes \cI_Z$, but $L(-K)$ is ample, $L\otimes \cI_Z$ is M-regular,and $\chi(A, L(-K))=\chi(A, L\otimes \cI_Z)=2$,
we must have $L(-K)=L\otimes \cI_Z$, which is again impossible.% We write $D=aK+D_2$, where no component of $D_2$ coincides with $K$ and $0\leq a\leq 1$. Note that $D_2|_K\sim_{\mathbb Q}L|_K$. Moreover, $h^0(L|_K)=2$ and $(K, L|_K)$ is indecomposable, thus $(K, D_2|_K)$ is log canonical and by inversion of adjunction, $(A, D)$ is also log canonical around $K$, which is a contradiction.

Thus we may assume that $q: J\rightarrow |L|$ is surjective. For $s\in |L|$ general, let $J_s$ be the corresponding fiber of $q$. We have $\dim J_s=g-2$. Note that $J_s+Z\subset D_s$ and $D_s$ is irreducible. Thus $\dim (J_s+Z)=\dim Z=\dim J_s=g-2$. Hence there exists an abelian subvariety $K$ of $A$ of dimension $g-2$ such that $Z$ is  supported on a translate of $K$ and $J_s$ is a union of translates of $K$. After a translation, we may assume that $Z=K$.

%We then show that $Z$ is reduced and hence $Z=K$. Assume the contrary, the kernel  $\cM$ of $\cO_Z\rightarrow \cO_K$ is nonzero.  By Lemma \ref{non-reduced}, $\cM$ contains a subsheaf $F^{m-1}\cM$ which is a quotient of $\cO_K^{\oplus M}$ and $F^{m-1}\cM$ is torsion-free, since $Z$ is pure. Therefore, there exists an injective map $\cO_K\rightarrow F^{m-1}\cM\hookrightarrow \cM$. Note that $L|_Z$ is IT$^0$ and $\chi(L|_Z)=2$. On the other hand, $h^0(L|_K)=\chi(L|_K)\geq 2$ because $(A, L)$ is indecomposable. Thus for any $Q\in \Pic^0(A)$, $H^0(\cM\otimes L\otimes Q)=H^0(L|_Z\otimes Q)=2$ and the restriction map $$r_Z: H^0(L|_Z\otimes Q)\rightarrow H^0(L|_K\otimes Q)$$ is zero. However, $H^0(A, L\otimes Q)\rightarrow H^0(K, L|_K\otimes Q)$ factors through $r_Z$. This shows that the restriction map $H^0(A, L\otimes Q)\rightarrow H^0(K, L|_K\otimes Q)$ is always zero, which is absurd.

\end{proof}

\begin{coro}\label{lc-d=4andample} Let $(A, L)$ be a polarized abelian variety of dimension $g\geq h^0(L)=4$.
 Let $D\sim_{\mathbb Q}L$ be an effective boundary divisor. Assume that each component of $D$ is ample, $(A, D)$ is log canonical.
\end{coro}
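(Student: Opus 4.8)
The plan is to deduce Corollary~\ref{lc-d=4andample} from Theorem~\ref{lc-d=4} by ruling out the only configuration that theorem allows. Suppose for contradiction that $(A,D)$ is not log canonical while every component of $D$ is ample. By Theorem~\ref{lc-d=4}, $\cJ^+_D=\cI_Z$ where $Z$ is a translate of an abelian subvariety $K\subset A$ with $\dim K=g-2$ and $\chi(L|_K)=2$. Since every component of $D$ is ample, Corollary~\ref{non-lc} already forces $D$ to be a boundary divisor, so the non-log-canonicity comes from the combinatorics of several boundary components meeting along $K$ rather than from a single coefficient $>1$; the task is to show this cannot happen.

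The main step is to run the same kind of incidence-variety and intersection-multiplicity argument used at the end of the proof of Theorem~\ref{klt-d=3}, now with $d=4$. First I would fix an isogeny $\pi\colon A\to A'$ onto a PPAV $(A',\Theta)$ (or more precisely onto a decomposable/indecomposable model of degree dividing $4$) with $L\simeq\pi^*L'$, so that $\pi|_K$ embeds $K$ and, writing $B$ (resp.\ $B'$) for the Poincar\'e complement of $K$ in $A$ (resp.\ in $A'$), one gets a $(\mathbb Z_2\times\mathbb Z_2)$-cover $\mu\colon K\times B\to A$ with $(K\times B,\mu^*L)\simeq(K\times B,L|_K\boxtimes L|_B)$ as in \cite[Corollary 5.3.6]{BL}. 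Since $K$ is a log canonical center of $(A,D)$, each $K\times b_i$ (with $b_i\in K\cap B$) is a log canonical center of $(K\times B,\mu^*D)$, hence for two general points $x,y\in K$ the restrictions $D_1\in|L|_B+Q_1|$ and $D_2\in|L|_B+Q_2|$ to $x\times B$ and $y\times B$ are divisors with no common component, each having every $b_i$ as a log canonical center, so $(D_1\cdot D_2)_{b_i}\ge 4$. Then $(L|_B^2)=h^0(L|_B)\cdot 2$-type bookkeeping gives $(L|_B^2)\ge \sum_i (D_1\cdot D_2)_{b_i}$, and one must check the numerics of $\chi(L|_K)=2$, together with $\chi(L)=4$ and Lemma~\ref{exponent}, force $h^0(L|_B)$ (equivalently $(L|_B^{g-2})/(g-2)!$ suitably normalized) to be too small — I expect $(L|_B^2)\le 8$ or $12$ against $\ge 16$, exactly as in the $d=3$ case, yielding the contradiction.

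The hard part will be the last numerical pinch: in the $d=4$ setting the complement $B$ has codimension $g-2$ in $A$, not codimension $2$, so "$(L|_B^2)$" must be replaced by a genuine top self-intersection argument on $B$, and one has to bound $h^0(B,L|_B)$ in terms of $\chi(L)=4$ and $\chi(L|_K)=2$. Here I would use that $L|_K\boxtimes L|_B$ is a polarization on $K\times B$ of degree $h^0(L|_K)\,h^0(L|_B)$ which is a multiple of $\deg L=4$ divided by the degree of $\mu$ — i.e.\ $4\cdot 4/|\mathbb Z_2\times\mathbb Z_2|=4$ — forcing $h^0(L|_B)=2$. Then $B$ carries a polarization of type whose square-volume is $2\cdot(g-2)!$, and slicing $B$ by general members of $|2L|_B|$ down to a surface $S$ on which the images of $D_1,D_2$ still have each of the (finitely many) points over $b_i$ as a log canonical center, we get $(L|_S^2)=h^0(L|_S)$-bounded by a small constant while the local intersection numbers still sum to $\ge 4\cdot 4=16$. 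I expect that making the slicing compatible with preserving the log canonical centers is the only genuinely delicate point; everything else is a transcription of the degree-$3$ argument. Finally, the case where the reducible-divisor analysis of Lemma~\ref{DH-d=4} is invoked is already excluded because every component of $D$ is ample, so no component is pulled back from $A/K$, and the argument "the same argument works" from the proof of Theorem~\ref{klt-d=3} applies verbatim.
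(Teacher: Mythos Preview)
Your high-level strategy---pass to $K\times B$ via the addition isogeny and compare global versus local intersection numbers on $B$---is exactly the paper's. But the execution has two genuine gaps.

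First, a dimension slip: since $\dim K=g-2$, the Poincar\'e complement $B$ is already an abelian \emph{surface}, just as in the $d=3$ case. No slicing is needed, and your paragraph about replacing $(L|_B^2)$ by a ``genuine top self-intersection'' and cutting down to a surface $S$ is based on a misreading of the dimensions.

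Second, and more seriously, the numerics you sketch do not close. The paper does not route through a PPAV here; it takes $\mu\colon B\times K\to A$ directly, with kernel $G=B\cap K$ of some order $|G|$ (not assumed to be $4$). From $h^0(L|_B)\,h^0(L|_K)=h^0(\mu^*L)=|G|\,h^0(L)$ one gets $h^0(L|_B)=2|G|$ (your value $h^0(L|_B)=2$ is wrong), hence $(L|_B^2)=4|G|$. If each $b\in G$ were merely a log canonical center of $(B,D_i)$ you would only obtain $(D_1\cdot D_2)_b\ge 4$ and thus $4|G|\ge 4|G|$---no contradiction, for any $|G|$. The point you are missing is that in this corollary, unlike in Theorem~\ref{klt-d=3}, the pair $(A,D)$ is \emph{not} log canonical: $\cJ^+_D=\cI_K$ forces $K$ into the non-lc locus of $(A,D)$, so after \'etale pullback and restriction each $b\in G$ is a \emph{non-lc} center of $(B,D_i)$, giving the strict inequality $(D_1\cdot D_2)_b>4$. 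Then $4|G|=(D_1\cdot D_2)\ge\sum_{b\in G}(D_1\cdot D_2)_b>4|G|$, which is the contradiction the paper obtains.
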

\begin{proof}Assume the contrary, then, after translation, we may assume that $\cJ_D^+=\cI_K$ by Theorem \ref{lc-d=4}.
By considering the Poincar\'e duality, there exists an abelian surface $B\subset A$ such that the addition map $\mu: B\times K\rightarrow A$ is an isogeny, whose kernel is isomorphic to $G:=K\cap B$ and $h^0(L|_B)=2|G|$. Note that $(B\times K, \mu^*D)$ is not log canonical and $\cJ(\mu^*D)^+=\cI_{K\times G}$.
If each component of $D$ is ample,  for two general points $t_1$ and $t_2\in K$, $D_i:=D|_{t_i\times B}$, $i=1, 2$ are two effective $\mathbb Q$-divisors on $B$ without common component and each points in $G\subset B$ is a non-lc center of $(B, D_i)$. We have again  their local intersection multiplicity $(D_1\cdot D_2)_{b}> 4$ for $b\in G$ (see for instance \cite[Corollary 6.46]{Kol3}). Thus  $$4|G|=(D_1\cdot D_2)\geq \sum_{b\in G}(D_1\cdot D_2)_{b}> 4|G|,$$ which is a contradiction.
\end{proof}


\begin{thebibliography}{28}
%\bibitem[AP]{AP}  Alzati, A.; Pirola, G. P. On the holomorphic length of a complex projective variety. Arch. Math. (Basel) 59 (1992), no. 4, 398--402.
\bibitem[B]{B} Blondeau, Flavio. Singularities of low-degree divisors on abelian varieties, PhD Thesis, Tor Vergata University 2022.
\bibitem[Bar]{Bar} Barja, Miguel A.   Generalized Clifford-Severi inequality and the volume of irregular varieties. Duke Math. J. 164 (2015), no. 3, 541--568.
 \bibitem[BL]{BL} Birkenhake, Christina; Lange, Herbert. Complex abelian varieties. Second edition. Grundlehren der Mathematischen Wissenschaften [Fundamental Principles of Mathematical Sciences], 302. Springer-Verlag,
      Berlin, 2004.
\bibitem[D1]{D1} Debarre, Olivier.  Sur les vari\'et\'es ab\'eliennes dont le diviseur th\^eta est singulier en codimension 3, Duke Math. J. 56 (1988), 221-273.
\bibitem[D2]{D} Debarre, Olivier.  Th\'eor\`emes de connexit\'e et vari\'et\'es ab\'eliennes. Amer. J. Math. 117 (1995), 787--805.
\bibitem[D3]{D3} Debarre, Olivier. Degrees of curves in abelian varieties. Bull. Soc. Math. France 122 (1994), no. 3, 343--361.
%\bibitem[D4]{D4} Debarre, Olivier. Polarisations sur les vari\'et\'es ab\'eliennes produits, C. R. Acad. Sci. Paris 323 (1996), 631--635.
\bibitem[DH]{DH} Debarre, Olivier; Hacon, Christopher D. Singularities of divisors of low degree on abelian varieties. Manuscripta Math. 122 (2007), no. 2, 217--228.
\bibitem[EL]{EL} Ein, Laurence; Lazarsfeld, Robert. Singularities of theta divisors and the birational geometry of irregular varieties.
J. Amer. Math. Soc. 10 (1997), no. 1, 243--258.
\bibitem[F]{F} Fujino, Osamu. Foundations of the minimal model program. MSJ Memoirs, 35. Mathematical Society of Japan, Tokyo, 2017.
 \bibitem[FG]{FG} Fujino, Osamu; Yoshinori, Gongyo. On canonical bundle formulas and subadjunctions. Michigan Math. J. 61 (2012), no. 2, 255--264.
%\bibitem[FH]{FH} Fujino, Osamu; Hashizume, Kenta. Adjunction and inversion of adjunction. preprint (2021).
\bibitem[H1]{H1} Hacon, Christopher. Divisors on principally polarized abelian varieties. Compos. Math. 119 (1999), no.3. 321--329.
\bibitem[H2]{H2} Hacon, Christopher. Fourier transforms, generic vanishing theorems and polarizations of abelian varieties. Math.Z. 235 (2000), 717--726.
\bibitem[H3]{H3} Hacon, Christopher.  A derived category approach to generic vanishing. J. Reine Angew. Math. 575 (2004), 173--187.
\bibitem[Jx]{Jx} Jiang, Xiaodong. On the pluricanonical maps of varieties of intermediate Kodaira dimension. Math. Ann. 356 (2013), no. 3, 979--1004. 
\bibitem[J]{J} Jiang, Zhi. M-regular decompositions for pushforwards of pluricanonical bundles of pairs to abelian varieties. International Mathematics Research Notices, rnaa366, https://doi.org/10.1093/imrn/rnaa366.
\bibitem[JLT]{JLT} Jiang, Zhi; Lahoz, Mart\'i; Tirabassi, Sofia Characterization of products of theta divisors. Compos. Math. 150 (2014), no. 8, 1384--1412.

\bibitem[JP]{JP}  Jiang, Zhi; Pareschi, Giuseppe. Cohomological rank functions on abelian varieties. Ann. Sci. \'Ec. Norm. Sup\'er. (4) 53 (2020), no. 4, 815--846.
 \bibitem[K1]{Kaw} Kawamata, Yujiro. On Fujita's freeness conjecture for 3-folds and 4-folds. Math. Ann. 308 (1997), no. 3, 491--505.
 \bibitem[Kol1]{Kol1} Koll\'ar, J\'anos. Singularities of pairs. Algebraic geometry, Santa Cruz 1995, 221--287, Proc. Sympos. Pure Math., 62, Part 1, Amer. Math. Soc., Providence, RI, 1997.
 \bibitem[Kol2]{Kol2} Koll\'ar, J\'anos. Shafarevich Maps and Automoprhic Forms. Princeton University Press. Princeton. NJ. 1995.
  \bibitem[Kol3]{Kol4}   Koll\'ar, J\'anos. Kodaira's canonical bundle formula and adjunction. Flips for 3-folds and 4-folds, 134--162, Oxford Lecture Ser. Math. Appl., 35, Oxford Univ. Press, Oxford, 2007.

\bibitem[KSC]{Kol3} Koll\'ar, J\'anos; Smith, Karen E.; Corti, Alessio. Rational and Nearly Rational Varieties. Cambridge Studies in Advanced Mathematics, vol. 92 (Cambridge University Press, Cambridge, 2004).


 \bibitem[Laz]{Laz2} Lazarsfeld, Robert. Positivity in algebraic geometry. II. Positivity for vector bundles, and multiplier ideals. Ergebnisse der Mathematik und ihrer Grenzgebiete. 3. Folge. A Series of Modern Surveys in
     Mathematics [Results in Mathematics and Related Areas. 3rd Series. A Series of Modern Surveys in Mathematics], 49. Springer-Verlag, Berlin, 2004.
\bibitem[M]{M} Meng, Fanjun. Pushforwards of klt pairs under morphisms to abelian varieties. Math.Ann. 380, 1655--1685 (2021).
\bibitem[P]{P}  Pareschi, Giuseppe Singularities of divisors of low degree on simple abelian varieties. Int. Math. Res. Not. IMRN 2021, no. 21, 16726--16735.
\bibitem[PP1]{PP1}  Pareschi, Giuseppe; Popa, Mihnea. Strong generic vanishing and a higher-dimensional Castelnuovo-de Franchis inequality. Duke Math. J. 150 (2009), no. 2, 269--285.
\bibitem[PP2]{PP2} Pareschi, Giuseppe; Popa, Mihnea. GV-sheaves, Fourier-Mukai transform, and generic vanishing. Amer. J. Math. 133 (2011), no.1, 235--271.
\bibitem[Sch]{Sch} Schreieder, Stefan. Decomposable Theta Divisors and Generic Vanishing. Int. Math. Res. Not. IMRN 2017, no. 16, 4984--5009.
%\bibitem[S]{S} Shibata, Takahiro. On generic vanishing for pluricanonical bundles. Michigan Math. J. 65 (2016), 873--888.
\bibitem[V]{V} Viehweg, Eckart. Weak positivity and the additivity of the Kodaira dimension for certain fibre spaces. Algebraic varieties and analytic varieties (Tokyo, 1981), 329--C353, Adv. Stud. Pure Math., 1, North-Holland, Amsterdam, 1983.
\bibitem[Zh]{Zh}  Zhang, Tong. Severi inequality for varieties of maximal Albanese dimension. Math. Ann. 359 (2014), no. 3-4, 1097--1114.
\end{thebibliography}
\end{document}